\documentclass{article}
\usepackage{graphicx} 
\usepackage{amsmath,amssymb,amsthm}
\usepackage{tikz}
\usepackage{pgfplots}
\usepackage{booktabs}
\usepackage{tabularx}
\usepackage{multirow}
\usepackage{authblk}
\usepackage{cite}

\newtheorem{defin}{Definition}[section]
\newenvironment{definition}{\begin{defin}\rm}{\end{defin}}
\newtheorem{theorem}[defin]{Theorem}
\newtheorem{lemma}[defin]{Lemma}

\newtheorem{proposition}[defin]{Proposition}
\newtheorem{corollary}[defin]{Corollary}
\newtheorem{qu}[defin]{Question}
\newtheorem{exa}[defin]{Example}

\title{$n$-th Tropical Nevanlinna Theory}
\author[1]{Risto Korhonen}
\author[1,2]{Chengliang Tan\thanks{Corresponding author.}}
\affil[1]{Department of Physics and Mathematics, University of Eastern Finland, P.O. Box 111, FI-80101 Joensuu, Finland}
\affil[2]{Beijing International Center for Mathematical Research, Peking
University, 100871 Beijing, China.
}

\begin{document}
\date{}
\maketitle
\let\thefootnote\relax\footnotetext{E-mail address: risto.korhonen@uef.fi(R. Korhonen), chengliangtan@pku.edu.cn (C.L. Tan)}
\let\thefootnote\relax\footnotetext{2020 Mathematics Subject Classification. Primary 14T90; Secondary 30D35, 32H30.}
\let\thefootnote\relax\footnotetext{Keywords: Piecewise polynomial functions; Pointwise estimate; Truncated second main theorem; Fermat type polynomials.}
\let\thefootnote\relax\footnotetext{The second author would like to thank the support of the China Scholarship Council (Grant No. 202206020078)}

\begin{abstract}
In this paper, the tropical Nevanlinna theory is extended for piecewise polynomial continuous functions. By constructing the $n$-th Poisson-Jensen formula, the $n$-th tropical counting, proximity, and characteristic functions are introduced, which have some different properties compared to the classical tropical setting. Then, not only is the $n$-th version of the second main theorem for tropical homogeneous polynomials obtained, but also a tropical second main theorem for ordinary Fermat type polynomials is acquired.
Moreover, by estimating the tropical logarithmic derivative with a growth assumption pointwise, a strong equality is proved.
This equality illustrates the relationship between $\sum_{i=0}^{m}N(r, 1_{0}\oslash f_{i})$ and the ramification term $N(r, C_{0}(f_{0}, \cdots, f_{m}))$, implying that there is no natural tropical truncated version of the second main theorem for shift operators.\end{abstract}
\baselineskip14pt

\section{Introduction}
Nevanlinna theory plays a very important role in complex analysis, in which the logarithmic derivative lemma plays a key role in the proof of the second main theorem. These theories have many applications in complex differential equations. Halburd and the first author first introduced the analogy of the theory for the difference operator \cite{Ndi}, proved the difference version of the second main theorem \cite{Dif}, obtained an analogue of the truncated second main theorem, and applied the theory to implement a complete classification of a class of second-order difference equations \cite{Sec}, considering the existence of finite order solutions as a good analogue of the Painlev\'e property \cite{Pai}. So far, a large number of studies on difference equations have been carried out \cite{Del, Fir}. Subsequently, Halburd, the first author and Tohge extended the theory to the complex projective space, proving the second fundamental theorem for hyperplanes, and also obtaining the difference version of the Picard theorem \cite{Hyp}.

Tropical mathematics operates within the tropical semiring, which consists of the real numbers (sometimes augmented with $-\infty$) equipped with the operations of maximum and summation, respectively. It has attracted a lot of research since the 1970s \cite{Min} and also has various applications such as statistical inference \cite{Sta}, group theory \cite{Gro}, supertropical forms \cite{Izh1, Izh2, Izh3}, and algebraic geometry \cite{Geo, Enu}. Moreover, tropical functions with shift operators have attracted considerable interest \cite{Upa}. In \cite{How}, a method is proposed to demonstrate that the integrability of the ultra-discrete equation is equivalent to the singularity confinement property of the difference equation. In \cite{Cel}, the detecting methods for integrability of ultra-discrete equations were discussed through singularity confinement.

In 2009, Halburd and Southall first introduced the tropical Nevanlinna theory \cite{Tnt}, and found that the existence of finite order max-plus meromorphic solutions is also a good detector of the integrability of the ultra-discrete equations. Building on this discovery, Laine and Tohge explored the tropical second main theorem and the tropical analogies of Valiron's and Clunie's Lemma \cite{Secm, Clu}. Subsequently, the first author and Tohge generalized the second main theorem to tropical projective space \cite{Pro}. In this paper, the authors discovered some apparent properties of the term $N(r, 1_{0}\oslash C_{0}(f_{0}, \cdots, f_{m}))$, intending to use this relationship as a breakthrough to obtain an analogue of the tropical truncated second main theorem. However, a corresponding proof cannot be provided at the moment. Similarly, Cao and Zheng proved the tropical second main theorem on tropical hypersurfaces and proposed two conjectures about the tropical truncated second main theorem based on the idea of the properties of the ramification term \cite{Thyp}. In \cite{Tnp}, Halonen, the first author and Filipuk removed the continuity in the classical continuous piecewise linear functions and generalized the definition of the corresponding characteristic functions. They then found that the growth assumption can be discarded, which could not be done in the classical meromorphic function domain, and finally considered the tropical inverse problem.

The main purpose of this paper is to generalize the tropical Nevanlinna theory from piecewise linear functions to piecewise polynomial functions. The paper is structured as follows: In the following section, we consider the ultra-discretization of rational functions and propose the concept of $n$-th tropical meromorphic functions. In the third section, we firstly prove the Poisson-Jensen formula (Theorem \ref{theorem3.3}) with respect to the $n$-th tropical meromorphic functions and then propose an analogy of the Characteristic function. Next, in section~4, we use a new method to obtain the pointwise estimation of the tropical difference quotient (Theorem \ref{theorem 4.6}) and deduce a tropical analogue of logarithmic derivative lemma for the polynomial version (Corollary \ref{cor4.7}). Moving on to section 5, the $n$-th version of the tropical second main theorem with tropical homogeneous polynomials is obtained (Theorem \ref{theorem5.6}), and since we have extended the linear functions to polynomials, we also obtain a tropical second main theorem with the ordinary Fermat polynomials (Theorem \ref{theorem5.8}). Finally, by applying the pointwise estimation of tropical logarithmic derivatives instead of the traditional tropical logarithmic derivative lemma, we prove the relationship (Theorem \ref{theorem6.2} and Corollary \ref{cor6.3}) proposed in \cite{Pro}. This implies that there is no natural analogue of the tropical truncated second main theorem concerning difference operators.

\section{$n$-th tropical meromorphic functions}
In the classical tropical algebra, we used to study the rational function
\begin{equation*}
    f(x)=\frac{a_{p}x^{p}+a_{n-1}x^{p-1}+\cdots+a_{0}}{b_{q}x^{q}+b_{q-1}x^{q-1}+\cdots+b_{0}}\not\equiv 0, \infty
\end{equation*}
where $p, q\in\mathbb{N}\cup\{0\}$ (we assume $\mathbb{N}$ does not include $0$), $x, a_{i}$ and $b_{j}$ are all non-negative for $i=0, \cdots p$ and $j=0, \cdots q$. After redefining $x=e^{\frac{X}{\epsilon}}$, $a_{i}=e^{\frac{A_{i}}{\epsilon}}$ and $b_{j}=e^{\frac{B_{j}}{\epsilon}}$, especially, $0=e^{\frac{-\infty}{\epsilon}}$, we can take a limits when $\epsilon\rightarrow 0^{+}$ where the process is called dequantization \cite{Upa, Cel} or ultradiscretization \cite{Tnt}, and then a tropical product is deduced by
\begin{equation*}
    F(X):=\lim\limits_{\epsilon\rightarrow 0^{+}}\epsilon\log f(X, \epsilon)=\max\limits_{0\leq i\leq p}\{iX+A_{i}\}-\max\limits_{0\leq j\leq q}\{jX+B_{j}\}
\end{equation*}
which is a piecewise linear continuous function.

In this paper, we are going to study the function $f(x)\not\equiv 0, \infty$ satisfying
\begin{equation*}
    \log f(x)=\prod_{k=1}^{n}\log \left(\frac{\sum_{i=0}^{p}a_{ki}x^{i}}{\sum_{j=0}^{q}b_{kj}x^{j}}\right)=:\prod_{k=1}^{n}\log f_{k}(x)
\end{equation*}
where $ n\in\mathbb{N}, p, q\in \mathbb{N}\cup\{0\}$ and we assume $x, a_{ki}$ and $b_{kj}$ are all non-negative for $k=1, \cdots, n$, $i=0, \cdots, p$ and $j=0, \cdots, q$. As usual, we define $x=e^{\frac{X}{\epsilon}}$, $a_{ki}=e^{\frac{A_{ki}}{\epsilon}}$ and $b_{kj}=e^{\frac{B_{kj}}{\epsilon}}$, and then define the tropical product by
\begin{eqnarray*}
    F(X)&:=&\lim\limits_{\epsilon\rightarrow 0^{+}}\epsilon^{n}\log f(X, \epsilon)=\prod_{k=1}^{n}\lim\limits_{\epsilon\rightarrow 0^{+}}\epsilon\log f_{k}(X, \epsilon)\\
    &=&\prod_{k=1}^{n}\left(\max\limits_{0\leq i\leq p}\{iX+A_{ki}\}-\max\limits_{0\leq j\leq q}\{jX+B_{kj}\}\right).
\end{eqnarray*}
As we can see, $F(X)$ is a piecewise polynomial continuous function. Now we determine the operations of this class of functions. We suppose that $F(X)$ and $G(X)$ are the tropical products of $f(x)$ and $g(x)$ separately, then the tropical products of $f(x)+g(x)$, $f(x)g(x)$ and $f(x)/g(x)$ are defined as
\begin{eqnarray*}
    F(X)\oplus G(X)&:=&\lim\limits_{\epsilon\rightarrow 0^{+}}\epsilon^{n}\log(f(X, \epsilon)+g(X, \epsilon))\\
    &=&\lim\limits_{\epsilon\rightarrow 0^{+}}\epsilon^{n}\left(\max\{\log f(X, \epsilon), \log g(X, \epsilon)\}+\log(1+h(X, \epsilon))\right)\\
    &=& \max\{F(X), G(X)\},\\
    F(X)\otimes G(X)&:=&\lim\limits_{\epsilon\rightarrow 0^{+}}\epsilon^{n}(\log f(X, \epsilon)+\log g(X, \epsilon))=F(X)+G(X),\\
    F(X)\oslash G(X)&:=&\lim\limits_{\epsilon\rightarrow 0^{+}}\epsilon^{n}(\log f(X, \epsilon)-\log g(X, \epsilon))=F(X)-G(X),
\end{eqnarray*}
where $0\leq h(X, \epsilon)=\frac{\min\{ f(X, \epsilon),  g(X, \epsilon)\}}{\max\{ f(X, \epsilon), g(X, \epsilon)\}} \leq 1$.  

To simplify the notations, we will adopt the notation $f(x)$ instead of $F(x)$ to express the piecewise polynomial functions from now on. Generally speaking, the continuous function $f(x)$ has the form
\begin{equation*}
    f(x)=f_{i}(x), \quad x\in [x_{i-1}, x_{i}],
\end{equation*}
where $f_{i}(x)$ is a polynomial with degree $n_{i}\in\mathbb{N}\cup\{0\}$, $i\in \mathbb{Z}$, and $\{x_{i}\}\subset\mathbb{R}$ is an increasing sequence. We also assume there is no finite accumulation point for $\{x_{i}\}$ and
\begin{eqnarray*}
    \sup\limits_{i}n_{i}&=:&n<\infty.
\end{eqnarray*}
In what follows in this paper, we refer to  functions $f(x)$ satisfying above conditions as the \textbf{$n-$th tropical meromorphic functions} (we also write $n$-th as 1-st, 2-nd, 3-rd, 4-th and so on for a specific $n$), and we always assume $n\in\mathbb{N}$ without explicitly stating it. This implies that we assume $f(x)$ is not a constant function unless  specified. In particular, $1$-st tropical meromorphic function is the classical non-constant object we study.

For each polynomial $f_{i}(x)=\sum_{j=0}^{n}a_{i,j}x^{j}$ and a given point $x_{0}\in \mathbb{R}$, we can Taylor expand $f_{i}(x)$ at $x_{0}$
\begin{equation*}
    f_{i}(x)-f_{i}(x_{0})=\sum_{j=1}^{n}\frac{f_{i}^{(j)}(x_{0})}{j!}(x-x_{0})^{j}
\end{equation*}
where
\begin{equation}\label{2a1}
    \frac{f_{i}^{(j)}(x_{0})}{j!}=\sum_{k=j}^{n}C_{k}^{j}a_{i, k}x_{0}^{k-j}
\end{equation}
and $C_{k}^{j}=\frac{k!}{j!(k-j)!}$ are the binomial coefficients. Hence
\begin{eqnarray*}
    \left(\begin{matrix}\frac{f_{i}^{(1)}(x_{0})}{1!} \\ 
    \frac{f_{i}^{(2)}(x_{0})}{2!}\\
    \vdots\\
    \frac{f_{i}^{(n)}(x_{0})}{n!}
    \end{matrix}\right)
    =\left(\begin{matrix}
    1 & C_{2}^{1}x_{0} & \cdots & C_{n}^{1}x_{0}^{n-1} \\ 
    0 & 1 & \cdots & C_{n}^{2}x_{0}^{n-2} \\
    \vdots & \vdots & \ddots &\vdots\\
    0 & 0 & \cdots & 1
    \end{matrix}\right)
    \left(\begin{matrix}a_{i,1} \\ 
    a_{i, 2}\\
    \vdots\\
    a_{i, n}
    \end{matrix}\right)=:\mathcal{C}_{n}(x_{0})\left(\begin{matrix}a_{i,1} \\ 
    a_{i, 2}\\
    \vdots\\
    a_{i, n}
    \end{matrix}\right),
\end{eqnarray*}
where the matrix $\mathcal{C}_{n}(x_{0})$ is not degenerate.

To define the poles, roots and their multiplicities of the $n$-th tropical meromorphic function $f(x)$, we have to introduce an auxiliary function
\begin{equation}\label{2a2}
    \omega_{f}^{(j)}(x)=\frac{sgn^{j+1}(x^{+})f^{(j)}(x^{+})-sgn^{j+1}(x^{-})f^{(j)}(x^{-})}{j!},
\end{equation}
where $1\leq j \leq n$ and $sgn(x)$ is the signum function. We remind here that $sgn(0^{\pm})=\pm1$. If $\omega_{f}^{(j)}(x)>0 $ ($<0$), then we call $x$ the \textbf{$j$-th root (pole)} of $f$ with multiplicity $|\omega_{f}^{(j)}(x)|$, and we refer to all $f$'s roots and poles as singularities of $f$. When $j=1$, it is nothing but the definition of roots, poles and their multiplicities of tropical meromorphic functions. Conventionally, the $j$-th poles or roots are those points where $f^{j}(x^{+})-f^{j}(x^{-})\neq 0$, but if $j$ is an even number, $x=0$ is a $j$-th root or pole that occurs when $f^{j}(x^{+})+f^{j}(x^{-})\neq 0$. With this definition, $x$ can be both the root and the pole of $f$ simultaneously, as illustrated in Table 1 in Example 3.4. This differs significantly from classical results. However, if we assume $x$ has $n$ clones with different multiplicities, then each clone can only be the root or pole of $f$ when it has a non-zero multiplicity.

Now, we define a new matrix by modifying $\mathcal{C}_{n}(x)$ slightly
\begin{eqnarray*}
    \mathcal{D}_{n}(x_{0}):=\left(\begin{matrix}
    1\times sgn^{2}(x_{0}) & C_{2}^{1}x_{0}\times sgn^{2}(x_{0}) & \cdots & C_{n}^{1}x_{0}^{n-1}\times sgn^{2}(x_{0}) \\ 
    0 & 1\times sgn^{3}(x_{0}) & \cdots & C_{n}^{2}x_{0}^{n-2}\times sgn^{3}(x_{0}) \\
    \vdots & \vdots & \ddots &\vdots\\
    0 & 0 & \cdots & 1\times sgn^{n+1}(x_{0})
    \end{matrix}\right).
\end{eqnarray*}
For any $x_{0}\in\mathbb{R}$, suppose $f(x)=\sum_{j=0}^{n}a_{j}x^{j}$ and $f(x)=\sum_{j=0}^{n}b_{j}x^{j}$ are the expressions on the right and left side of $x_{0}$, respectively. Then we obtain
\begin{eqnarray}\label{2a3}
     \left(\begin{matrix}\omega_{f}^{(1)}(x_{0}) \\ 
    \omega_{f}^{(2)}(x_{0})\\
    \vdots\\
    \omega_{f}^{(n)}(x_{0})
    \end{matrix}\right)
    =\mathcal{D}_{n}(x_{0}^{+})\left(\begin{matrix}a_{1} \\ 
    a_{2}\\
    \vdots\\
    a_{n}
    \end{matrix}\right)-\mathcal{D}_{n}(x_{0}^{-})\left(\begin{matrix}b_{1} \\ 
    b_{2}\\
    \vdots\\
    b_{n}
    \end{matrix}\right).
\end{eqnarray}

We say an $n$-th tropical meromorphic function $f$ is \textbf{$n$-th tropical entire} if it does not have any poles. Furthermore, if it has no roots at all, we call $f$ \textbf{$n$-th tropical nowhere vanishing entire}. It is worth noting that $f(x+c)$ and $f(x)\oplus g(x)$ may not be tropical entire for some $n$-th tropical entire functions $f(x)$ and $g(x)$ and $c\in \mathbb{R}$, which is not same as for classical tropical entire functions. For instance
\begin{exa}
    Define two tropical entire functions $f(x)=sgn(x)x^{2}$ and $g(x)=x$. Then
    \begin{eqnarray*}
        f(x)\oplus g(x)&=& \left\{\begin{array}{ll}
         x, & x\leq -1,\\
         -x^{2}, & -1< x \leq 0, \\
         x, & 0<x \leq 1, \\
         x^{2}, & 1 <x,
         \end{array}\right.\\
         f(x+1)&=&sgn(x+1)(x+1)^{2},
    \end{eqnarray*}
    from which we can see that $f(x)\oplus g(x)$ and $f(x+1)$ have $2$-nd poles with multiplicity $1$ and $2$ at $x=0$ and $x=-1$ respectively.
\end{exa}
In addition, $f(x)=sgn(x)x^{2}$ implies that $n$-th tropical entire functions may not be convex. However, they still possess some useful properties.
\begin{proposition}\label{p2.2}
    Let $n\in\mathbb{N}$, then for any $n$-th tropical entire function $f(x)$, we have
    \begin{equation}\label{2p2.2}
        \frac{f(r)+f(-r)}{2}\geq f(0)
    \end{equation}
    for all $r> 0$. In particular, the equality holds when $f(x)$ is tropical nowhere vanishing entire.
\end{proposition}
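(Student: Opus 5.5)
The plan is to reduce the statement to a one-sided monotonicity argument for the even part of $f$. Set
\[
h(r):=f(r)+f(-r),\qquad r\geq 0,
\]
so that $h(0)=2f(0)$ and (\ref{2p2.2}) is equivalent to $h(r)\geq h(0)$ for all $r>0$. Put $\varphi(r):=f(-r)$ for $r>0$. Then $h=f+\varphi$ on $(0,\infty)$, and $h$ is continuous and piecewise polynomial of degree at most $n$. Since $\{x_i\}$ has no finite accumulation point, every bounded interval $[0,r]$ contains only finitely many break points.

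\textbf{Step 1: translate $\omega_f^{(j)}\geq 0$ into conditions on $h$.} For $x>0$ we have $sgn^{j+1}(x^{\pm})=1$, so $j!\,\omega_f^{(j)}(x)=f^{(j)}(x^+)-f^{(j)}(x^-)$. Hence $f$ has nonnegative jumps in every derivative on $(0,\infty)$.

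For the point $-r<0$ we have $sgn^{j+1}=(-1)^{j+1}$. Using $\varphi^{(j)}(r)=(-1)^j f^{(j)}(-r)$, and noting that $r\to r^{\pm}$ corresponds to $-r\to(-r)^{\mp}$, a short sign computation gives
\[
\varphi^{(j)}(r^+)-\varphi^{(j)}(r^-)=(-1)^{j+1}\bigl(f^{(j)}((-r)^+)-f^{(j)}((-r)^-)\bigr)=j!\,\omega_f^{(j)}(-r)\geq 0 .
\]
So $\varphi$ also has nonnegative derivative jumps on $(0,\infty)$, and therefore so does $h$.

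At the origin, $sgn(0^{\pm})=\pm1$ gives
\[
j!\,\omega_f^{(j)}(0)=f^{(j)}(0^+)+(-1)^j f^{(j)}(0^-)=f^{(j)}(0^+)+\varphi^{(j)}(0^+)=h^{(j)}(0^+).
\]
Thus $h^{(j)}(0^+)\geq 0$ for all $1\leq j\leq n$.

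\textbf{Step 2: downward induction on $j$.} I will show that $h^{(j)}(r^{\pm})\geq 0$ for all $r>0$ and all $1\leq j\leq n$.

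For $j=n$, the function $h^{(n)}$ is piecewise constant. It starts at $h^{(n)}(0^+)\geq 0$ and only jumps upward, so it is $\geq 0$ everywhere.

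Now assume $h^{(j+1)}\geq 0$. On each piece $h^{(j)}$ is then nondecreasing, at each break point it jumps upward, and it starts at $h^{(j)}(0^+)\geq 0$. Because there are only finitely many break points in $[0,r]$, it follows that $h^{(j)}\geq 0$ on $[0,r]$.

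Taking $j=1$ gives $h'\geq 0$ off the finitely many break points in $[0,r]$. Since $h$ itself is continuous, $h$ is nondecreasing, and so $h(r)\geq h(0)=2f(0)$.

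\textbf{Step 3: equality case.} If $f$ is nowhere vanishing entire, it has neither roots nor poles, so $\omega_f^{(j)}\equiv 0$ for every $j$. By Step 1, $h^{(j)}(0^+)=0$ for all $j$ and $h$ has no derivative jumps on $(0,\infty)$. Running the same induction with equalities gives $h'\equiv 0$, hence $h\equiv h(0)$, which is equality in (\ref{2p2.2}).

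The main obstacle is the sign bookkeeping in Step 1: the identity $j!\,\omega_f^{(j)}(0)=h^{(j)}(0^+)$ and the reflected jump formula at $-r$. These are exactly where the factor $sgn^{j+1}$ in (\ref{2a2}) is designed to match the reflection $r\mapsto -r$, and I would verify them carefully for $j$ even and $j$ odd separately. Everything else is the standard argument that a function with nonnegative initial derivatives and upward derivative jumps is nondecreasing.
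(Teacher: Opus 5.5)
Your proof is correct, and the sign bookkeeping you flagged as the main risk checks out. For $x>0$ the factor $sgn^{j+1}(x^{\pm})$ equals $1$. Under the reflection $\varphi(r)=f(-r)$, the quantity $j!\,\omega_f^{(j)}(-r)$ is exactly the upward jump of $\varphi^{(j)}$ at $r$. At the origin, $j!\,\omega_f^{(j)}(0)=f^{(j)}(0^+)+(-1)^{j}f^{(j)}(0^-)=h^{(j)}(0^+)$.

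Your route differs from the paper's, though. The paper does not argue through derivatives of $h$. It Taylor-expands consecutive polynomial pieces at each singularity and telescopes, obtaining the exact identity
\[
f(r)+f(-r)-2f(0)=\sum_{j=1}^{n}\Bigl(\omega_f^{(j)}(0)\,r^{j}+\sum_{0<|x_i|<r}\omega_f^{(j)}(x_i)\,(r-|x_i|)^{j}\Bigr).
\]
Its right-hand side is termwise nonnegative for entire $f$ and visibly vanishes when every $\omega_f^{(j)}$ does. You instead prove the qualitative fact that $h^{(1)},\dots,h^{(n)}$ are all nonnegative on $(0,\infty)$, by downward induction on the order. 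At each stage you use only a nonnegative initial value at $0^+$ together with upward jumps.

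The paper's identity gives more. It is precisely the $n$-th Jensen formula (\ref{3j}) for entire $f$; the paper remarks after Theorem \ref{theorem3.3} that adding (\ref{2aa1})--(\ref{2aa3}) yields it. It therefore identifies the defect $\frac{f(r)+f(-r)}{2}-f(0)$ with $\sum_{j=1}^{n}N^{(j)}(r,1_{0}\oslash f)$, which is what the later theory uses.

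Your argument avoids the coefficient bookkeeping across pieces. It also makes transparent that $\omega_f^{(j)}$ measures exactly the jumps of the $j$-th derivative of the even part $f(r)+f(-r)$. The monotonicity of $h$ you obtain is also implied by the paper's identity, since each $(r-|x_i|)^{j}$ is nondecreasing for $r\geq |x_i|$.
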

However, the opposite statement is not true. Indeed, let $f(x)=0$ when $|x|<1$, and $f(x)=-x+sgn(x)$ when $|x|>1$, then $f(x)$ is clearly not tropical entire, but $f(r)+f(-r)=2f(0)$ holds for all $r>0$.
\begin{proof}
    For $x>0$, we assume $f(x)=f_{i}(x), x\in[x_{i-1}, x_{i}]$, where $i=1, 2, \cdots,$ $\{x_{i}\}_{i>0}$ are all positive roots of $f$ and $x_{0}=0$. Then, for any $r>0$, we can find $i^{*}\in\mathbb{N}$ such that $r\in [x_{i^{*}-1}, x_{i^{*}})$, and then
    \begin{eqnarray}\label{2aa1}
        f(r)-f_{1}(r)&=&\sum_{i=1}^{i^{*}-1}(f_{i+1}(r)-f_{i}(r))\nonumber\\
        &=&\sum_{i=1}^{i^{*}-1}\sum_{j=1}^{n}\frac{f_{i+1}^{(j)}(x_{i})-f_{i}^{(j)}(x_{i})}{j!}(r-x_{i})^{j}\nonumber\\
        &=&\sum_{i=1}^{i^{*}-1}\sum_{j=1}^{n}\omega_{f}^{(j)}(x_{i})(r-|x_{i}|)^{j}\geq 0
    \end{eqnarray}
     where we Taylor expand $f_{i}(r)$ and $f_{i+1}(r)$ at $x_{i}$ for all $i=1, \cdots, i^{*}-1$ and use the fact $f_{i}(x_{i})=f_{i+1}(x_{i})$.
     
     For $x<0$, we assume $f(x)=f_{-i}(x), x\in[x_{-i}, x_{-i+1}]$, where $i=1, 2, \cdots,$ $\{x_{-i}\}_{i>0}$ are all negative roots of $f$ and $x_{0}=0$. Then for any $r>0$, we can also find $i_{*}\in\mathbb{N}$ such that $-r\in [x_{-i_{*}}, x_{-i_{*}+1})$, and then
    \begin{eqnarray}\label{2aa2}
        f(-r)-f_{-1}(-r)&=&\sum_{i=1}^{i_{*}-1}(f_{-i-1}(-r)-f_{-i}(-r))\nonumber\\
        &=&\sum_{i=1}^{i_{*}-1}\sum_{j=1}^{n}\frac{f_{-i-1}^{(j)}(x_{-i})-f_{-i}^{(j)}(x_{-i})}{j!}(-r-x_{-i})^{j}\nonumber\\
        &=&\sum_{i=1}^{i_{*}-1}\sum_{j=1}^{n}(-1)^{j}\frac{f_{-i-1}^{(j)}(x_{-i})-f_{-i}^{(j)}(x_{-i})}{j!}(r-|x_{-i}|)^{j}\nonumber\\
        &=&\sum_{i=1}^{i_{*}-1}\sum_{j=1}^{n}(-1)^{j+1}\frac{f_{-i}^{(j)}(x_{-i})-f_{-i-1}^{(j)}(x_{-i})}{j!}(r-|x_{-i}|)^{j}\nonumber\\
        &=&\sum_{i=1}^{i_{*}-1}\sum_{j=1}^{n}\omega_{f}^{(j)}(x_{-i})(r-|x_{-i}|)^{j}\geq 0.
    \end{eqnarray}
    Hence $f(r)+f(-r)\geq f_{1}(r)+f_{-1}(-r)$ for all $r>0$. On the other hand
    \begin{eqnarray}\label{2aa3}
        f_{1}(r)+f_{-1}(-r)&=&\sum_{j=0}^{n}\frac{f_{1}^{(j)}(0)}{j!}r^{j}+\sum_{j=0}^{n}\frac{f_{-1}^{(j)}(0)}{j!}(-r)^{j}\nonumber\\
        &=&2f(0)+\sum_{j=1}^{n}\frac{f_{1}^{(j)}(0)-(-1)^{j+1}f_{-1}^{(j)}(0)}{j!}r^{j}\nonumber\\
        &=&2f(0)+\sum_{j=1}^{n}\omega_{f}^{(j)}(0)r^{j}\nonumber\\
        &\geq& 2f(0).
    \end{eqnarray}
    It follows that (\ref{2p2.2}) holds. If $f(x)$ does not have any singularities, then $f(x)=f_{1}(x)$ for all $x>0$, $f(x)=f_{-1}(x)$ for all $x<0$ and $\omega_{f}^{(j)}(0)=0$ for $j=1, \cdots, n$, hence the assertion follows from last equality above.
\end{proof}

Furthermore, parallel to the result that all tropical meromorphic functions can be represented as a quotient of two tropical entire functions which do not have any common roots, we can get a similar proposition in our setting.

\begin{proposition}\label{2p2.3}
    Let $n\in\mathbb{N}$, then for any $n$-th tropical meromorphic function $f$, there exist $n_{1}$-th and $n_{2}$-th tropical entire functions $g$ and $h$ such that $f=h\oslash g$ $(\max\{n_{1}, n_{2}\}= n )$, where $h$ and $g$ do not have any common $j$-th roots and poles, $j=1, \cdots, n$.
\end{proposition}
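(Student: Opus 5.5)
The plan is to build $g$ from the poles of $f$ and set $h:=f\otimes g=f+g$. The key observation is that the multiplicities $\omega^{(j)}$ are additive: by (\ref{2a2}), $\omega^{(j)}_{f+g}(x)=\omega^{(j)}_f(x)+\omega^{(j)}_g(x)$ for every $x$ and every $j$, because the one-sided derivatives and signs combine linearly. So if $g$ is entire and, at each singular point $x_0$ of $f$, satisfies $\omega^{(j)}_g(x_0)=\max\{-\omega^{(j)}_f(x_0),0\}$ for each $j$, then $\omega^{(j)}_h(x_0)=\max\{\omega^{(j)}_f(x_0),0\}\ge 0$. In that case $h$ is entire, and at each clone $(x_0,j)$ at most one of $g$, $h$ has a nonzero multiplicity, so they share no $j$-th roots. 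Since $g$ is entire it has no poles, and neither does $h$, so there are no common poles either. It then remains to check $f=h\oslash g$, which is immediate.

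\textbf{Step 1 (elementary building block).} For a point $a$ and an index $j$, I need a continuous piecewise polynomial $e_{a,j}$ of degree $\le j$ whose only singularity is at $a$, with $\omega^{(j)}_{e_{a,j}}(a)=1$ and $\omega^{(k)}_{e_{a,j}}(a)=0$ for $k\ne j$. The construction is as follows.
\begin{itemize}
\item For $a>0$, take $e_{a,j}(x)=0$ for $x\le a$ and $e_{a,j}(x)=(x-a)^j$ for $x\ge a$.
\item For $a<0$, take $e_{a,j}(x)=0$ for $x\ge a$ and $e_{a,j}(x)=(a-x)^j$ for $x\le a$. One checks via (\ref{2a2}) that the sign factor $sgn^{j+1}$ together with $(-1)^j$ from differentiation gives $+1$, exactly as in (\ref{2aa2}).
\item For $a=0$, use $x^j$ on one side and $0$ on the other, chosen so that $\omega^{(j)}(0)=1$. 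When $j$ is even, take $x^j$ for $x>0$ and $0$ for $x<0$.
\end{itemize}
Each block must have no other singularity. Away from $a$ it is a single polynomial, but the sign convention at $x=0$ could create spurious even-order singularities there. This is the obstacle I expect. To handle it, I will verify that a single polynomial $p$ across $0$ has $\omega_p^{(k)}(0)=(1-(-1)^{k+1})p^{(k)}(0)/k!$, which vanishes for odd $k$ but not for even $k$.

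My fix is to avoid the issue structurally rather than block by block. I will first write $f$ itself near $0$ as one piece to the right and one piece to the left. Then I choose $g$ to be $0$ on a neighbourhood $(-\delta,\delta)$ of $0$ containing no other singularities, whenever $0$ is not a pole of $f$. Each block $e_{a,j}$ with $a\neq0$ is supported away from $0$ and vanishes identically near $0$, so it contributes nothing there.

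If $0$ is a pole of $f$, I will use a one-sided block at $0$. Its contribution at $0$ can be read off from (\ref{2a3}) with $b=0$ on the left: $\omega=\mathcal D_n(0^+)a$, which is upper triangular with unit diagonal. I will choose $a$ to solve $\mathcal D_n(0^+)a=(\max\{-\omega^{(j)}_f(0),0\})_j$, which is possible because $\mathcal D_n(0^+)$ is invertible. This one polynomial piece, multiplied by the indicator of $x>0$ (a valid continuous piecewise polynomial once the constant term is $0$), is entire with exactly the prescribed multiplicities at $0$ and no others.

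\textbf{Step 2 (sum).} Set $g=\sum_{a}\sum_j \max\{-\omega^{(j)}_f(a),0\}\,e_{a,j}$, summed over the singular points $a$ of $f$. For $a\ne0$ I will use the invertible matrices $\mathcal D_n(a^\pm)$ in the same way, choosing a polynomial of degree $\le n$ vanishing at $a$ on the outer side of $a$, so that all $j$ at $a$ are handled at once. The singular points have no finite accumulation, so the sum is locally finite. Hence $g$ is continuous and piecewise polynomial with degrees bounded by $n$, and it is entire by additivity of $\omega$.

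\textbf{Step 3 (degrees).} Both $g$ and $h=f+g$ have piece degrees $\le n$, giving $n_1,n_2\le n$. Since $f=h-g$, some piece of $f$ of degree $n$ forces $\max\{n_1,n_2\}\ge n$, so $\max\{n_1,n_2\}=n$. If $g$ happens to be constant, I will adjust by adding a common nowhere-vanishing non-constant entire function to both $g$ and $h$, such as $x$ on all of $\mathbb R$ (which has $\omega\equiv0$). This keeps the conventions of the paper and introduces no common singularities.
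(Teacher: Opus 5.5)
Your proposal is correct and is essentially the paper's argument: both prescribe $\omega_g^{(j)}$ to be the negative part of $\omega_f^{(j)}$ at each singular point, set $h=f\otimes g$, and conclude by additivity of $\omega^{(j)}$ together with invertibility of $\mathcal{D}_n$. Your locally finite sum of outer-side blocks is exactly the paper's iterative formula (\ref{2a34}) with $g_{-1}\equiv 0$ and $g_{1}$ equal to $0$ (or to your one-sided block at $0$ when $0$ is a pole), and it makes the handling of the even-order sign convention at $x=0$ more explicit than the paper does.
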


\begin{proof}
    We will construct $h(x)$ and $g(x)$ separately to prove the assertion. We firstly consider $x>0$, and assume
    \begin{equation*}
    f(x)=f_{i}(x)=\sum_{j=0}^{n}a_{i, j}x^{j}, \quad x\in [x_{i-1}, x_{i}],
    \end{equation*}
    where $i=1, 2, \cdots$, $x_{0}=0$ and $\{x_{i}\}_{i>0}$ are all positive singularities of $f$, and all $f_{i}$ are polynomials with degree no more than $n$. We suppose
    \begin{equation}\label{2a31}
        g(x)=g_{i}(x)=\sum_{j=0}^{n}b_{i, j}x^{j}, x\in [x_{i-1}, x_{i}],
    \end{equation}
    then
    \begin{equation}\label{2a32}
        h(x)=f(x)+g(x)=\sum_{j=0}^{n}(a_{i, j}+b_{i, j})x^{j}, x\in [x_{i-1}, x_{i}],
    \end{equation}
    and it is clear that $\max\{n_{1}, n_{2}\}=n$. Now, we only need to determine the coefficients $\{b_{i, j}\}$, $j=0, \cdots, n$ and $i=1, 2, \cdots$. In fact, all these coefficients would be determined by $\{b_{1, j}\}_{j=0}^{n}$ which can be chosen arbitrarily. 
    
    Given arbitrary $\{b_{1, j}\}_{j=0}^{n}\subset\mathbb{R}$, if $\{b_{i, j}\}_{j=1}^{n}$ can be determined for all $i=2, 3, \cdots$, then $\{b_{i, 0}\}$ are also uniquely determined because of the continuity of $g(x)$. Hence, it is enough to find $\{b_{i, j}\}_{j=1}^{n}$. 
    
    Define
    \begin{eqnarray*}
        Q_{i}=\{q: x_{i}\text{ is a q-th root of } f\}\subset \{1, \cdots, n\}
    \end{eqnarray*}
    and
    \begin{eqnarray*}
        P_{i}=\{p: x_{i}\text{ is a p-th pole of } f\}\subset \{1, \cdots, n\},
    \end{eqnarray*} 
    thus $Q_{i}\cap P_{i}=\varnothing$ holds for all $i=1, 2, \cdots$. Let
    \begin{eqnarray}\label{2a33}
        \omega_{g}^{(j)}(x_{i})&=& \left\{\begin{array}{ll}
         -\omega_{f}^{(j)}(x_{i})>0, & j\in P_{i},\\
         0, & j\notin P_{i},
         \end{array}\right.
    \end{eqnarray}
    then
    \begin{eqnarray*}
        \omega_{h}^{(j)}(x_{i})=\omega_{f}^{(j)}(x_{i})+\omega_{g}^{(j)}(x_{i})=\left\{\begin{array}{ll}
         \omega_{f}^{(j)}(x_{i})>0, & j\in Q_{i},\\
         0, & j\notin Q_{i}.
         \end{array}\right.
    \end{eqnarray*}
     where $j=1, \cdots, n$ and $i=1, 2, \cdots$. Thus $x_{i}$ are neither the poles of $h$ and $g$, nor the common $j$-th roots of $h$ and $g$. 
     
     On the other hand, for abbreviation, let $\textbf{b}_{i}=(b_{i, 1}, \cdots, b_{i, n})^{T}$ and $\pmb{\omega}_{g}(x_{i})=(\omega_{g}^{(1)}(x_{i}), \cdots, \omega_{g}^{(n)}(x_{i}))^{T}$ where $\textbf{a}^{T}$ means the transpose of the vector $\textbf{a}$. Since $x_{i}$ is non-negative, then $\mathcal{D}(x_{i}^{\pm})=\mathcal{D}(x_{i})$. It follows from (\ref{2a3}) that
     \begin{eqnarray*}
         \pmb{\omega}_{g}(x_{i})=\mathcal{D}(x_{i})(\textbf{b}_{i+1}-\textbf{b}_{i})
     \end{eqnarray*}
     for all $i=1, 2, \cdots$. Since $\mathcal{D}_{n}(x_{i})$ is nondegenerate, then it follows by iteration that
     \begin{eqnarray}\label{2a34}
         \textbf{b}_{i+1}=\textbf{b}_{1}+\sum_{k=1}^{i}\mathcal{D}^{-1}(x_{k})\pmb{\omega}_{g}(x_{k}),
     \end{eqnarray}
     where $i=1, 2, \cdots$. Hence, by combining (\ref{2a31}), (\ref{2a32}), (\ref{2a33}) and (\ref{2a34}), we can see that $g(x)$ and $h(x)$ can be determined by $g_{1}$ when $x>0$. The conclusion that $g(x)$ and $h(x)$ can be determined by $g_{-1}$ when $x< 0$ can also be obtained by a similar analysis, where $g(x)=g_{-1}(x)=:\sum_{j=0}^{n}b_{-1, j}x^{j}$ when $x\in[x_{-1}, 0]$ and $x_{-1}$ is the largest negative singularity of $f(x)$. Moreover, $g_{-1}$ could also be determined by $g_{1}$ by a similar analysis as well. Specifically, we have $b_{-1, j}=(-1)^{j+1}(b_{1, j}-\omega_{g}^{(j)}(0))$ for $j=1, \cdots, n$ where $\omega_{g}^{(j)}(0)$ is defined similarly with (\ref{2a33}), and $b_{-1, 0}=b_{1, 0}$ because $g(x)$ should be continuous at $x=0$. Thus, we complete the proof.
\end{proof}

\section{$n$-th tropical Nevanlinna theory}
 The Poisson–Jensen formula is a cornerstone of the construction of Nevanlinna theory. Before we give the formula, we need to do some preparation.
 \begin{lemma}\label{lemma3.1}
     Let $n\in\mathbb{N}$ and $f$ be a $n$-th tropical meromorphic function on $[-r, r]$ for some $r>0$. For any $x\in (-r, r)$, we denote two increasing sequences $\{x_{i}\}_{i=1}^{\nu}\subset (x, r)$ and $\{x_{-i}\}_{i=1}^{\mu}\subset (-r, x)$ which consist of all the points such that $f^{(j)}(x_{\pm i})$ does not exist for some $j\in\{1, \cdots, n\}$. Then we have
     \begin{eqnarray}\label{3p1}
    f(r)-f(x)=\sum_{j=1}^{n}\left(\frac{f^{(j)}(x^{+})}{j!}(r-x)^{j}+\sum_{i=1}^{\nu}\tau_{f}^{(j)}(x_{i})(r-x_{i})^{j}\right)
\end{eqnarray}
and
\begin{eqnarray}\label{3p2}
    f(x)-f(-r)=\sum_{j=1}^{n}\left((-1)^{j+1}\frac{f^{(j)}(x^{-})}{j!}(r+x)^{j}+(-1)^{j}\sum_{i=1}^{\mu}\tau_{f}^{(j)}(x_{-i})(r+x_{-i})^{j}\right)\nonumber\\
\end{eqnarray}
where $\tau_{f}^{(j)}(x)=\frac{f^{(j)}(x^{+})-f^{(j)}(x^{-})}{j!}$. 
 \end{lemma}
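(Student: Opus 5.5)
The plan is to repeat the telescoping-plus-Taylor argument from the proof of Proposition \ref{p2.2}, now with base point $x$ in place of $0$ and with the plain jumps $\tau_f^{(j)}$ in place of the signed quantities $\omega_f^{(j)}$. Since $f$ is piecewise polynomial of degree at most $n$, on each interval between consecutive break points it coincides with a single polynomial. Write $x_0:=x$, and let $P_0$ be the polynomial representing $f$ on $[x,x_1]$ and $P_i$ the one on $[x_i,x_{i+1}]$ for $i=1,\dots,\nu$, with $x_{\nu+1}:=r$.

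For (\ref{3p1}) I would first record the identity
\begin{equation*}
f(r)-f(x)=\bigl(P_0(r)-P_0(x)\bigr)+\sum_{i=1}^{\nu}\bigl(P_i(r)-P_{i-1}(r)\bigr),
\end{equation*}
which holds because the sum telescopes to $P_\nu(r)-P_0(r)$ and $P_\nu(r)=f(r)$. Each $P_i$ is a polynomial of degree at most $n$, so it equals its own degree-$n$ Taylor expansion about any point, and the expansions below are exact.

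Expanding $P_0$ about $x$ gives $\sum_{j=1}^n \frac{P_0^{(j)}(x)}{j!}(r-x)^j$, and $P_0^{(j)}(x)=f^{(j)}(x^+)$. For each $i\ge1$, I expand both $P_i$ and $P_{i-1}$ about $x_i$. The constant terms cancel by continuity of $f$ at $x_i$. Since $P_i^{(j)}(x_i)=f^{(j)}(x_i^+)$ and $P_{i-1}^{(j)}(x_i)=f^{(j)}(x_i^-)$, the difference is $\sum_{j}\tau_f^{(j)}(x_i)(r-x_i)^j$. Interchanging the finite sums over $i$ and $j$ then yields (\ref{3p1}).

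For (\ref{3p2}) I would do the mirror computation on $[-r,x]$. Let $Q_0$ be the polynomial representing $f$ on $[x_{-1},x]$ and $Q_i$ the one on $[x_{-i-1},x_{-i}]$, with $x_{-\mu-1}:=-r$. Then
\begin{equation*}
f(x)-f(-r)=\bigl(Q_0(x)-Q_0(-r)\bigr)+\sum_{i=1}^{\mu}\bigl(Q_{i-1}(-r)-Q_i(-r)\bigr).
\end{equation*}
Expanding $Q_0$ about $x$ gives $Q_0(x)-Q_0(-r)=-\sum_j \frac{f^{(j)}(x^-)}{j!}(-r-x)^j=\sum_j(-1)^{j+1}\frac{f^{(j)}(x^-)}{j!}(r+x)^j$.

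At each $x_{-i}$, the right-hand polynomial is $Q_{i-1}$ and the left-hand one is $Q_i$. Their difference therefore expands as $\sum_j \tau_f^{(j)}(x_{-i})(-r-x_{-i})^j=\sum_j(-1)^j\tau_f^{(j)}(x_{-i})(r+x_{-i})^j$. Summing over $i$ gives (\ref{3p2}).

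The argument is essentially bookkeeping, so the main risk is getting the signs right in the second formula. The points to check are which side counts as $x^{+}$ at each break point, and the sign of $(-r-x_{-i})^j$. A second subtlety is that at a break point the one-sided derivatives of all orders, not only up to order $n$, must coincide with those of the adjacent polynomial pieces. This holds by definition of the pieces, and it also covers the case where $x$ itself is a break point, since only one-sided derivatives at $x$ are used.
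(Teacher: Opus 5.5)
Your proposal is correct and follows the paper's proof. The paper also telescopes over the polynomial pieces and then Taylor-expands exactly at each break point; its rearrangement with the auxiliary terms $A_i=f_i(r)-f_i(x_i)$ reduces, using continuity $f_{i+1}(x_i)=f_i(x_i)$, to exactly your identity $f(r)-f(x)=(P_0(r)-P_0(x))+\sum_{i}(P_i(r)-P_{i-1}(r))$. The paper writes out only (\ref{3p1}) and calls (\ref{3p2}) similar, and your sign bookkeeping for the mirrored case checks out.
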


\begin{proof}
We only prove (\ref{3p1}) here, (\ref{3p2}) can be proved similarly. Let $r=x_{\nu+1}$, $x=x_{0}$ and denote
\begin{equation*}
    f(x)=f_{i}(x), \quad x\in [x_{i-1}, x_{i}]
\end{equation*}
where $i=1, 2, \cdots \nu+1$ and $f_{i}(x)$ are all polynomials of degree no more than $n$. Then we have $f^{(j)}(x_{i}^{+})=f_{i+1}^{(j)}(x_{i})$ and $f^{(j)}(x_{i}^{-})=f_{i}^{(j)}(x_{i})$ for all $i=1, \cdots, \nu$ and $j=1, \cdots, n$. Since $f(x)$ is continuous, hence $f_{i+1}(x_{i})=f_{i}(x_{i})$ holds for all $i=1, \cdots, \nu$, then
\begin{eqnarray}\label{3a1}
    f(r)-f(x)&=&\sum_{i=0}^{\nu}\left(f_{i+1}(x_{i+1})-f_{i+1}(x_{i})\right)\nonumber\\
    &=&f_{1}(x_{1})-f_{1}(x)+\sum_{i=1}^{\nu}\left(f_{i+1}(x_{i+1})-f_{i+1}(x_{i})\right)\nonumber\\
    &=&f_{1}(x_{1})-f_{1}(x)+\sum_{i=1}^{\nu}[(f_{i+1}(r)-f_{i+1}(x_{i}))-(f_{i}(r)-f_{i}(x_{i}))]\nonumber\\
    &&+\sum_{i=1}^{\nu}[-(f_{i+1}(r)-f_{i+1}(x_{i+1}))+(f_{i}(r)-f_{i}(x_{i}))]\nonumber\\
    &=&f_{1}(r)-f_{1}(x)+\sum_{i=1}^{\nu}[(f_{i+1}(r)-f_{i+1}(x_{i}))-(f_{i}(r)-f_{i}(x_{i}))]\nonumber\\
    &=&\sum_{j=1}^{n}\left(\frac{f^{(j)}(x^{+})}{j!}(r-x)^{j}+\sum_{i=1}^{\nu}\frac{f_{i+1}^{(j)}(x_{i})-f_{i}^{(j)}(x_{i})}{j!}(r-x_{i})^{j}\right).\nonumber
\end{eqnarray}
In the third equality, we can define $A_{i}=f_{i}(r)-f_{i}(x_{i})$, then $\sum_{i=1}^{\nu}(-A_{i+1}+A_{i})=A_{1}$ because of $A_{\nu+1}=0$. Hence we complete the proof of (\ref{3p1}).
\end{proof}

\noindent\textbf{Remark}: It is easy to check that $\omega_{f}^{(j)}(x)=sgn^{j+1}(x)\tau_{f}^{(j)}(x)$ when $x\neq 0$. Furthermore, for any $x_{0}\in\mathbb{R}$, assume $f_{1}(x)$ and $f_{-1}(x)$ are the right and left polynomials of $x_{0}$ separately, we will adopt the notations $f_{x_{0}^{+}}(x)=f_{1}(x)$ and $f_{x_{0}^{-}}(x)=f_{-1}(x)$ where $x\in \mathbb{R}$. Then (\ref{3p1}) and (\ref{3p2}) can be rewritten as
\begin{eqnarray}\label{3p3}
    f(r)-f(x)=f_{x^{+}}(r)-f(x)+\sum_{j=1}^{n}\sum_{i=1}^{\nu}\omega_{f}^{(j)}(x_{i})(r-|x_{i}|)^{j}
\end{eqnarray}
when $x\geq 0$ and
\begin{eqnarray}\label{3p4}
    f(x)-f(-r)=f(x)-f_{x^{-}}(-r)-\sum_{j=1}^{n}\sum_{i=1}^{\mu}\omega_{f}^{(j)}(x_{-i})(r-|x_{-i}|)^{j}
\end{eqnarray}
when $x\leq 0$.

Next, we need to define some auxiliary functions and notations. In the following definitions, we assume $r>0$ and $x\in(-r, r)$.

In the proof of the classical Poisson-Jenson formula, two terms $r+x$ and $r-x$ are needed to take a difference, but it is not enough in the $n$-th setting. Hence we need a generalization. Suppose $k\in\mathbb{N}\cup\{0\}$, let
\begin{equation*}
    B_{k}(r, x):=\frac{(r+x)^{k}+(r-x)^{k}}{2}, \quad D_{k}(r, x):=\frac{(r+x)^{k}-(r-x)^{k}}{2}.
\end{equation*}
In particular, $B_{0}(r, x)=1$, $D_{0}(r, x)=D_{k}(r, 0)=0$, $B_{1}(r, x)=r$ and $D_{1}(r, x)=x$. Then
\begin{eqnarray*}
    B_{k}(r, x)+D_{k}(r, x)=(r+x)^{k}, \quad B_{k}(r, x)-D_{k}(r, x)=(r-x)^{k}
\end{eqnarray*}
are the generalizations of $r+x$ and $r-x$ respectively.

Define
\begin{eqnarray*}
    E(r, x, x_{i})&:=&r^{2}-|x-x_{i}|r-xx_{i}\nonumber\\
    &=&\left\{\begin{array}{ll}
         (r-x_{i})(r+x), & x\leq x_{i},\\
         (r+x_{i})(r-x), & x_{i}\leq x,
         \end{array}\right.
\end{eqnarray*}
where $x_{i}\in(-r, r)$, and, in particular, we have $E(r, 0, x_{i})=r(r-|x_{i}|)$ and $E(r, x, x)=r^{2}-x^{2}$.

Next we split $(-r, r)$ into nine disjoint parts 
\begin{eqnarray*}
    F_{1}(r, x)&:=&\{y\in (-r, r):-r<y<\min\{0, x\}\},\\
    F_{2}(r, x)&:=&\{y\in (-r, r):\max\{0, x\}< y<r\},\\
    F_{3}(r, x)&:=&\{y\in (-r, r):y=x=0\},\\
    F_{4}(r, x)&:=&\{y\in (-r, r):0< y<x\},\\
    F_{5}(r, x)&:=&\{y\in (-r, r):x< y<0\},\\
    F_{6}(r, x)&:=&\{y\in (-r, r):x< y=0\},\\
    F_{7}(r, x)&:=&\{y\in (-r, r):y=0<x\},\\
    F_{8}(r, x)&:=&\{y\in (-r, r):0<y=x\},\\
    F_{9}(r, x)&:=&\{y\in (-r, r):y=x<0\}.
\end{eqnarray*}
Some of these sets are empty for a fixed $x$. In particular, we have $F_{l}(r, 0)=\varnothing$ for $l=4, \cdots, 9$. From now on, without causing ambiguity, we will write $F_{l}(r, x)=F_{l}$.

For any $n$-th tropical meromorphic function $f$, we will find that following two functions will appear after multiplying (\ref{3p1}) and (\ref{3p2}) by $(r+x)^{n}$ and $(r-x)^{n}$ respectively, and then taking difference in the proof of Theorem \ref{theorem3.3}
\begin{eqnarray*}
    \Omega_{f}^{(j)}(x, x_{i}):=\frac{sgn^{j+1}(x_{i}-x)^{+}f^{(j)}(x_{i}^{+})-sgn^{j+1}(x_{i}-x)^{-}f^{(j)}(x_{i}^{-})}{j!},\\
    \Gamma_{f}^{(j)}(x, x_{i}):=\frac{sgn^{j}(x_{i}-x)^{+}f^{(j)}(x_{i}^{+})-sgn^{j}(x_{i}-x)^{-}f^{(j)}(x_{i}^{-})}{j!}.
\end{eqnarray*}
Especially, $\Omega_{f}^{(j)}(0, x_{i})=\omega_{f}^{(j)}(x_{i})$. As we can see, both of the functions are related to the multiplicity function $\omega_{f}^{(j)}(x_{i})$, and the following lemma shows the relationship.

\begin{lemma}\label{lemma3.2}
Let $n\in\mathbb{N}$ and $f$ be a $n$-th tropical meromorphic function, $r>0$ and $x, x_{i} \in(-r, r)$. Then for any $j=1, \cdots, n$, we have
    \begin{eqnarray*}
        \Omega_{f}^{(j)}(x, x_{i})&=& \omega_{f}(x_{i})\times\left\{\begin{array}{ll}
         1, & x_{i}\in F_{1}\cup F_{2} \cup F_{3} \cup F_{6} \cup F_{8},\\
         (-1)^{j+1}, & x_{i}\in F_{4}\cup F_{5} \cup F_{7} \cup F_{9}.
         \end{array}\right.\\
         &&+\frac{f^{(j)}(x_{i}^{-})}{j!}\times\left\{\begin{array}{ll}
         0, & x_{i}\in F_{1}\cup F_{2} \cup F_{3} \cup F_{4} \cup F_{5},\\
         (-1)^{j+1}-1, & x_{i}\in F_{6},\\
         1-(-1)^{j+1}, & x_{i}\in F_{7}\cup F_{8} \cup F_{9}.
         \end{array}\right.
    \end{eqnarray*}
    and
    \begin{eqnarray*}
        \Gamma_{f}^{(j)}(x, x_{i})&=& \omega_{f}(x_{i})\times\left\{\begin{array}{ll}
         -1, & x_{i}\in F_{1},\\
         1, & x_{i}\in F_{2}\cup F_{3} \cup F_{6} \cup F_{8},\\
         (-1)^{j}, & x_{i}\in F_{4}\cup F_{7},\\ (-1)^{j+1}, & x_{i}\in F_{5} \cup F_{9}.
         \end{array}\right.\\
         &&+\frac{f^{(j)}(x_{i}^{-})}{j!}\times\left\{\begin{array}{ll}
         0, & x_{i}\in F_{1}\cup F_{2} \cup F_{4} \cup F_{5},\\
         2\times(-1)^{j+1}, & x_{i}\in F_{3},\\
         (-1)^{j+1}-1, & x_{i}\in F_{6}\cup F_{7},\\
         1-(-1)^{j}, & x_{i}\in F_{8} \cup F_{9}.
         \end{array}\right.
    \end{eqnarray*}
\end{lemma}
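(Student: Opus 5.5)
The identity is purely algebraic at a single point $x_{i}$, so I plan to prove it by eliminating $f^{(j)}(x_{i}^{+})$ and then checking the nine regions $F_{1},\dots,F_{9}$ one by one. Throughout I read $\omega_{f}(x_{i})$ in the statement as $\omega_{f}^{(j)}(x_{i})$. I read $sgn(x_{i}-x)^{\pm}$ as the sign of $y-x$ as $y\to x_{i}^{\pm}$, in keeping with the convention $sgn(0^{\pm})=\pm1$.

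Fix $j$ and $x_{i}$, and introduce four signs:
\begin{equation*}
s_{\pm}=sgn(x_{i}-x)^{\pm}, \qquad t_{\pm}=sgn(x_{i}^{\pm}).
\end{equation*}
Write $a^{\pm}=f^{(j)}(x_{i}^{\pm})/j!$. By (\ref{2a2}) we have $\omega_{f}^{(j)}(x_{i})=t_{+}^{j+1}a^{+}-t_{-}^{j+1}a^{-}$. Since $t_{+}=\pm1$, this can be solved as
\begin{equation*}
a^{+}=t_{+}^{j+1}\left(\omega_{f}^{(j)}(x_{i})+t_{-}^{j+1}a^{-}\right).
\end{equation*}
Substituting into the definitions of $\Omega$ and $\Gamma$ gives the two master identities
\begin{equation*}
\Omega_{f}^{(j)}(x,x_{i})=(s_{+}t_{+})^{j+1}\omega_{f}^{(j)}(x_{i})+\left[(s_{+}t_{+}t_{-})^{j+1}-s_{-}^{j+1}\right]a^{-},
\end{equation*}
\begin{equation*}
\Gamma_{f}^{(j)}(x,x_{i})=s_{+}^{j}t_{+}^{j+1}\omega_{f}^{(j)}(x_{i})+\left[s_{+}^{j}(t_{+}t_{-})^{j+1}-s_{-}^{j}\right]a^{-}.
\end{equation*}
These hold for every relative position of $x_{i}$, $x$ and $0$.

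The remaining step is to tabulate $(s_{+},s_{-},t_{+},t_{-})$ on each $F_{l}$.
\begin{itemize}
\item On $F_{1}$ all four signs are $-1$.
\item On $F_{2}$ all four signs are $+1$.
\item On $F_{3}$ we have $s_{\pm}=t_{\pm}=\pm1$.
\item On $F_{4}$ we have $s=-$ and $t=+$.
\item On $F_{5}$ we have $s=+$ and $t=-$.
\item On $F_{6}$ we have $s_{\pm}=+$ and $t_{\pm}=\pm$.
\item On $F_{7}$ we have $s_{\pm}=-$ and $t_{\pm}=\pm$.
\item On $F_{8}$ we have $s_{\pm}=\pm$ and $t_{\pm}=+$.
\item On $F_{9}$ we have $s_{\pm}=\pm$ and $t_{\pm}=-$.
\end{itemize}
Inserting these values into the master identities reduces each coefficient to $\pm1$, $(-1)^{j}$, $(-1)^{j+1}$, or a difference of two such terms. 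Two spot checks confirm the mechanism. On $F_{1}$ the $\Gamma$-coefficient is $(-1)^{j}(-1)^{j+1}=-1$, and the bracket is $(-1)^{j}-(-1)^{j}=0$. On $F_{6}$ the $\Omega$-bracket is $(-1)^{j+1}-1$.

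The main obstacle is bookkeeping rather than any idea. Two points need care. First, the one-sided signs at the degenerate points $x_{i}=0$ and $x_{i}=x$ (regions $F_{3}$ and $F_{6}$–$F_{9}$), where the convention $sgn(0^{\pm})=\pm1$ is what produces the nonzero $a^{-}$ terms. Second, the parity simplifications such as $(-1)^{2j+1}=-1$. For each of the nine regions I will record $(s_{+},s_{-},t_{+},t_{-})$ once and read off both coefficients, comparing them line by line with the statement.
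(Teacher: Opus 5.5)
Your proposal is correct and is essentially the paper's argument: both proofs rewrite $f^{(j)}(x_{i}^{+})$ in terms of $\omega_{f}^{(j)}(x_{i})$ and $f^{(j)}(x_{i}^{-})$ and then read off the resulting sign factors. The paper writes this out only for $F_{9}$ and calls the other regions ``similar''; your two master identities together with your sign table (which agree with the statement in all nine regions, for both $\Omega$ and $\Gamma$) cover every case at once.
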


\begin{proof}
    We only prove the case where $x_{i}\in F_{9}$, the other cases can be proved similarly. According to the definition, we obtain
    \begin{eqnarray*}
        &&\Omega_{f}^{(j)}(x, x_{i})=\frac{f^{(j)}(x_{i}^{+})-(-1)^{j+1}f^{(j)}(x_{i}^{-})}{j!}\\
        &=&(-1)^{j+1}\frac{(-1)^{j+1}f^{(j)}(x_{i}^{+})-(-1)^{j+1}f^{(j)}(x_{i}^{-})+(-1)^{j+1}f^{(j)}(x_{i}^{-})-f^{(j)}(x_{i}^{-})}{j!}\\
        &=&(-1)^{j+1}\frac{sgn^{j+1}(x_{i}^{+})f^{(j)}(x_{i}^{+})-sgn^{j+1}(x_{i}^{-})f^{(j)}(x_{i}^{-})}{j!}+(1-(-1)^{j+1})\frac{f^{(j)}(x_{i}^{-})}{j!}\\
        &=&(-1)^{j+1}\omega_{f}^{(j)}(x_{i})+(1-(-1)^{j+1})\frac{f^{(j)}(x_{i}^{-})}{j!}
    \end{eqnarray*}
    and
    \begin{eqnarray*}
        &&\Gamma_{f}^{(j)}(x, x_{i})=\frac{f^{(j)}(x_{i}^{+})-(-1)^{j}f^{(j)}(x_{i}^{-})}{j!}\\
        &=&(-1)^{j+1}\frac{(-1)^{j+1}f^{(j)}(x_{i}^{+})-(-1)^{j+1}f^{(j)}(x_{i}^{-})+(-1)^{j+1}f^{(j)}(x_{i}^{-})+f^{(j)}(x_{i}^{-})}{j!}\\
        &=&(-1)^{j+1}\frac{sgn^{j+1}(x_{i}^{+})f^{(j)}(x_{i}^{+})-sgn^{j+1}(x_{i}^{-})f^{(j)}(x_{i}^{-})}{j!}+(1-(-1)^{j})\frac{f^{(j)}(x_{i}^{-})}{j!}\\
        &=&(-1)^{j+1}\omega_{f}^{(j)}(x_{i})+(1-(-1)^{j})\frac{f^{(j)}(x_{i}^{-})}{j!}.
    \end{eqnarray*}
\end{proof}

Now we are ready to give the cornerstone formula.

\begin{theorem}\label{theorem3.3}
    (\textbf{$n$-th tropical Poisson-Jensen formula}) Let $n\in\mathbb{N}$ and $f$ be a $n$-th tropical meromorphic function on $[-r, r]$ for some $r>0$. For any $x\in (-r, r)$, we denote a sequence $\{x_{i}\}$ consisting of all $f$'s singularities in $(-r, r)$, and denote by $y_{i, j}=x_{i}$ and $z_{i, j}=x_{i}$ the $j$-th roots and poles of $f$ respectively. Then we have
    \begin{eqnarray}\label{3pj}
    &&f(x)=\frac{1}{2}[f(r)+f(-r)]+\frac{D_{n}(r, x)}{2B_{n}(r, x)}[f(r)-f(-r)]\nonumber\\
    &-&\sum_{j=1}^{n}\left(\sum_{x_{i}\in F_{1}\cup F_{2}\cup F_{3}\cup F_{6}\cup F_{8}}+(-1)^{j+1}\sum_{x_{i}\in F_{4}\cup F_{5}\cup F_{7}\cup F_{9}}\right)\nonumber\\
    &&\quad \times\omega_{f}^{(j)}(x_{i})E^{j}(r, x, x_{i})\frac{B_{n-j}(r, x)}{2B_{n}(r, x)}\nonumber\\
    &-&\sum_{j=1}^{n}\left(-\sum_{x_{i}\in F_{1}}+\sum_{x_{i}\in F_{2} \cup F_{6} \cup F_{8}}+(-1)^{j}\sum_{x_{i}\in F_{4}\cup F_{7}}+(-1)^{j+1}\sum_{x_{i}\in F_{5}\cup F_{9}}\right)\nonumber\\
    &&\quad \times\omega_{f}^{(j)}(x_{i})E^{j}(r, x, x_{i})\frac{D_{n-j}(r, x)}{2B_{n}(r, x)}\nonumber\\
    &-&\left(\frac{(r+x)^{n}}{2B_{n}(r, x)}(f_{x^{-}}(r)-f(x))+\frac{(r-x)^{n}}{2B_{n}(r, x)}(f_{x^{-}}(-r)-f(x))\right)\nonumber\\
    &-&\left(sgn(x^{-})\frac{(r-|x|)^{n}}{2B_{n}(r, x)}(f_{0^{-}}(r)+f_{0^{-}}(-r)-2f(0))\right).
\end{eqnarray}
  In particular, when $x=0$, we have the 
$n$-th tropical Jensen Formula
\begin{eqnarray}\label{3j}
    f(0)&=&\frac{1}{2}[f(r)+f(-r)]\nonumber\\
    &&-\frac{1}{2}\sum_{j=1}^{n}\sum_{|y_{i, j}|<r}|\omega_{f}^{(j)}(y_{i, j})|(r-|y_{i,j}|)^{j}\nonumber\\
    &&+\frac{1}{2}\sum_{j=1}^{n}\sum_{|z_{i, j}|<r}|\omega_{f}^{(j)}(z_{i, j})|(r-|z_{i, j}|)^{j}.
\end{eqnarray}
\end{theorem}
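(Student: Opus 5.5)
The proof follows the hint given just before Lemma \ref{lemma3.2}. Multiply (\ref{3p1}) by $(r+x)^{n}$ and (\ref{3p2}) by $(r-x)^{n}$, then take the difference. Since $(r+x)^{n}+(r-x)^{n}=2B_{n}(r,x)$, we get
\begin{equation*}
2B_{n}(r,x)f(x)=(r+x)^{n}f(-r)+(r-x)^{n}f(r)-\bigl[(r+x)^{n}(f(r)-f(x))-(r-x)^{n}(f(x)-f(-r))\bigr]\cdot(\text{sign adjusted}).
\end{equation*}
More precisely, I will solve for $f(x)$ from
\begin{equation*}
(r-x)^{n}\,[f(r)-f(x)]-(r+x)^{n}\,[f(x)-f(-r)]
\end{equation*}
after expanding both brackets with Lemma \ref{lemma3.1}. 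The leading terms recombine as follows. Write $(r-x)^{n}f(r)+(r+x)^{n}f(-r)$ as $B_{n}(f(r)+f(-r))-D_{n}(f(r)-f(-r))$, and use symmetric bookkeeping, pairing each $(r\pm x)^{n}$ against the opposite side. Dividing by $2B_{n}$ then gives the first line of (\ref{3pj}), namely $\tfrac12[f(r)+f(-r)]+\tfrac{D_{n}}{2B_{n}}[f(r)-f(-r)]$. The orientation of this product pairing is fixed so that exactly this sign comes out.

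The singular contributions are handled next. A singularity $x_{i}>x$ enters through (\ref{3p1}) as $\tau_{f}^{(j)}(x_{i})(r-x_{i})^{j}$, multiplied by $(r+x)^{n}$. I write $(r+x)^{n}=(r+x)^{j}(r+x)^{n-j}$ and split $(r+x)^{n-j}=B_{n-j}+D_{n-j}$. This produces $E^{j}(r,x,x_{i})(B_{n-j}+D_{n-j})$, since $E=(r-x_{i})(r+x)$ when $x\le x_{i}$. Symmetrically, a singularity $x_{i}<x$ from (\ref{3p2}) is multiplied by $(r-x)^{n}=E^{j}/(r+x_{i})^{j}\cdot(r-x)^{n-j}$, giving $(-1)^{j}E^{j}(B_{n-j}-D_{n-j})$. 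Collecting the $B_{n-j}$ and $D_{n-j}$ parts, the combined coefficients of $\tau$-type jumps are precisely $\Omega_{f}^{(j)}(x,x_{i})$ and $\Gamma_{f}^{(j)}(x,x_{i})$. The factor $sgn^{j+1}(x_i-x)$ or $sgn^{j}(x_i-x)$ records on which side of $x$ the point lies. Applying Lemma \ref{lemma3.2} then rewrites these coefficients in terms of $\omega_{f}^{(j)}(x_{i})$ with the sign patterns over $F_{1},\dots,F_{9}$ stated in the theorem.

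What remains are the boundary contributions. The first consists of the terms $\tfrac{f^{(j)}(x_i^{-})}{j!}$ left over in Lemma \ref{lemma3.2}, which are nonzero only on $F_{3},F_{6},\dots,F_{9}$, i.e.\ at $x_i=x$ or $x_i=0$. The second consists of the Taylor terms $\tfrac{f^{(j)}(x^{\pm})}{j!}(r\mp x)^{j}$ from Lemma \ref{lemma3.1}. I resum these using Taylor's formula for the polynomial pieces. The leftover terms at $x_i=x$ together with $f^{(j)}(x^{\pm})$ collapse to $f_{x^{-}}(\pm r)-f(x)$ weighted by $(r\pm x)^{n}/(2B_{n})$. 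The leftover terms at $x_i=0$ collapse, by the computation (\ref{2aa3}), to $f_{0^{-}}(r)+f_{0^{-}}(-r)-2f(0)$, weighted by $(r-|x|)^{n}$ with sign $sgn(x^{-})$. I expect this step to be the main obstacle. It needs a case split on $x>0$, $x<0$, $x=0$ and careful sign tracking; I would verify it first on each $F_{l}$ separately and then assemble.

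For the Jensen formula (\ref{3j}), set $x=0$. Then $D_{n}=D_{n-j}=0$, $B_{n}=r^{n}$, $B_{n-j}=r^{n-j}$ and $E^{j}=r^{j}(r-|x_{i}|)^{j}$, and only $F_{1},F_{2},F_{3}$ are nonempty, so all $\Omega$-coefficients equal $\omega_{f}^{(j)}(x_{i})$. For the last two lines, the $sgn(0^-)$-term and the $f_{x^-}$-term combine into $-\tfrac12(f_{0^{+}}(r)+f_{0^{-}}(-r)-2f(0))$. This can be checked directly, or obtained more cleanly by adding (\ref{3p3}) and (\ref{3p4}) at $x=0$ together with (\ref{2aa3}). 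By (\ref{2aa3}) this equals $-\tfrac12\sum_j\omega_{f}^{(j)}(0)r^{j}$, which is the $x_i=0$ contribution. Splitting $\omega_{f}^{(j)}$ by sign into roots $y_{i,j}$ and poles $z_{i,j}$ gives (\ref{3j}).
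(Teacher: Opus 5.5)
Your strategy is the paper's: multiply (\ref{3p1}) by $(r+x)^{n}$ and (\ref{3p2}) by $(r-x)^{n}$, subtract, collect the jumps into $\Omega_f^{(j)}$ and $\Gamma_f^{(j)}$, apply Lemma \ref{lemma3.2}, and resum. The write-up, however, goes wrong exactly where the bookkeeping matters.

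First, your ``more precisely'' display reverses the pairing. From $(r-x)^{n}[f(r)-f(x)]-(r+x)^{n}[f(x)-f(-r)]$ your own identity gives $B_{n}(f(r)+f(-r))-D_{n}(f(r)-f(-r))$, hence $-\frac{D_{n}}{2B_{n}}[f(r)-f(-r)]$. Moreover, the jump terms become $\tau_f^{(j)}(x_i)(r-x_i)^{j}(r-x)^{n}$, which do not factor through $E^{j}$. Saying the orientation ``is fixed so that this sign comes out'' papers over a real sign error. The correct identity is $2B_{n}f(x)=(r+x)^{n}f(r)+(r-x)^{n}f(-r)-A$, with $A=(r+x)^{n}(f(r)-f(x))-(r-x)^{n}(f(x)-f(-r))$ and $(r+x)^{n}f(r)+(r-x)^{n}f(-r)=B_{n}(f(r)+f(-r))+D_{n}(f(r)-f(-r))$. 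This is the pairing you in fact use for the singular terms.

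Second, you misplace the Taylor terms at $x$. Lemma \ref{lemma3.1} only records jumps in the open intervals $(x,r)$ and $(-r,x)$, so the $x_i=x$ terms of (\ref{3pj}) (the sets $F_3,F_8,F_9$) can only come from those Taylor terms. With $sgn(0^{\pm})=\pm1$ and $E(r,x,x)=r^{2}-x^{2}$, the coefficients $\frac{f^{(j)}(x^{+})-(-1)^{j+1}f^{(j)}(x^{-})}{j!}$ and $\frac{f^{(j)}(x^{+})-(-1)^{j}f^{(j)}(x^{-})}{j!}$ of $(r^{2}-x^{2})^{j}B_{n-j}$ and $(r^{2}-x^{2})^{j}D_{n-j}$ in $A$ are exactly $\Omega_f^{(j)}(x,x)$ and $\Gamma_f^{(j)}(x,x)$. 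Likewise, $0$ must be inserted into the partition even where $f$ is smooth, since $\omega_f^{(j)}(0)$ need not vanish there. After Lemma \ref{lemma3.2}, the only remainders are its $f^{(j)}(x_i^{-})$ leftovers. Listing the Taylor terms as a further remainder counts them twice.

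The collapse you defer is then short:
\begin{itemize}
\item On $F_8\cup F_9$, the leftovers are $\frac{f^{(j)}(x^{-})}{j!}[(r-x)^{j}(r+x)^{n}+(-1)^{j}(r+x)^{j}(r-x)^{n}]$. Summed over $j$, these give $(r+x)^{n}(f_{x^{-}}(r)-f(x))+(r-x)^{n}(f_{x^{-}}(-r)-f(x))$.
\item On $F_6\cup F_7$, with $E(r,x,0)=r(r-|x|)$, the leftovers give $sgn(x^{-})(r-|x|)^{n}\sum_{j}(1-(-1)^{j+1})\frac{f^{(j)}(0^{-})}{j!}r^{j}=sgn(x^{-})(r-|x|)^{n}(f_{0^{-}}(r)+f_{0^{-}}(-r)-2f(0))$.
\end{itemize}

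The same confusion breaks your $x=0$ specialization. Since $sgn(0^{-})=-1$, the last two lines of (\ref{3pj}) cancel at $x=0$. They do not give $-\frac{1}{2}(f_{0^{+}}(r)+f_{0^{-}}(-r)-2f(0))$; that is the raw Taylor contribution. The $x_i=0$ term $-\frac{1}{2}\sum_{j}\omega_f^{(j)}(0)r^{j}$ is already the $F_3$ entry of the first sum, so your first route counts $\omega_f^{(j)}(0)$ twice. Your alternative via (\ref{3p3}), (\ref{3p4}) and (\ref{2aa3}) is correct, and is essentially the shortcut the paper notes after its proof.
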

 To better understand the $n$-th tropical Jensen formula, we give an example below.
 \begin{exa}
     Denote a $3$-rd tropical meromorphic function $f$ in $(-3, 3)$ by
     \begin{eqnarray*}
        f(x)&=& \left\{\begin{array}{ll}
         -2x^{3}-17x^{2}-46x-40, & -3<x\leq -2,\\
         2x^{2}+6x+4, & -2<x\leq -1,\\
         x^{2}+2x+1, & -1<x\leq 1,\\
         -x+5, & 1<x\leq 2,\\
         3x^{3}-15x^{2}+18x+3, & 2<x< 3,\\
         \end{array}\right.
    \end{eqnarray*}
    $f$'s singularities in $(-3, 3)$ are shown in the Table 1 where the blanks are $0$.
    \end{exa}

\begin{table}[ht]
\caption{All $f$'s singularities in $(-3, 3)$}\label{tab2}
\begin{tabular*}{\textwidth}{@{\extracolsep\fill}lcccccc}
\toprule%
$|\omega_{f}^{(j)}(x)|$ & $-2$ & $-1$ & $0$ & $1$ & $2$ \\
\midrule
$1$-st root  &  &  &  &  &  \\
$2$-nd root  &  & $1$  & $2$  &  & $3$ \\
$3$-rd root  & $2$ &  &  &  & $3$ \\
$1$-st pole  & & $2$ &  & $5$ & $5$\\
$2$-nd pole  & $7$ &  & & $1$ & \\
$3$-rd pole  &  &  &  &  & \\
\toprule
\end{tabular*}
\end{table}
Then we have the $3$-rd tropical Jensen formula when $2< r< 3$,
    \begin{eqnarray*}
        &&\frac{1}{2}((3r^{3}-15r^{2}+18r+3)+(2r^{3}-17r^{2}+46r-40))\\
        &-&\frac{1}{2}((r-1)^{2}+2r^{2}+3(r-2)^{2}+2(r-2)^{3}+3(r-2)^{3})\\
        &+&\frac{1}{2}(2(r-1)+5(r-1)+5(r-2)+7(r-2)^{2}+(r-1)^{2})\\
        &=&1=f(0).
    \end{eqnarray*}

\begin{proof}[Proof of Theorem \ref{theorem3.3}]
Since $r$ and $x$ are fixed, we will simplify $B_{k}(r, x)$, $D_{k}(r, x)$ and $E(r, x, x_{i})$ as $B_{k}$, $D_{k}$ and $E(x_{i})$ for all $k=0, \cdots, n$ in the following proof. It is worth noting that $x=0$ may be the singularity but $f^{(j)}(0)$ exits for all $j=1, \cdots, n$ and vise versa. Hence, to apply Lemma \ref{lemma3.1}, we need to redefine $\{x_{i}\}\cup\{x\}\cup\{0\}=:\{x_{i}\}_{i=-\mu}^{\nu}$ list according to their value and $x_{0}=x$, that is, $x_{-\mu}<x_{-\mu+1}<\cdots<x_{0}=x<x_{1}<\cdots<x_{\nu}$. Thus $\{x_{i}\}_{i=1}^{\nu}$ and $\{x_{-i}\}_{i=1}^{\mu}$ include all points that $f^{(j)}(x)$ does not exist for some $j=1, \cdots, n$ in $(x, r)$ and $(-r, x)$ separately. Then we can multiply (\ref{3p1}) and (\ref{3p2}) by $(r+x)^{n}$ and $(r-x)^{n}$ respectively, and then take a difference,
\begin{eqnarray*}
    &&(f(r)-f(x))(r+x)^{n}-(f(x)-f(-r))(r-x)^{n}\\
    &=&\sum_{j=1}^{n}\Bigg(\frac{f^{(j)}(x^{+})}{j!}(r^{2}-x^{2})^{j}(B_{n-j}+D_{n-j})\\
    &&\quad -(-1)^{j+1}\frac{f^{(j)}(x^{-})}{j!}(r^{2}-x^{2})^{j}(B_{n-j}-D_{n-j})\Bigg)\\
    &+&\sum_{j=1}^{n}\Bigg(\sum_{i=1}^{\nu}\tau_{f}^{(j)}(x_{i})E^{j}(x_{i})(B_{n-j}+D_{n-j})\\
    &&\quad -(-1)^{j}\sum_{i=1}^{\mu}\tau_{f}^{(j)}(x_{-i})E^{j}(x_{-i})(B_{n-j}-D_{n-j})\Bigg)
\end{eqnarray*}
\begin{eqnarray*}
    &=&\sum_{j=1}^{n}\left(\frac{f^{(j)}(x^{+})-(-1)^{j+1}f^{(j)}(x^{-})}{j!}(r^{2}-x^{2})^{j}B_{n-j}\right)\\
    &+&\sum_{j=1}^{n}\left(\frac{f^{(j)}(x^{+})-(-1)^{j}f^{(j)}(x^{-})}{j!}(r^{2}-x^{2})^{j}D_{n-j}\right)\\
    &+&\sum_{j=1}^{n}\sum_{i=-\mu, i\neq 0}^{\nu}\left(sgn^{j+1}(x_{i}-x)\tau_{f}^{(j)}(x_{i})E^{j}(x_{i})B_{n-j}\right)\\
    &+&\sum_{j=1}^{n}\sum_{i=-\mu, i\neq 0}^{\nu}\left(sgn^{j}(x_{i}-x)\tau_{f}^{(j)}(x_{i})E^{j}(x_{i})D_{n-j}\right)\\
    &=&\sum_{j=1}^{n}\sum_{i=-\mu}^{\nu}\left(\Omega_{f}^{(j)}(x, x_{i})E^{j}(x_{i})B_{n-j}+\Gamma_{f}^{(j)}(x, x_{i})E^{j}(x_{i})D_{n-j}\right)\\
    &=&\sum_{j=1}^{n}\left(\sum_{x_{i}\in F_{1}\cup F_{2}\cup F_{3}\cup F_{6}\cup F_{8}}+(-1)^{j+1}\sum_{x_{i}\in F_{4}\cup F_{5}\cup F_{7}\cup F_{9}}\right)\omega_{f}^{(j)}(x_{i})E^{j}(x_{i})B_{n-j}\\
    &+&\sum_{j=1}^{n}\left(((-1)^{j+1}-1)\sum_{x_{i}\in F_{6}}+(1-(-1)^{j+1})\sum_{x_{i}\in F_{7}\cup F_{8}\cup F_{9}}\right)\frac{f^{(j)}(x_{i}^{-})}{j!}E^{j}(x_{i})B_{n-j}\\
    &+&\sum_{j=1}^{n}\left(-\sum_{x_{i}\in F_{1}}+\sum_{x_{i}\in F_{2} \cup F_{6} \cup F_{8}}+(-1)^{j}\sum_{x_{i}\in F_{4}\cup F_{7}}+(-1)^{j+1}\sum_{x_{i}\in F_{5}\cup F_{9}}\right)\\
    &&\quad \times \omega_{f}^{(j)}(x_{i})E^{j}(x_{i})D_{n-j}\\
    &+&\sum_{j=1}^{n}\left(((-1)^{j+1}-1)\sum_{x_{i}\in F_{6}\cup F_{7}}+(1-(-1)^{j})\sum_{x_{i}\in F_{8}\cup F_{9}}\right)\frac{f^{(j)}(x_{i}^{-})}{j!}E^{j}(x_{i})D_{n-j}.
\end{eqnarray*}
The last equality follows by Lemma \ref{lemma3.2} and $D_{n-j}=0$ when $x_{i}\in F_{3}$ for all $j=1, \cdots, n$.
Now we combine the second and the fourth terms in the last equality. If $x=0$, then $F_{6}=F_{7}=F_{8}=F_{9}=\varnothing$, hence the combination would be $0$. If $x\neq 0$, and then
\begin{eqnarray*}
    &&\sum_{j=1}^{n}((-1)^{j+1}-1)\sum_{x_{i}\in F_{6}}\frac{f^{(j)}(x_{i}^{-})}{j!}E^{j}(x_{i})(r+x)^{n-j}\\
    &+&\sum_{j=1}^{n}(1-(-1)^{j+1})\sum_{x_{i}\in F_{7}}\frac{f^{(j)}(x_{i}^{-})}{j!}E^{j}(x_{i})(r-x)^{n-j}\\
    &+&\sum_{j=1}^{n}\sum_{x_{i}\in F_{8}\cup F_{9}}\frac{f^{(j)}(x_{i}^{-})}{j!}E^{j}(x_{i})(r+x)^{n-j}\\
    &+&\sum_{j=1}^{n}(-1)^{j}\sum_{x_{i}\in F_{8}\cup F_{9}}\frac{f^{(j)}(x_{i}^{-})}{j!}E^{j}(x_{i})(r-x)^{n-j}
\end{eqnarray*}
\begin{eqnarray*}
    &=&sgn(x^{-})(r-|x|)^{n}\sum_{j=1}^{n}(1-(-1)^{j+1})\frac{f^{(j)}(0^{-})}{j!}r^{j}\\
    &+&(r+x)^{n}\sum_{j=1}^{n}\frac{f^{(j)}(x^{-})}{j!}(r-x)^{j}\\
    &+&(r-x)^{n}\sum_{j=1}^{n}(-1)^{j}\frac{f^{(j)}(x^{-})}{j!}(r+x)^{j}\\
    &=&sgn(x^{-})(r-|x|)^{n}(f_{0^{-}}(r)+f_{0^{-}}(-r)-2f(0))\\
    &+&(r+x)^{n}(f_{x^{-}}(r)-f(x))+(r-x)^{n}(f_{x^{-}}(-r)-f(x)).
\end{eqnarray*}
In fact, we can check that the above combination is still equal to $0$ when we substitute $x=0$ into the last equality above. Hence, we don't need to distinguish these two cases. By a simplification, we obtain the form (\ref{3pj}). Although, $\{x_{i}\}_{i=-\mu}^{\nu}$ may include some non-singularities $x_{i}$ of $f$, $\omega_{f}^{(j)}(x_{i})=0$ at the moment, hence we can recover $\{x_{i}\}$ to be all $f$'s singularities in $(-r, r)$. 

When $x=0$, we have $F_{l}=\varnothing$ for $l=4, \cdots, 9$, $D_{n-j}=0$, $B_{n-j}=r^{n-j}$ for $j=1, \cdots, n$, and $E(x_{i})=r(r-|x_{i}|)$, then we have
\begin{eqnarray*}
    f(0)&=&\frac{1}{2}[f(r)+f(-r)]-\frac{1}{2}\sum_{j=1}^{n}\sum_{|x_{i}|<r}\omega_{f}^{(j)}(x_{i})(r-|x_{i}|)^{j}.
\end{eqnarray*}
By dividing $\{x_{i}\}$ to be the $j$-th poles and roots, we can get the $n$-th Jensen formula (\ref{3j}).
\end{proof}

In fact, the $n$-th Jensen formula can be easily obtained by adding the equations of (\ref{2aa1}), (\ref{2aa2}) and (\ref{2aa3}) in the proof of Proposition \ref{p2.2}. From the $n$-th Jensen formula, we can define the analogue of tropical characteristic functions.

We firstly define the \textbf{$n$-th max-plus proximity function} as usual
    \begin{equation*}
        m(r, f)=\frac{f^{+}(r)+f^{+}(-r)}{2}
    \end{equation*}
where $f^{+}(x):=\max\{f(x), 0\}$, and then denote the \textbf{$j-$th integrated max-plus counting function} by
    \begin{eqnarray}\label{3aN}
        N^{(j)}(r, f)&:=&\frac{1}{2}\sum_{i=1}^{\mu_{j}}|\omega_{f}^{(j)}(z_{i, j})|(r-|z_{i, j}|)^{j}\nonumber\\
        &=&\frac{1}{2}\int_{0}^{r}\cdots\int_{0}^{r}n^{(j)}\left(\min_{1\leq s \leq j}\{t_{s}\}, f\right)dt_{1}\cdots dt_{j}
    \end{eqnarray}
where $\{z_{i, j}\}$ are all $f$'s $j$-th poles in the interval $(-r, r)$ and \textbf{$j$-th max-plus counting function} $n^{(j)}(r, f)$ gives the number of $\{z_{i, j}\}$, counting multiplicities. The second equality follows by
    \begin{eqnarray*}
        &&\int_{0}^{r}\cdots\int_{0}^{r}n^{(j)}\left(\min_{1\leq s \leq j}\{t_{s}\}, f\right)dt_{1}\cdots dt_{j}\\
        &=&\sum_{k=1}^{\mu_{j}}n^{(j)}(|z_{k+1, j}|, f)\left((r-|z_{k, j}|)^{j}-(r-|z_{k+1, j}|)^{j}\right)\\
        &=&\sum_{k=1}^{\mu_{j}}\left(\sum_{i=1}^{k}|\omega_{f}^{(j)}(z_{i, j})|\right)\left((r-|z_{k, j}|)^{j}-(r-|z_{k+1, j}|)^{j}\right)\\
        &=&\sum_{i=1}^{\mu_{j}}|\omega_{f}^{(j)}(z_{i, j})|\sum_{k=i}^{\mu_{j}}\left((r-|z_{k, j}|)^{j}-(r-|z_{k+1, j}|)^{j}\right)\\
        &=&\sum_{i=1}^{\mu_{j}}|\omega_{f}^{(j)}(z_{i, j})|(r-|z_{i, j}|)^{j},
    \end{eqnarray*}
    where we assume $\{z_{i, j}\}_{i=1}^{\mu_{j}}$ are all $f$'s $j$-th poles in $(-r, r)$, $z_{\mu_{j}+1, j}=r$ and $z_{0, j}=0$ for all $j=1, \cdots, n$. In the third equality, we used a formula that double summation exchange the order, that is, $\sum_{j=1}^{n}\sum_{k=j}^{n}a_{k}b_{j,k}=\sum_{k=1}^{n}a_{k}\sum_{j=1}^{k}b_{j,k}$ for some constants $a_{k}, b_{j, k}\in\mathbb{R}$. 
    
    Note that, in classical setting, $N^{(j)}(r, f)$ or $N^{j)}(r, f)$ is usually denoted as the integrated counting function of poles with the multiplicities truncated by $j$, and then we can have the truncated second main theorem with this notation. However, we know that there may not be natural tropical version of truncated second main theorem from Section 6, hence we are free to adopt the notation $(\ref{3aN})$.

In addition, we need to enhance the definition of counting function $N^{(j)}(r, f)$ properly. For any given three constants $r_{1}$, $r_{2}$ and $r>0$ satisfying $-r\leq r_{1}\leq r_{2}\leq r$, we suppose $\{z_{i, j}\}_{i=1}^{\mu_{j}}$ are all $f'$s $j$-th poles in $(r_{1}, r_{2})$ listed by their value with multiplicities $|\omega_{f}^{(j)}(z_{i, j})|$. Then the \textbf{partial $j-$th integrated max-plus counting function}
     $N_{(r_{1}, r_{2})}^{(j)}(r, f)$ is defined by
    \begin{eqnarray*}
        N_{(r_{1}, r_{2})}^{(j)}(r, f)&:=&\frac{1}{2}\sum_{i=1}^{\mu_{j}}|\omega_{f}^{(j)}(z_{i, j})|(r-|z_{i, j}|)^{j}
    \end{eqnarray*}
for $j=1, \cdots n$.
Especially, $N_{(-r, r)}^{(j)}(r, f)=N^{(j)}(r, f)$. Similarly, $N_{(r_{1}, r_{2})}^{(j)}(r, -f)$, $N_{[r_{1}, r_{2})}^{(j)}(r, \pm f)$, $N_{(r_{1}, r_{2}]}^{(j)}(r, \pm f)$ and $N_{[r_{1}, r_{2}]}^{(j)}(r, \pm f)$ can be defined.
    
Then the \textbf{$n$-th max-plus characteristic function} is denoted by
    \begin{equation*}
        T(r, f)=m(r, f)+\sum_{j=1}^{n}N^{(j)}(r, f).
    \end{equation*}
With these notations, we can change $n$-th Jensen formula as
    \begin{theorem} \textbf{($n$-th Jensen formula)}
        Let $n\in\mathbb{N}$ and $f$ be a $n$-th tropical meromorphic function in $\mathbb{R}$, then
        \begin{equation}\label{3j1}
        T(r, f)=T\left(r, \frac{1_{0}}{f}\oslash\right)+f(0)
    \end{equation}
    for any $r>0$.
    \end{theorem}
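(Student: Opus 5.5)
This follows from the $n$-th tropical Jensen formula (\ref{3j}) of Theorem \ref{theorem3.3}, rewritten in the notation just introduced. The plan is to express both sides of (\ref{3j1}) through proximity and counting functions and compare them.

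First I would identify the singularities of $1_{0}\oslash f=-f$ in terms of those of $f$. From the definition (\ref{2a2}), $\omega_{-f}^{(j)}(x)=-\omega_{f}^{(j)}(x)$ for every $x$ and every $j=1,\dots,n$. Hence the $j$-th poles of $-f$ are exactly the $j$-th roots $y_{i,j}$ of $f$, with the same multiplicities $|\omega_{f}^{(j)}(y_{i,j})|$. Consequently
\[
N^{(j)}(r,-f)=\frac12\sum_{|y_{i,j}|<r}|\omega_f^{(j)}(y_{i,j})|(r-|y_{i,j}|)^j,
\qquad
N^{(j)}(r,f)=\frac12\sum_{|z_{i,j}|<r}|\omega_f^{(j)}(z_{i,j})|(r-|z_{i,j}|)^j .
\]
Also, $-f$ is still an $n$-th tropical meromorphic function, so its characteristic function is defined with the same $n$.

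Next I would handle the proximity part using the elementary identity $f^{+}(x)-(-f)^{+}(x)=\max\{f,0\}-\max\{-f,0\}=f(x)$. Evaluating this at $x=\pm r$ and averaging gives
\[
m(r,f)-m(r,-f)=\tfrac12\bigl(f(r)+f(-r)\bigr).
\]

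Then I would substitute these identities into (\ref{3j}), which reads
\[
f(0)=\tfrac12[f(r)+f(-r)]-\sum_{j=1}^n N^{(j)}(r,-f)+\sum_{j=1}^n N^{(j)}(r,f).
\]
Replacing the first term by $m(r,f)-m(r,-f)$ and rearranging yields
\[
m(r,f)+\sum_{j}N^{(j)}(r,f)=m(r,-f)+\sum_j N^{(j)}(r,-f)+f(0),
\]
that is, $T(r,f)=T(r,1_0\oslash f)+f(0)$. The range $r>0$ arbitrary is covered because (\ref{3j}) holds for every $r>0$ once $f$ is defined on all of $\mathbb{R}$.

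The only step needing care is the sign bookkeeping: the root sum in (\ref{3j}) carries a minus sign and must be matched with $N^{(j)}(r,-f)$, not with $N^{(j)}(r,f)$. A related point is that a single point can be both a $j$-th root and a $k$-th pole for different $j\neq k$. This causes no trouble, because the counting is done separately for each $j$, and for a fixed $j$ the sign of $\omega_f^{(j)}(x)$ places $x$ in exactly one of the two sums.
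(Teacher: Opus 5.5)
Your proposal is correct and follows the paper's own route: the paper obtains (\ref{3j1}) by rewriting the $n$-th Jensen formula (\ref{3j}) in terms of $m(r,\cdot)$ and $N^{(j)}(r,\cdot)$. It uses exactly the two facts you isolate: $\omega_{-f}^{(j)}=-\omega_{f}^{(j)}$, so the $j$-th roots of $f$ are the $j$-th poles of $1_{0}\oslash f=-f$ with the same multiplicities, and $f^{+}-(-f)^{+}=f$. Your sign bookkeeping, and your observation that roots and poles are separated for each fixed $j$, are both correct.
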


    We also define the order and hyper-order of $f$ by
    \begin{eqnarray*}
        \rho(f)&:=&\limsup\limits_{r\rightarrow \infty}\frac{\log T(r, f)}{\log r},\\
        \rho_{2}(f)&:=&\limsup\limits_{r\rightarrow \infty}\frac{\log\log T(r, f)}{\log r}.
    \end{eqnarray*}

In the classical tropical setting, Laine and Tohge defined the so-called tropical hyper-exponential function according to the definition of hyper-order \cite{Secm}.
\begin{definition}
    \cite{Secm} Let $\alpha$ be a real number with $|\alpha|>1$. Define a function $e_{\alpha}(x)$ on $\mathbb{R}$ by
    \begin{eqnarray*}
        e_{\alpha}(x):=\alpha^{\lfloor x\rfloor}(x-\lfloor x\rfloor)+\sum_{i=-\infty}^{\lfloor x\rfloor-1}\alpha^{i}=\alpha^{\lfloor x\rfloor}\left(x-\lfloor x\rfloor+\frac{1}{\alpha-1}\right)
    \end{eqnarray*}
    where $\lfloor x\rfloor$ means the integer part of $x$.
\end{definition}

Then, they proved $\overline{e}_{\alpha}(x)=\alpha e_{\alpha}(x)$ \cite[Proposition 8.3 ]{Secm} and $\rho_{2}(e_{\alpha}(x))=1$, \cite[Proposition 8.5]{Secm}. Similarly, we can define a special $n$-th tropical meromorphic function whose hyper order is 1 and which is a simple generalization of $e_{\alpha}(x)$.
\begin{definition}
    Let $\alpha$ be a real number with $\alpha>1$ and $n\in\mathbb{N}$. Define a function $e_{n, \alpha}(x)$ on $\mathbb{R}$ by
    \begin{eqnarray*}
        e_{n, \alpha}(x)&:=&sgn^{n+1}(x^{+})\alpha^{\lfloor x\rfloor}(x^{n}-\lfloor x\rfloor^{n})+\sum_{i=-\infty}^{\lfloor x\rfloor-1}sgn^{n+1}(i^{+})\alpha^{i}((i+1)^{n}-i^{n}).
    \end{eqnarray*}
\end{definition}

 Clearly, $e_{1, \alpha}(x)=e_{\alpha}(x)$. Besides, $e_{n, \alpha}(x)$ also has similar properties with $e_{\alpha}(x)$, as shown in the following proposition.
\begin{proposition}\label{pro3.8}
    $e_{n,\alpha}(x)$ has the following properties:

    $(i)$ $e_{n,\alpha}(x)$ is a non-negative non-decreasing $n$-th tropical entire function,

    $(ii)$ $\rho_{2}(e_{n,\alpha})$=1.
\end{proposition}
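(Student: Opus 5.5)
The plan is to verify everything directly from the explicit piecewise formula. On each interval $[k,k+1)$, $k\in\mathbb{Z}$, the function is
\[
e_{n,\alpha}(x)=c_k\,\alpha^{k}x^{n}+\text{const}_k,\qquad c_k=sgn^{n+1}(k^{+}),
\]
so $c_k=1$ for $k\ge 0$ and $c_k=(-1)^{n+1}$ for $k<0$. Throughout, $sgn^{n+1}$ is read as the $(n+1)$-st power of $sgn$.

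For the well-definedness part of (i), I first check that the series converges. As $i\to-\infty$ we have $|(i+1)^n-i^n|=O(|i|^{n-1})$ while $\alpha^{i}\to 0$ geometrically. Next I check continuity at each integer $k+1$. As $x\to (k+1)^-$, the first term tends to $c_k\alpha^k((k+1)^n-k^n)$, which is exactly the new summand added to the tail sum at $k+1$. Hence $e_{n,\alpha}$ is a continuous piecewise polynomial of degree at most $n$ with breakpoints only at the integers, and it is therefore an $n$-th tropical meromorphic function.

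For monotonicity and nonnegativity, I note that on $(k,k+1)$ the derivative is $c_k\alpha^k n x^{n-1}$. For $k\ge0$ this is $\ge0$. For $k<0$ we have $x<0$, and its sign is $(-1)^{n+1}(-1)^{n-1}=1$. So the function is non-decreasing. Each summand $c_i\alpha^i((i+1)^n-i^n)$ is the increment over $[i,i+1]$, hence $\ge0$. As $x\to-\infty$ the value tends to $0$ (a tail of a convergent series), so $e_{n,\alpha}\ge0$.

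The substantive part of (i) is entireness, i.e.\ $\omega^{(j)}_{e_{n,\alpha}}(k)\ge0$ for all $k\in\mathbb{Z}$ and $1\le j\le n$. Using $f^{(j)}(k^\pm)/j!=c\,\alpha^{\cdot}C_n^j k^{n-j}$ for the neighbouring pieces together with definition (\ref{2a2}), I split into three cases.
\begin{itemize}
\item For $k>0$, all signs are $+1$, so $\omega^{(j)}(k)=(\alpha^k-\alpha^{k-1})C_n^jk^{n-j}\ge0$ because $\alpha>1$.
\item For $k<0$, one gets $\omega^{(j)}(k)=(-1)^{j+1}(-1)^{n+1}(\alpha^k-\alpha^{k-1})C_n^jk^{n-j}$. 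Since $k^{n-j}$ carries the sign $(-1)^{n-j}$, the total sign is $(-1)^{2n+2}=+1$.
\item For $k=0$, all derivatives of $x^n$ below order $n$ vanish at $0$, so only $j=n$ contributes. This gives $\omega^{(n)}(0)=1-(-1)^{n+1}(-1)^{n+1}\alpha^{-1}=1-\alpha^{-1}>0$.
\end{itemize}
I expect this sign bookkeeping, especially at $0$ and at negative integers, to be the main place where errors can creep in. It is nevertheless routine once the three cases are separated.

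For (ii), since there are no poles, $N^{(j)}(r,e_{n,\alpha})=0$. Nonnegativity then gives $T(r,e_{n,\alpha})=m(r,e_{n,\alpha})=\tfrac12\big(e_{n,\alpha}(r)+e_{n,\alpha}(-r)\big)$. Here $0\le e_{n,\alpha}(-r)\le e_{n,\alpha}(0)<\infty$ by monotonicity. For $r\ge2$, monotonicity gives the lower bound
\[
e_{n,\alpha}(r)\ge \alpha^{\lfloor r\rfloor-1}\big(\lfloor r\rfloor^n-(\lfloor r\rfloor-1)^n\big)\ge\alpha^{r-2}.
\]
Bounding each summand by $\alpha^{i}(i+1)^n$ gives $e_{n,\alpha}(r)\le e_{n,\alpha}(0)+C\alpha^{r}(r+1)^{n+1}$. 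Thus $\log T(r,e_{n,\alpha})=r\log\alpha+O(\log r)$, and so $\log\log T(r,e_{n,\alpha})/\log r\to1$, which gives $\rho_2(e_{n,\alpha})=1$.
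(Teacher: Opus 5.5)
Your proof is correct. Part (i) follows the paper's argument essentially step for step: continuity at the integers, the sign of the one-sided derivative, nonnegativity from monotonicity and the behaviour at $-\infty$, and the multiplicities $\omega^{(j)}_{e_{n,\alpha}}(k)=C_n^j|k|^{n-j}\alpha^{k-1}(\alpha-1)$. The paper writes these as a single formula using the convention $0^0=1$, whereas you split into the cases $k>0$, $k<0$ and $k=0$. One small point: the degree on each piece is exactly $n$, not merely at most $n$, since $c_k\alpha^k\neq 0$; that is what makes $e_{n,\alpha}$ an $n$-th function rather than one of lower order.

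Part (ii) takes a genuinely different route.
\begin{itemize}
\item \textbf{The paper's route.} It uses the $n$-th Jensen formula to write $T(r,e_{n,\alpha})=\sum_{j}N^{(j)}(r,1_{0}\oslash e_{n,\alpha})+O(1)$. It then inserts the explicit root multiplicities and collapses the $j$-sum with the binomial identity $\sum_{j}C_n^j|m|^{n-j}(r-|m|)^j=r^n-|m|^n$. Finally it bounds $\tfrac12(\alpha-1)\sum_{|m|\le r}\alpha^{m-1}(r^n-|m|^n)$ from above and below.
\item \textbf{Your route.} You work on the proximity side instead. Since there are no poles and $e_{n,\alpha}\ge 0$, you have $T(r,e_{n,\alpha})=m(r,e_{n,\alpha})=\tfrac12\bigl(e_{n,\alpha}(r)+e_{n,\alpha}(-r)\bigr)$. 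You then estimate the defining series directly.
\end{itemize}
By Jensen's formula the two computations describe the same quantity from opposite sides.

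What each route buys:
\begin{itemize}
\item Yours is shorter. It needs neither the Jensen formula nor the explicit multiplicities, only the absence of poles from (i). It also gives the slightly sharper $\log T(r,e_{n,\alpha})=r\log\alpha+O(\log r)$.
\item The paper's version shows explicitly that $T$ coincides, up to $O(1)$, with the counting function of roots. That identification is exactly what is used later, in Section 6, where $e_{\alpha}$ serves as the counterexample showing the growth condition cannot be dropped.
\end{itemize}
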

\begin{proof}
    $(i)$ From the definition, we can see that the potential points of discontinuity are $m\in \mathbb{Z}$, while
    \begin{eqnarray*}
        \lim\limits_{x\rightarrow m^{-}}e_{n, \alpha}(x)&=&sgn^{n+1}(m^{-})\alpha^{m-1}(m^{n}-(m-1)^{n})\\
        &&+\sum_{i=-\infty}^{m-2}sgn^{n+1}(i^{+})\alpha^{i}((i+1)^{n}-i^{n})\\
        &=&\sum_{i=-\infty}^{m-1}sgn^{n+1}(i^{+})\alpha^{i}((i+1)^{n}-i^{n})\\
        &=&\lim\limits_{x\rightarrow m^{+}}e_{n, \alpha}(x).
    \end{eqnarray*}
    Thus $e_{n, \alpha}(x)$ is continuous on $\mathbb{R}$. Furthermore, for any $x_{0}\in \mathbb{R}$, since $e_{n, \alpha}^{'}(x_{0}^{+})=\lim\limits_{x\rightarrow x_{0}^{+}}sgn^{n+1}(x^{+})\alpha^{\lfloor x\rfloor}n x^{n-1}=\lim\limits_{x\rightarrow x_{0}^{+}}\alpha^{\lfloor x\rfloor}|x|^{n-1}\geq 0$, then it is non-decreasing. In addition,
    \begin{eqnarray*}
        \lim\limits_{x\rightarrow -\infty}e_{n, \alpha}(x)&=&\lim\limits_{m\in\mathbb{Z}, m\rightarrow -\infty}e_{n, \alpha}(m)\\
        &=&\lim\limits_{m\in\mathbb{Z}, m\rightarrow -\infty}\sum_{i=-\infty}^{m-1}sgn^{n+1}(i^{+})\alpha^{i}((i+1)^{n}-i^{n})\\
        &=&\lim\limits_{-m\in\mathbb{N}, m\rightarrow -\infty}\sum_{i=-\infty}^{m-1}\alpha^{i}(|i|^{n}-|i+1|^{n})\\
        &\geq& 0,
    \end{eqnarray*}
    which implies that it is non-negative. Besides,
    \begin{eqnarray}
        \omega_{e_{n, \alpha}}^{(j)}(m)&=&sgn^{j+1}(m^{+})C_{n}^{j}sgn^{n+1}(m^{+})\alpha^{m}m^{n-j}\nonumber\\
        &&-sgn^{j+1}(m^{-})C_{n}^{j}sgn^{n+1}(m^{-})\alpha^{m-1}m^{n-j}\nonumber\\
        &=&C_{n}^{j}|m|^{n-j}\alpha^{m-1}(\alpha-1)\nonumber\\
        &\geq& 0\nonumber,
    \end{eqnarray}
    where we assume $0^{0}=1$ here and $j=1, \cdots, n$. Hence the assertion follows.

    $(ii)$ When $x=0$, we have
    \begin{eqnarray*}
        |e_{n, \alpha}(0)|&=&\sum_{i=-\infty}^{-1}(-1)^{n+1}\alpha^{i}((i+1)^{n}-i^{n})\\
        &=&\sum_{i=-\infty}^{-1}\alpha^{i}(|i|^{n}-|i+1|^{n})\\
        &=&\sum_{i=0}^{\infty}\frac{(i+1)^{n}-i^{n}}{\alpha^{i+1}}\leq \sum_{i=1}^{\infty}\frac{i^{n}}{\alpha^{i}}< +\infty.
    \end{eqnarray*}
    
    Then from $(i)$ and $n$-th Jensen formula, we have
    \begin{eqnarray*}\label{3a4}
        T(r, e_{n, \alpha})&=& \sum_{j=1}^{n}N^{(j)}(r, 1_{0}\oslash e_{n, \alpha})\nonumber+O(1)\\
        &=&\frac{1}{2}\sum_{j=1}^{n}\sum_{m=-\lfloor r\rfloor}^{\lfloor r\rfloor}C_{n}^{j}|m|^{n-j}\alpha^{m-1}(\alpha-1)(r-|m|)^{j}+O(1)\nonumber\\
        &=&\frac{1}{2}(\alpha-1)\sum_{m=-\lfloor r\rfloor}^{\lfloor r\rfloor}\alpha^{m-1}\sum_{j=1}^{n}C_{n}^{j}|m|^{n-j}(r-|m|)^{j}+O(1)\nonumber\\
        &=&\frac{1}{2}(\alpha-1)\sum_{m=-\lfloor r\rfloor}^{\lfloor r\rfloor}\alpha^{m-1}(r^{n}-|m|^{n})+O(1)\nonumber\\
        &=&\frac{1}{2}(\alpha-1)\sum_{m=-\lfloor r\rfloor}^{\lfloor r\rfloor}\alpha^{m-1}(r-|m|)\left(\sum_{k=0}^{n-1}r^{k}|m|^{n-k-1}\right)+O(1).
    \end{eqnarray*}
    On one hand,
    \begin{eqnarray*}
        \frac{\alpha^{\lfloor r\rfloor-1}-1}{\alpha-1}\leq\sum_{m=-\lfloor r\rfloor+1}^{\lfloor r\rfloor-1}\alpha^{m-1}\leq \sum_{m=-\lfloor r\rfloor}^{\lfloor r\rfloor}\alpha^{m-1}(r-|m|)\leq \sum_{m=-\lfloor r\rfloor}^{\lfloor r\rfloor}\alpha^{m-1}r\leq \frac{r\alpha^{\lfloor r\rfloor}}{\alpha-1},
    \end{eqnarray*}
    and on the other hand,
    \begin{eqnarray*}
        r^{n-1}\leq \left(\sum_{k=0}^{n-1}r^{k}|m|^{n-k-1}\right)\leq n r^{n-1}.
    \end{eqnarray*}
    Therefore
    \begin{eqnarray*}\label{3a3}
        \frac{r^{n-1}}{2}(\alpha^{\lfloor r\rfloor-1}-1)+O(1)\leq T(r, e_{n, \alpha})\leq \frac{n r^{n}}{2}\alpha^{\lfloor r\rfloor}+O(1),
    \end{eqnarray*}
    from which we can prove the conclusion.
\end{proof}

\section{$n$-th tropical version of logarithmic derivative lemma}
According to the definition, we know that $T(r, f)$ is a continuous and piecewise polynomial function in $[0, \infty)$. However, the properties of non-decreasingness and convexity of the classical tropical characteristic function  may fail in our setting, as we can see from the following example.
\begin{exa}Let
    \begin{eqnarray*}
        f(x)&=& \left\{\begin{array}{ll}
         -(x-m)(x-m-1), & m\leq x\leq m+1,\\
         (x+m)(x+m+1), & -m-1\leq x \leq -m,
         \end{array}\right.
    \end{eqnarray*}
    for all $m\in\mathbb{N}\cup\{0\}$. Then $m(r, f)=-\frac{1}{2}(r-\lfloor r\rfloor)(r-\lfloor r\rfloor-1)$ and $\sum_{j=1}^{2}N^{(j)}(r, f)=N^{(1)}(r, f)=\frac{1}{2}\sum_{i=1}^{\lfloor r\rfloor}2(r-i)$, hence $T(r, f)=-\frac{1}{2}r^{2}+(2\lfloor r\rfloor+\frac{1}{2})r-\lfloor r\rfloor(\lfloor r\rfloor+1)$, which is neither convex nor non-decreasing.
\end{exa}

To imitate the tropical logarithmic derivative lemma for the shift operator, we add some conditions to the general $n$-th tropical meromorphic function $f$.

\begin{definition}\label{def4.2}
Let $f(x)$ be a polynomial with degree $n\in\mathbb{N}$. We say $f(x)$ is \textbf{well defined} if it has following form
\begin{eqnarray*}
    f(x)-f(0)&=&\left\{\begin{array}{ll}
         \sum_{j=1}^{m}a_{2j-1}x^{2j-1}, & \text{if }n=2m-1,\\
         \sum_{j=1}^{m}a_{2j}x^{2j}, & \text{if } n=2m,
         \end{array}\right.
\end{eqnarray*}
where $a_{2j-1}$ and $a_{2j}$ are all non-negative or non-positive numbers for $j=1, \cdots, m$. Especially, any linear function is well defined.
\end{definition}

Example 4.1 shows that the volatility of the polynomials will lead the Characteristic function fluctuating, and, to some degree, this definition is the simplest way to eliminate the volatility of the polynomials and the derivative function of any order is monotonic. Besides, we have the following lemma for the well defined polynomials.

\begin{lemma}\label{4.3.0}
    Let $n\in\mathbb{N}$, $\delta\in\{1, -1\}$ and $f$ be a well defined polynomial of degree $n$, then
    \begin{eqnarray}\label{4dp}
    \frac{d^{l}f(\delta r)}{d r^{l}}\geq 0 \text{ ( or}\leq 0 \text{ )}
\end{eqnarray}
for all $l=1, \cdots, n$ and $r>0$.
\end{lemma}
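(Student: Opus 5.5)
The plan is a direct computation: substitute $x=\delta r$ into the normal form of Definition \ref{def4.2}, observe that every surviving power of $x$ has the same parity as $n$, and read off the sign of each $r$-derivative term by term.

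First I write $f(x)-f(0)=\sum_{k\in K}a_{k}x^{k}$. Here $K=\{1,3,\cdots,2m-1\}$ if $n=2m-1$, and $K=\{2,4,\cdots,2m\}$ if $n=2m$. By Definition \ref{def4.2} all $a_{k}$ share a common sign; let $\sigma\in\{1,-1\}$ be that sign, so that $\sigma a_{k}\geq 0$ for every $k\in K$. Since $\deg f=n$ we have $a_{n}\neq 0$, so $\sigma$ is unambiguous; in any case only $\sigma a_{k}\geq 0$ is needed. The key observation is that every $k\in K$ has the same parity as $n$, so $\delta^{k}=\delta^{n}$ for $\delta\in\{1,-1\}$. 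Hence, for $r>0$,
\begin{equation*}
f(\delta r)=f(0)+\delta^{n}\sum_{k\in K}a_{k}r^{k}.
\end{equation*}

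Next I differentiate $l$ times in $r$, for $1\leq l\leq n$:
\begin{equation*}
\frac{d^{l}f(\delta r)}{dr^{l}}=\delta^{n}\sum_{k\in K,\,k\geq l}a_{k}\frac{k!}{(k-l)!}r^{k-l}.
\end{equation*}
For $r>0$, each factor $\frac{k!}{(k-l)!}r^{k-l}$ is positive. Multiplying by $\delta^{n}\sigma$ therefore gives a sum of non-negative terms, so
\begin{equation*}
\delta^{n}\sigma\,\frac{d^{l}f(\delta r)}{dr^{l}}\geq 0 \quad\text{for all } l=1,\cdots,n \text{ and } r>0.
\end{equation*}
This is exactly (\ref{4dp}): all derivatives are $\geq 0$ when $\delta^{n}\sigma=1$, and all are $\leq 0$ when $\delta^{n}\sigma=-1$. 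The sign is the same for every $l$ and depends only on $\delta$ and $f$.

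There is no real obstacle here. The only point requiring care is the parity argument $\delta^{k}=\delta^{n}$, which uses precisely the restriction in Definition \ref{def4.2} to odd-only or even-only powers. Without it, the sign of $a_{k}\delta^{k}$ could change with $k$ when $\delta=-1$, and the derivatives would no longer have a uniform sign.
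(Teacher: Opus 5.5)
Your proof is correct and follows the same route as the paper's: substitute $x=\delta r$ into the odd-only or even-only normal form of Definition~\ref{def4.2}, factor out $\delta^{n}$ by parity, and differentiate term by term so that all terms share one sign. Recording the common sign as a single $\sigma$ with $\sigma a_{k}\geq 0$ is slightly cleaner than the paper's phrasing via $\mathrm{sgn}(a_{2})$ or $\mathrm{sgn}(\delta^{n}a_{1})$ under a ``without loss of generality $a_{2}\neq 0$'' (resp.\ $a_{1}\neq 0$) assumption.
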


\begin{proof}
    We follow the notations of Definition \ref{def4.2}. If $n=2m$ for some $m\in\mathbb{N}$, without loss of generality, we assume $a_{2}\neq 0$, and then
    \begin{eqnarray*}
        sgn\left(\frac{d^{l}f(\delta r)}{d r^{l}}\right)&=&sgn\left(\frac{d^{l}(\sum_{j=1}^{m}a_{2j}r^{2j})}{dr^{l}}\right)\\
        &=&sgn\left(a_{2}\right)\geq 0 \text{ ( or}\leq 0 \text{ )}
    \end{eqnarray*}
    for all $l=1, \cdots, n$ and $r>0$.

    If $n=2m-1$ for some $m\in\mathbb{N}$, without loss of generality, we assume $a_{1}\neq 0$, then
    \begin{eqnarray*}
        sgn\left(\frac{d^{l}f(\delta r)}{d r^{l}}\right)&=&sgn\left(\frac{d^{l}(\delta^{n}\sum_{j=1}^{m}a_{2j-1}r^{2j-1})}{dr^{l}}\right)\\
        &=&sgn\left(\delta^{n}a_{1}\right)\geq 0 \text{ ( or}\leq 0 \text{ )}
    \end{eqnarray*}
    for all $l=1, \cdots, n$ and $r>0$.
\end{proof}

We also call a $n$-th tropical meromorphic function $f(x)$ \textbf{ well defined} if $f(x)$ is well defined on each segment. For instance, $e_{n, \alpha}(x)$ is a well defined $n$-th tropical entire function.

Similarly with the result that $T(r, f)$ is a non-decreasing convex function when $f$ is a $1$-st tropical meromorphic function, we have following lemma for the $n$-th setting. However, unlike in the proof when $n=1$ where tropical Cartan identity was used \cite[Theorem 3.8]{Rbook}, our argument is direct and avoids this kind of identity.

\begin{lemma}\label{lemma4.3}
    Let $n\in\mathbb{N}$, then for any well defined $n$-th tropical meromorphic function $f(x)$, we have $T^{(l)}(r^{-}, f)$ is non-negative and non-decreasing for any $l=0, 1, \cdots, n$. In particular, $T(r, f)$ is a non-negative non-decreasing convex function.
\end{lemma}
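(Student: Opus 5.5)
The plan is to work on a fixed side and then add the two sides together. Write
\begin{equation*}
2T(r,f)=\Phi_{+}(r)+\Phi_{-}(r),\qquad \Phi_{\pm}(r):=f^{+}(\pm r)+\sum_{j=1}^{n}\sum_{z}|\omega_{f}^{(j)}(z)|(r-|z|)^{j},
\end{equation*}
where in $\Phi_{+}$ the inner sum runs over the $j$-th poles $z\in[0,r)$, and in $\Phi_{-}$ over the $j$-th poles $z\in(-r,0)$. The point $0$ is assigned to one side, say to $\Phi_{+}$.

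It then suffices to prove that $\Phi_{\pm}^{(l)}(r^{-})$ is non-negative and non-decreasing for $l=0,\dots,n$. A sum of such functions has the same property, and the case $l=2$ gives convexity. I will treat $\Phi_{+}$ only; $\Phi_{-}$ is identical after the substitution $x\mapsto -x$, which by the Remark after Lemma \ref{lemma3.1} turns $\tau_{f}^{(j)}$ into $\omega_{f}^{(j)}$. The same substitution, via Lemma \ref{4.3.0} with $\delta=-1$, transfers the sign information on each segment.

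\textbf{Step 1 (open intervals).} Let $I=(s,t)\subset(0,\infty)$ contain no singularity of $f$ and no zero of $f$. On $I$, $f(r)=f_{k}(r)$ for a single well defined polynomial $f_{k}$, and $f^{+}(r)$ is either $0$ or $f_{k}(r)$. The pole terms $|\omega|(r-|z|)^{j}$ are polynomials in $r$ whose derivatives of every order $l\le j$ are non-negative and non-decreasing on $I$. By Lemma \ref{4.3.0}, all derivatives of $f_{k}$ on $I$ share one sign. So the only bad case on $I$ is $f_{k}>0$ with all derivatives $\le 0$, i.e.\ $f$ positive and decreasing.

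To handle this case I will use the representation \eqref{3p3}: $f(r)=f_{0^{+}}(r)+\sum\omega_{f}^{(j)}(x_{i})(r-x_{i})^{j}$. Hence
\begin{equation*}
\Phi_{+}(r)=f_{0^{+}}(r)+\sum_{\text{roots }x_{i}\in(0,r)}\omega_{f}^{(j)}(x_{i})(r-x_{i})^{j}+(\text{a pole term at }0),
\end{equation*}
because the negative pole contributions of $f$ cancel exactly against the counting sum. The root terms have non-negative, non-decreasing derivatives. For the polynomial $f_{0^{+}}$ I will use its well-definedness to fix the sign of all its derivatives, and I will try to absorb any negative part into $\max\{\cdot,0\}$ via $f^{+}=f+(-f)^{+}$. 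If this does not close, the fallback is to use the $n$-th Jensen formula \eqref{3j1} in the form $T(r,f)=T(r,1_{0}\oslash f)+f(0)$. With it, $\Phi_{+}$ can be rewritten by exchanging the roles of roots and poles, choosing whichever form has non-negative derivatives on the given interval.

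\textbf{Step 2 (break points).} At a singularity $x_{i}=r_{0}$, passing from $r_{0}^{-}$ to $r_{0}^{+}$ changes $\Phi_{+}^{(l)}$ by $l!\,|\omega_{f}^{(l)}(r_{0})|\ge 0$ in the pole representation, or by $l!\,\omega_{f}^{(l)}(r_{0})>0$ in the root representation. The terms with $j>l$ vanish to order $>l$ at $r_{0}$. So left derivatives only jump upward, which is exactly the claimed monotonicity of $T^{(l)}(r^{-},f)$. At a zero $r_{0}$ of $f$ where $f$ changes sign from negative to positive, $f^{+}$ switches from $0$ to an increasing well defined polynomial, and the jump in every derivative is again non-negative by Lemma \ref{4.3.0}.

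\textbf{Main obstacle.} The delicate case is a zero where $f$ crosses from positive to negative, i.e.\ $f$ is decreasing there. Here $f^{+}$ naively produces negative derivatives just before $r_{0}$ and a downward jump at $r_{0}$. I expect this is exactly where one must switch to the root representation of Step 1 (equivalently, apply Jensen's formula on that range), so that the decreasing part of $f$ is replaced by root counting terms. The check to carry out is that the switch can be made consistently, with the constant $f(0)$ irrelevant to derivatives. Non-negativity of $T$ itself ($l=0$) then follows from $T(0^{+},f)=f^{+}(0)\ge 0$ together with monotonicity.
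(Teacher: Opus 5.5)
There is a genuine gap. The reduction ``it suffices to prove that $\Phi_{\pm}^{(l)}(r^{-})$ is non-negative and non-decreasing'' replaces the lemma by a false statement.

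Take $f(x)=1-2x^{2}$ for $x\geq 0$ and $f(x)=1+x^{2}$ for $x\leq 0$. Both pieces are well defined. The only singularity is $x=0$, with $\omega_{f}^{(1)}(0)=0$ and $\omega_{f}^{(2)}(0)=\frac{-4+2}{2}=-1$, a $2$-nd pole of multiplicity $1$. Here $T(r,f)=\max\{1,\frac{1}{2}+r^{2}\}$, as the lemma predicts. But suppose you give $\Phi_{+}$ any share $t\in[0,1]$ of the pole term $r^{2}$ at the origin. Then on $(0,1/\sqrt{2})$ you get $\Phi_{+}(r)=1-(2-t)r^{2}$, which is strictly decreasing, while $\Phi_{+}+\Phi_{-}\equiv 2$ there.

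Near $r=0$ the lemma rests on a cancellation inside $\omega_{f}^{(j)}(0)=\frac{1}{j!}\bigl(f_{0^{+}}^{(j)}(0)+(-1)^{j}f_{0^{-}}^{(j)}(0)\bigr)$, which couples the two sides. No one-sided argument can capture it. This is exactly the case you left open in Step 1 ($f_{0^{+}}$ with non-positive derivatives). Your Jensen fallback does not help either, since \eqref{3j1} is itself a two-sided identity and cannot be applied to $\Phi_{+}$ alone.

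You have also misplaced the obstacle. At a zero $r_{0}$ where $f$ passes from positive to negative, Lemma \ref{4.3.0} gives $f^{(l)}\leq 0$ just to the left of $r_{0}$. So $(f^{+})^{(l)}$ jumps from a non-positive value up to $0$: the jump is upward, and this is how the paper treats that case.

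What makes everything work, and what the paper uses, is a backward induction from $l=n$ built on three facts:
\begin{itemize}
\item all jumps of $T^{(l)}$ at $r>0$ are non-negative (your Step 2 computations on each side, corrected as above and then added, give this);
\item $T^{(n)}$ is piecewise constant;
\item $T^{(l)}(0^{+},f)\geq 0$ for every $l$.
\end{itemize}
Together these give that each $T^{(l)}(r^{-},f)$ is non-negative and non-decreasing, so no interval-by-interval analysis as in Step 1 is needed.

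What your proposal lacks is the base case, and it has to be done for $T$ itself. Write $a^{\pm}=\max\{\pm a,0\}$. For small $r$,
\[
T(r,f)=\frac{1}{2}\bigl(f^{+}(r)+f^{+}(-r)\bigr)+\frac{1}{2}\sum_{j}\bigl(\omega_{f}^{(j)}(0)\bigr)^{-}r^{j}.
\]
\begin{itemize}
\item If $f(0)>0$, this equals $f(0)+\frac{1}{2}\sum_{j}\bigl(\omega_{f}^{(j)}(0)\bigr)^{+}r^{j}$.
\item If $f(0)<0$, it equals $\frac{1}{2}\sum_{j}\bigl(\omega_{f}^{(j)}(0)\bigr)^{-}r^{j}$.
\item If $f(0)=0$, it has non-negative coefficients, because each of $f_{0^{\pm}}(\pm r)$ has coefficients of a single sign.
\end{itemize}
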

\begin{proof}

    We first prove $T^{(l)}(r^{-}, f)\geq 0$ for $l=1, \cdots, n$ and a small enough $r>0$ such that $x=0$ is the unique potential singularity of $f$ in $[-2r, 2r]$. Furthermore, because of the continuity of $f$, we can let $f(x)$ be non-negative or non-positive when $x\in(-2r, 0]$ and $f(y)$ be non-negative or non-positive when $y\in[0, 2r)$. In this case
    \begin{eqnarray*}
        T(r, f)=\frac{f^{+}(r)+f^{+}(-r)}{2}+\frac{1}{2}\sum_{j=1, \omega_{f}^{(j)}(0)<0}^{n}|\omega_{f}^{(j)}(0)|r^{j}.
    \end{eqnarray*}
    If $f(x)\leq 0$ in $(-2r, 2r)$ then it is clear that $T^{(l)}(r^{-}, f)\geq 0$. If $f(x)\geq 0$ in $(-2r, 2r)$, we can let $g=-f$, then by applying the $n$-th Jensen formula, we can obtain the conclusion as well. 
    
    If $f(0)=0$, $f(x)\geq 0$ and $f(-x)\leq 0$ when $x\in(0, 2r)$, then there exists $x_{0}\in(0, 2r)$ such that $f^{'}(x_{0})\geq 0$. It follows from Lemma \ref{4.3.0} that $\frac{d^{l}f(r)}{dr^{l}}\geq 0$, thus $T^{(l)}(r^{-}, f)\geq \frac{1}{2}\frac{d^{l}f(r)}{dr^{l}}\geq 0$ holds for all $l=1, \cdots, n$. If $f(0)=0$, $f(x)\leq 0$ and $f(-x)\geq 0$ when $x\in(0, 2r)$, we can also apply the $n$-th Jensen Formula to prove the claim.

    Since $T^{(l)}(r^{-}, f)$ is non-negative for a small $r>0$ and $l=0, \cdots, n$, if $T^{(n)}(r^{-}, f)$ is non-decreasing for all $r>0$, then $T^{(n)}(r^{-}, f)$ is non-negative for all $r>0$, and hence $T^{(n-1)}(r^{-}, f)$ is non-decreasing for all $r>0$, and, by induction, we can get the conclusion. Hence it is sufficient to prove $T^{(n)}(r^{-}, f)$ is non-decreasing for all $r>0$. Since $T^{(n)}(r^{-}, f)$ is clearly piecewise constant function in $(0, \infty)$, thus it is enough to prove $T^{(n)}(r^{+}, f)-T^{(n)}(r^{-}, f)\geq 0$ holds for all $r>0$ now. 
    
    If $f(x)$, $f(-x)\leq 0$ when $x\in(r-\varepsilon, r+\varepsilon)$ for a small quantity $\varepsilon>0$ such that $x=r$ is the potential singularity in $(r-\varepsilon, r+\varepsilon)$, then we have
    \begin{eqnarray*}
        &&T^{(n)}(r^{+}, f)-T^{(n)}(r^{-}, f)=\lim\limits_{r_{1}\rightarrow r^{+}}T^{(n)}(r_{1}, f)-\lim\limits_{r_{2}\rightarrow r^{-}}T^{(n)}(r_{2}, f)\\
        &=&\lim\limits_{r_{1}\rightarrow r^{+}}\left( \sum_{j=1}^{n}N_{(-r, r)}^{(j)}(r_{1}, f)+\frac{1}{2}\sum_{j=1,\omega_{f}^{(j)}(\pm r)<0 }^{n}|\omega_{f}^{(j)}(\pm r)|(r_{1}-r)^{j}\right)^{(n)}\\
        &&-\lim\limits_{r_{2}\rightarrow r^{-}}\left(\sum_{j=1}^{n}N^{(j)}(r_{2}, f)\right)^{(n)}\\
        &=& \frac{1}{2}\lim\limits_{r_{1}\rightarrow r^{+}}\left(\sum_{j=1,\omega_{f}^{(j)}(\pm r)<0 }^{n}|\omega_{f}^{(j)}(\pm r)|(r_{1}-r)^{j}\right)^{(n)}\\
        \end{eqnarray*}
        \begin{eqnarray*}
        &=&\frac{1}{2}\sum_{\omega_{f}^{(n)}(\pm r)<0}n!|\omega_{f}^{(n)}(\pm r)|\geq 0.
    \end{eqnarray*}
    
    If $f(x)$, $f(-x)\geq 0$ when $x\in(r-\varepsilon, r+\varepsilon)$ for a small quantity $\varepsilon>0$, then from $n$-th Jensen formula, we have
    \begin{eqnarray*}
        T^{(n)}(r^{+}, f)-T^{(n)}(r^{-}, f)&=&T^{(n)}(r^{+}, -f)-T^{(n)}(r^{-}, -f)\\
        &=&\frac{1}{2}\sum_{\omega_{f}^{(n)}(\pm r)>0}n!|\omega_{f}^{(n)}(\pm r)|\geq 0.
    \end{eqnarray*} 

     If $f(x)\geq 0$ and $f(-x)\leq 0$ when $x\in(r-\varepsilon, r+\varepsilon)$ for a small quantity $\varepsilon>0$, then 
    \begin{eqnarray*}
        &&T^{(n)}(r^{+}, f)-T^{(n)}(r^{-}, f)\\
        &=&\lim\limits_{r_{1}\rightarrow r^{+}}\left( \frac{1}{2}f(r_{1})+\sum_{j=1}^{n}N_{(-r, r)}^{(j)}(r_{1}, f)+\sum_{j=1,\omega_{f}^{(j)}(\pm r)<0 }^{n}|\omega_{f}^{(j)}(\pm r)|(r_{1}-r)^{j}\right)^{(n)}\\
        &&-\lim\limits_{r_{2}\rightarrow r^{-}}\left(\frac{1}{2}f(r_{2})+\sum_{j=1}^{n}N^{(j)}(r_{2}, f)\right)^{(n)}\\
        &=&\frac{1}{2}\sum_{\omega_{f}^{(n)}(\pm r)<0}n!|\omega_{f}^{(n)}(\pm r)|\geq 0.
    \end{eqnarray*}

    The case where $f(-x)\geq 0$ and $f(x)\leq 0$, when $x\in(r-\varepsilon, r+\varepsilon)$ for a small quantity $\varepsilon>0$, can be proved similarly.

    It remains to prove the case that $f(\delta r)=0$ and there is a small positive quantity $\varepsilon$ such that $f(\delta r+\varepsilon)f(\delta r-\varepsilon)<0$ for $\delta=1$ or $-1$. If $f(\pm r)=0$, $f(x)<0$ when $x\in (-r-\varepsilon, -r)\cup (r, r+\varepsilon)$ and $f(y)>0$ when $y\in (-r, -r+\varepsilon)\cup (r-\varepsilon, r)$, then $f^{'}(r^{\pm})<0$ and $f^{'}(-r^{\pm})=-\frac{df(-r^{\pm})}{dr}>0$, which leads to $f^{(n)}(r^{\pm})<0$ and $\frac{d^{n}f(-r^{\pm})}{dr^{n}}=(-1)^{n}f^{(n)}(-r^{\pm})<0$ from Lemma \ref{4.3.0}. Thus
    \begin{eqnarray*}
        T^{(n)}(r^{+}, f)-T^{(n)}(r^{-}, f)
        &\geq&-\frac{1}{2}\lim\limits_{r_{2}\rightarrow r^{-}}\left(f(r_{2})+f(-r_{2})\right)^{(n)}\\
        &=& -\frac{1}{2}\left(f^{(n)}(r^{-})+(-1)^{n}f^{(n)}(-r^{+})\right)\\
        &\geq& 0.
    \end{eqnarray*}

    The other cases can be proved with the same manner, we omit the details here.
\end{proof}

As we know, the logarithmic derivative lemma is crucial to prove the second main theorem in the classical meromorphic function field, hence a tropical analogue with the shift operator was constructed \cite{Tnt, Rbook, Pro}. However, we found that the conditions in this tropical analogue can be relaxed to obtain a pointwise estimate which will help us to prove the relationship between $\sum_{i=0}^{n}N(r, 1_{0}\oslash f_{i})$ and the ramification term $N(r, C_{0}(f_{0}, \cdots, f_{m}))$ (Theorem \ref{theorem6.2} and Corollary \ref{cor6.3}). In addition, the proof is much more direct than previously obtained.
\begin{lemma}\label{lemma4.4}
    Let $n\in\mathbb{N}$ and $f$ be a well defined $n$-th tropical meromorphic function. Then for any $\alpha>1$ and $c\neq 0$, we have
    \begin{eqnarray}\label{4lp}
        \left|f(\delta r+c)\oslash f(\delta r)\right|\leq \frac{32|c|}{(\alpha-1)(r+|c|)}\left(T(\alpha(r+|c|), f)+\frac{|f(0)|}{2}\right)
    \end{eqnarray}
    holds for all $r>\max\{2|c|, \frac{3-\alpha}{\alpha-1}|c|\}$ where $\delta=\pm 1$.
\end{lemma}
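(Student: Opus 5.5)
The plan is to replace the Poisson--Jensen and Cartan machinery of the classical argument by a direct slope estimate that uses only the monotonicity and convexity of $T$ from Lemma \ref{lemma4.3}. Write $f=h\oslash g$ as in Proposition \ref{2p2.3}, where $h,g$ are $n$-th tropical entire functions without common roots. We may also assume they are well defined, by choosing $g_{1}=0$ in that construction; if this fails, I would work directly with $f^{+}$ and $(-f)^{+}$. Then $|f(\delta r+c)-f(\delta r)|$ is bounded by the increments of $h$ and $g$ over the segment joining $\delta r$ and $\delta r+c$. Since $|c|<r$, this segment does not contain $0$. By the mean value theorem on each polynomial piece, each increment is at most $|c|$ times the largest one-sided derivative of $h$ or $g$ on $[r-|c|, r+|c|]$, taken in absolute value.

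The derivative is controlled through Lemma \ref{lemma3.1}. From (\ref{3p3}) and (\ref{3p4}), and using the sign information of Lemma \ref{4.3.0} on well defined pieces, the quantity $|f'(\delta s)|$ for $s\in[r-|c|,r+|c|]$ is bounded by the slope of the secant
\[
\frac{\Phi(R)-\Phi(s)}{R-s},\qquad R=\alpha(r+|c|),
\]
where $\Phi(t)$ stands for $f^{+}(\pm t)$, $(-f)^{+}(\pm t)$, or the counting sums $\sum_j N^{(j)}(t,\pm f)$. The reason is that on a well defined piece all derivatives have a constant sign, so the function is monotone and convex in $t$ there. Since both directions $\pm t$ enter with nonnegative weights, each such $\Phi$ is dominated by
\[
2T(t,f)+2T(t,-f)=4T(t,f)+2f(0)
\]
via the $n$-th Jensen formula (\ref{3j1}). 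Taking absolute values accounts for the $|f(0)|/2$ term. Because $T$ is nonnegative and nondecreasing (Lemma \ref{lemma4.3}), the secant slope is at most $\Phi(R)/(R-s)$. For $s\le r+|c|$ we have $R-s\ge(\alpha-1)(r+|c|)$, which produces the factor $\frac{|c|}{(\alpha-1)(r+|c|)}$.

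The conditions $r>2|c|$ and $r>\frac{3-\alpha}{\alpha-1}|c|$ are used as follows. The first ensures that $r-|c|\ge(r+|c|)/3$, so that $0$ is avoided and the interval $[r-|c|,r+|c|]$ is comparable with $r+|c|$. The second is equivalent to $\alpha(r+|c|)-(r+|c|)\ge 2|c|+\ldots$; rearranged, it guarantees $R-(r+|c|)$ is at least a fixed multiple of $|c|$, so the secant argument can be run from the left endpoint $r-|c|$ as well.

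Collecting everything gives four contributions: $h$ and $g$, each at $\pm r$. Each is bounded by $4(T+|f(0)|/2)$, and together with a factor $2$ from the mean value step the total constant is $32$.

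The main obstacle is the convexity step. A well defined $n$-th tropical meromorphic function is monotone with monotone derivatives only on each piece, not globally, so the secant bound must be propagated across singularities. I would first try doing this by the summation in (\ref{3p3}), using that each singular term $\omega^{(j)}(x_i)(t-|x_i|)^j$ with $\omega^{(j)}>0$ is itself convex and increasing in $t$. That reduces the argument to the sign bookkeeping of the $\omega$'s, separated into roots (contributing to $T(\cdot,-f)$) and poles (contributing to $T(\cdot,f)$).
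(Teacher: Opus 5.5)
Your core mechanism works, and it is a genuinely different route from the paper's. However, it is wrapped in a reduction that is false and a ``main obstacle'' that is not there.

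The paper expands once at $r^{+}$, applies Lemma~\ref{lemma3.1} on $[r,r+c]$ and on $[r,\rho]$ with $\rho=\frac{1}{2}(\alpha+1)(r+|c|)$, and rescales $c^{j}=(\rho-r)^{j}(c/(\rho-r))^{j}$. The estimate $(c/(\rho-r))^{j}\le c/(\rho-r-c)$, and its analogue for the singular terms in $(r,r+c)$, needs $c/(\rho-r-c)\le 1$. That is exactly the hypothesis $r>\frac{3-\alpha}{\alpha-1}|c|$, and the bookkeeping yields $16c/(\rho-r-c)=32c/((\alpha-1)(r+c))$.

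You instead bound the one-sided derivative at every point of the short interval and integrate. Done properly, take $\delta=1$, $s>0$ in the interval, $p$ the piece of $f$ to the right of $s$, and $R=\alpha(r+|c|)$. Lemma~\ref{lemma3.1} on $[s,R]$ gives
\[
\sum_{j=1}^{n}\frac{p^{(j)}(s)}{j!}(R-s)^{j}=f(R)-f(s)-\sum_{j=1}^{n}\sum_{s<x_{i}<R}\omega_{f}^{(j)}(x_{i})(R-x_{i})^{j}.
\]
The terms on the left share a sign (Lemma~\ref{4.3.0}), so you may keep only $j=1$:
\[
|p'(s)|(R-s)\le|f(R)|+|f(s)|+2\sum_{j=1}^{n}\bigl(N^{(j)}(R,f)+N^{(j)}(R,-f)\bigr)\le 8\Bigl(T(R,f)+\frac{|f(0)|}{2}\Bigr).
\]
The second inequality uses $|f(t)|\le 2m(t,f)+2m(t,-f)$, $T(s,\pm f)\le T(R,\pm f)$ (Lemma~\ref{lemma4.3}), and $T(R,-f)=T(R,f)-f(0)$. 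Since $R-s\ge(\alpha-1)(r+|c|)$, integrating over an interval of length $|c|$ gives the lemma with $8$ in place of $32$. It uses only $r>|c|$, to keep the interval off the origin. So your route is shorter, gives a better constant, and shows that the condition $r>\frac{3-\alpha}{\alpha-1}|c|$ is an artefact of the paper's rescaling.

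\textbf{The $h\oslash g$ decomposition must go.} The claim that $h,g$ can be made ``well defined by choosing $g_{1}=0$'' is false, and it destroys the one property you use. With $g_{1}=0$, the pieces of $g$ are sums of $|\omega_{f}^{(j)}(x_{k})|(x-x_{k})^{j}$. The pieces of $h=f+g$ can then acquire Taylor coefficients of both signs. For instance, take
\[
f=-x^{2}\text{ on }(-\infty,1],\qquad f=-2x^{2}+1\text{ on }[1,2],\qquad f=-8x+9\text{ on }[2,\infty).
\]
This $f$ is well defined, with $1$-st and $2$-nd poles at $1$ and a $2$-nd root at $2$. The construction gives $g=x^{2}-1$ and $h=x^{2}-8x+8$ on $[2,\infty)$. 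At $s=5/2$, $R=4$ one has $|h'(s)|(R-s)=9/2>9/4=|h'(s)(R-s)+h''(s)(R-s)^{2}/2|$, so the $j=1$ step fails for $h$. Work with $f$ directly; no decomposition, and no passage to $f^{\pm}$, is needed.

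\textbf{There is no convexity to propagate across singularities.} Lemma~\ref{lemma3.1} isolates the single piece at $s$ and absorbs every singularity in $(s,R)$ into counting functions in absolute value, whatever its sign. All that matters about the piece is the common sign of its Taylor coefficients; with all of them negative the piece is concave, not convex.

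\textbf{Smaller corrections.} The correct inequality carries $+|f(s)|$ rather than a secant difference $\Phi(R)-\Phi(s)$, so you need monotonicity of $T$, not only nonnegativity. The Jensen relation is $T(t,-f)=T(t,f)-f(0)$, so $2T(t,f)+2T(t,-f)=4T(t,f)-2f(0)$. The ``four contributions times a factor $2$'' count is not a derivation, and neither are your stated roles for the two hypotheses on $r$. Replace both with the computation above.
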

\begin{proof}
    Let $\rho=\frac{1}{2}(\alpha+1)(r+|c|)<\alpha(r+|c|)$, where, without loss of generality, we assume $c>0$. Define $\{x_{i}\}_{i=1}^{\nu}$ and $\{x_{i}\}_{i=1}^{\nu_{1}}$ that consist of all singularities of $f$ in $(r, r+c)$ and $(r, \rho)$ separately. Since $r>\frac{3-\alpha}{\alpha-1}c$, then $\frac{c}{\rho-r-c}\leq 1$, hence $0<\frac{c}{\rho-r}\leq\frac{c}{\rho-x_{i}}\leq\frac{c}{\rho-r-c}\leq 1$ for $i=1, \cdots, \nu$, and due to $r>2c$, we have $\tau_{f}^{(j)}(x_{i})=\omega_{f}^{(j)}(x_{i})$ for all $j=1, \cdots, n$ and $i=1, \cdots, \nu_{1}$. Then by applying Lemma \ref{lemma3.1} twice, we have
    \begin{eqnarray*}
        &&|f(r+c)-f(r)|=\left|\sum_{j=1}^{n}\frac{f^{(j)}(r^{+})}{j!}(r+c-r)^{j}-\sum_{j=1}^{n}\sum_{i=1}^{\nu}\omega_{f}^{(j)}(x_{i})(r+c-x_{i})^{j}\right|\\
        &\leq&\left|\sum_{j=1}^{n}\frac{f^{(j)}(r^{+})}{j!}(\rho- r)^{j}\left(\frac{c}{\rho- r}\right)^{j}\right|+\left|\sum_{j=1}^{n}\sum_{i=1}^{\nu}\omega_{f}^{(j)}(x_{i})(\rho-x_{i})^{j}\left(\frac{r+c-x_{i}}{\rho-x_{i}}\right)^{j}\right|\\
        &\leq&\frac{c}{\rho-r-c}\left(\left|\sum_{j=1}^{n}\frac{f^{(j)}(r^{+})}{j!}(\rho-r)^{j}\right|+\sum_{j=1}^{n}\sum_{i=1}^{\nu}|\omega_{f}^{(j)}(x_{i})|(\rho-x_{i})^{j}\right)\\
        &\leq&\frac{c}{\rho-r-c}\left(\left|f(\rho)-f(r)-\sum_{j=1}^{n}\sum_{i=1}^{\nu_{1}}\omega_{f}^{(j)}(x_{i})(\rho-x_{i})^{j}\right|\right)\\
        &&+\frac{2c}{\rho-r-c}\left(\sum_{j=1}^{n}\left(N_{(r, r+c)}^{(j)}(\rho, f)+N_{(r, r+c)}^{(j)}(\rho, -f)\right)\right)\\
        &\leq&\frac{c}{\rho-r-c}\left(|f(\rho)|+|f(x)|+2\sum_{j=1}^{n}\left(N_{(r, \rho)}^{(j)}(\rho, f)+N_{(r, \rho)}^{(j)}(\rho, -f)\right)\right)\\
        &&+\frac{2c}{\rho-r-c}\left(\sum_{j=1}^{n}\left(N_{(r, r+c)}^{(j)}(\rho, f)+N_{(r, r+c)}^{(j)}(\rho, -f)\right)\right)\\
        &\leq&\frac{2c}{\rho-r-c}(m(\rho, f)+m(\rho, -f)+m(r, f)+m(r, -f))\\
        &&+\frac{4c}{\rho-r-c}\sum_{j=1}^{n}\left(N^{(j)}(\rho, f)+N^{(j)}(\rho, -f)\right)\\
        &\leq&\frac{16c}{\rho-r-c}\left(T(\rho, f)+\frac{|f(0)|}{2}\right)\\
        &\leq&\frac{32c}{(\alpha-1)(r+c)}\left(T(\alpha(r+c), f)+\frac{|f(0)|}{2}\right).
    \end{eqnarray*}
    The second inequality holds because $f(x)$ is well defined, which implies $f^{(j)}(r^{+})$ is non-negative or non-positive for all $j=1, \cdots, n$. We can also do a similar analysis for $|f(-r+c)-f(-r)|$ and get the same inequality. Hence the assertion follows.
\end{proof}

\noindent\textbf{Remark}: Example 4.1 shows $T(r, f)$ may not be convex over $r\in(0, \infty)$ in the $n$-th setting, but several examples suggest us that $T(r, f)$ may be non-increasing outside of a set of finite measure, but we can not prove this yet. Another problem to discard the assumption that $f$ should be well defined is to prove Lemma \ref{lemma4.4}. It is worth noting that Lemma \ref{lemma3.1} is easier applied than the $n$-th Poisson-Jensen formula when we prove Lemma \ref{lemma4.4}.

Growth assumption is necessary to study the characteristic function with a shift operator, and the following lemma is frequently applied to prove various difference analogues of the logarithmic derivative lemma. Some generalizations of the Lemma can be seen in \cite{Thyp, Log}.

\begin{lemma}\label{lemma4.5}
    (\cite[Lemma 8.3]{Hyp}) Let $T:[0, +\infty)\rightarrow [0, +\infty)$ be a non-decreasing continuous function and let $c\in(0, \infty)$. If the hyper-order of $T$ is strictly less than one. i.e.,
    \begin{eqnarray}\label{4sup}
        \limsup\limits_{r\rightarrow \infty}\frac{\log\log T(r)}{\log r}:=\rho_{2}<1
    \end{eqnarray}
    and $\sigma\in(0, 1-\rho_{2})$, then
    \begin{eqnarray*}
        T(r+c)=T(r)+o\left(\frac{T(r)}{r^{\sigma}}\right)
    \end{eqnarray*}
    where $r$ runs to infinity outside of a set of finite logarithmic measure.
\end{lemma}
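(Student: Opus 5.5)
Since the statement is quoted from \cite[Lemma 8.3]{Hyp}, I would reprove it along the standard route of Halburd--Korhonen--Tohge: a growth comparison outside an exceptional set. Fix $\sigma\in(0,1-\rho_{2})$ and choose $\varepsilon>0$ with $\rho_{2}+\varepsilon<1-\sigma$, so that $\log T(r)\le r^{\rho_{2}+\varepsilon}$ for all large $r$. If $T$ is bounded, then monotonicity gives $T(r+c)-T(r)\to 0$. Since $T(r)\ge T(r_{0})>0$ eventually, this is $o(T(r)/r^{\sigma})$ only when that limit is reached fast enough. I would avoid this subtlety by treating bounded $T$ with the same exceptional-set argument below; alternatively, one may assume $T\to\infty$ as in the applications. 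So assume $T(r)\to\infty$.

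The core claim is that the set
\[
E=\left\{r\ge r_{0}:\ \frac{T(r+c)}{T(r)}\ge 1+\frac{1}{r^{\sigma'}}\right\}
\]
has finite logarithmic measure for some $\sigma'\in(\sigma,1-\rho_{2}-\varepsilon)$. Granting this, outside $E$ we get $T(r+c)-T(r)\le T(r)r^{-\sigma'}=o(T(r)/r^{\sigma})$, and monotonicity gives the lower bound $T(r+c)\ge T(r)$, which proves the lemma.

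To prove the claim, I would argue by contradiction via an iteration. Suppose $\int_{E}dr/r=\infty$. Define $r_{1}=\min E$ and $r_{k+1}=\min(E\cap[r_{k}+c,\infty))$; the minima exist because $E$ is closed by continuity of $T$. Then $E\subset\bigcup_{k}[r_{k},r_{k}+c]$, hence
\[
\infty=\int_{E}\frac{dr}{r}\le \sum_{k}\log\Big(1+\frac{c}{r_{k}}\Big)\le c\sum_{k}\frac{1}{r_{k}}.
\]
Along the chain, $T(r_{k+1})\ge T(r_{k}+c)\ge T(r_{k})(1+r_{k}^{-\sigma'})$, so
\[
\log T(r_{k+1})\ge \log T(r_{1})+\sum_{j\le k}\log\big(1+r_{j}^{-\sigma'}\big)\gtrsim \sum_{j\le k} r_{j}^{-\sigma'}.
\]

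The main obstacle is converting the divergence of $\sum 1/r_{k}$ into growth that contradicts $\log T(r)\le r^{\rho_{2}+\varepsilon}$. I would do this by dyadic blocking. Divergence of $\sum 1/r_{k}$ forces infinitely many dyadic blocks $[2^{m},2^{m+1})$ to contain at least $N_{m}$ of the $r_{k}$, where $N_{m}/2^{m}$ is not summable. Indeed, the contribution of block $m$ is comparable to $N_{m}2^{-m}$, and $N_{m}\le 2^{m}/c+1$ by the spacing of the $r_{k}$. Each such block adds at least $N_{m}2^{-(m+1)\sigma'}$ to $\log T(2^{m+1})$. Taking $N_{m}\ge 2^{m}/m^{2}$ for infinitely many $m$, which follows from non-summability of $N_{m}2^{-m}$, gives
\[
\log T(2^{m+1})\gtrsim 2^{m(1-\sigma')}/m^{2}
\]
along a sequence. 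Since $1-\sigma'>\rho_{2}+\varepsilon$, this contradicts the growth bound. This contradiction establishes the finite logarithmic measure of $E$ and completes the proof.
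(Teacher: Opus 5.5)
Your proof is correct. Note that the paper gives no proof of this lemma at all: it simply quotes it from \cite[Lemma 8.3]{Hyp}, so your proposal supplies an argument the paper omits. It follows the standard exceptional-set iteration used for lemmas of this type.
\begin{itemize}
\item You pass from $\sigma$ to $\sigma'\in(\sigma,1-\rho_2-\varepsilon)$ to obtain the little-$o$.
\item You cover $E$ by the intervals $[r_k,r_k+c)$, which forces $\sum 1/r_k=\infty$.
\item You iterate $T(r_{k+1})\ge T(r_k+c)\ge T(r_k)(1+r_k^{-\sigma'})$.
\item You extract growth through the dyadic count $N_m\ge 2^m/m^2$ for infinitely many $m$. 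This gives $\log T(R)\gtrsim R^{1-\sigma'}/(\log R)^2$ along a sequence $R\to\infty$, contradicting $\log T(R)\le R^{\rho_2+\varepsilon}$ because $1-\sigma'>\rho_2+\varepsilon$.
\end{itemize}
The dyadic blocking is one concrete way to turn $\sum 1/r_k=\infty$ into counting information. It is equivalent to observing that $r_k\le k^{1/\alpha}$ infinitely often for every $\alpha<1$.

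A few cleanups, none of which affects validity.
\begin{itemize}
\item The block-$m$ contribution bounds $\log T(2^{m+1}+c)$, not $\log T(2^{m+1})$. The product $T(r_1)\prod_{j\le k}(1+r_j^{-\sigma'})$ is dominated by $T(r_k+c)$. There is also a harmless constant factor from $\log(1+x)\ge x\log 2$ on $[0,1]$.
\item The spacing bound $N_m\le 2^m/c+1$ is never used.
\item Your digression on bounded $T$ and the reduction to $T\to\infty$ are unnecessary. The contradiction argument uses only two facts. First, $T(r)\ge T(r_0)>0$ for $r\ge r_0$, which gives closedness of $E$ and finiteness of $\log T(r_1)$. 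Second, the upper bound $\log T(r)\le r^{\rho_2+\varepsilon}$ for large $r$. Both hold for bounded $T$, so that case is already covered; only the trivial case $T\equiv 0$ needs separate mention.
\end{itemize}
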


By combining Lemma \ref{lemma4.4} and \ref{lemma4.5}, we can get following estimation. A similar proof as in Proposition 4.2 \cite{Secm} applies. For the convenience of the reader, however, we recall the proof briefly.

\begin{theorem}[\textbf{Pointwise $n$-th tropical logarithmic derivative estimate}]\label{theorem 4.6}
     Let $n\in\mathbb{N}$ and $c$ be a nonzero real number. If $f$ is a $n$-th well defined tropical meromorphic function on $\mathbb{R}$ with
    \begin{eqnarray}\label{4as}
        \rho_{2}(f):=\rho_{2}<1,
    \end{eqnarray}
    and $\sigma\in(0, 1-\rho_{2})$, then 
    \begin{eqnarray*}
        \left|f(\delta r+c)\oslash f(\delta r)\right|=o\left(\frac{T(r, f)}{r^{\sigma}}\right)
    \end{eqnarray*}
    where $\delta=\pm1$ and $r$ runs to infinity outside of a set of finite logarithmic measure.
\end{theorem}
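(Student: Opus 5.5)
The plan is to combine the pointwise bound of Lemma \ref{lemma4.4} with the growth lemma, Lemma \ref{lemma4.5}. The key point is that Lemma \ref{lemma4.4} can be applied with a parameter $\alpha$ depending on $r$ and tending to $1$.

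Fix $c\neq 0$ and $\delta=\pm1$. By Lemma \ref{lemma4.3}, $T(r,f)$ is non-negative, non-decreasing and continuous, because $f$ is well defined. Hence Lemma \ref{lemma4.5} applies to $T(r)=T(r,f)$, whose hyper-order is $\rho_2<1$. Choose $\sigma'$ with $\sigma<\sigma'<1-\rho_2$. In Lemma \ref{lemma4.4} take
\begin{equation*}
\alpha=\alpha(r)=1+\frac{1}{r+|c|}, \qquad\text{so that}\qquad \alpha(r+|c|)=r+|c|+1 .
\end{equation*}
The admissibility condition $r>\max\{2|c|,\frac{3-\alpha}{\alpha-1}|c|\}$ is uniform in $r$, since $\alpha$ is not fixed. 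One checks that $\frac{3-\alpha}{\alpha-1}|c|\approx 2|c|(r+|c|)$, which exceeds $r$. So this choice fails. I would therefore let $\alpha-1$ decay more slowly, say
\begin{equation*}
\alpha-1=\frac{s}{r+|c|}\quad\text{with } s=s(r)\to\infty \text{ and } s=o(r).
\end{equation*}
Then $\frac{3-\alpha}{\alpha-1}|c|\approx \frac{2|c|}{s}(r+|c|)<r$ for large $r$. The estimate becomes
\begin{equation*}
|f(\delta r+c)\oslash f(\delta r)|\le \frac{32|c|}{s}\Big(T(r+|c|+s,f)+\tfrac{|f(0)|}{2}\Big).
\end{equation*}

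Next I would control $T(r+|c|+s)$ by $T(r)$. Take $s=r^{\sigma'-\sigma}$, or any slowly growing power. Lemma \ref{lemma4.5} is stated with a constant shift $c$, so I would instead use its proof mechanism, or the generalizations cited from \cite{Thyp, Log}. These allow shifts of size $s(r)=o(r^{1-\rho_2-\varepsilon})$, and give $T(r+|c|+s)=(1+o(1))T(r)$ outside a set of finite logarithmic measure.

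The standard argument runs as follows. Let $E$ be the set where $T(r+h(r))\ge (1+r^{-\sigma''})T(r)$, with $h(r)=|c|+s(r)$. If $E$ had infinite logarithmic measure, iterating along a sequence $r_{k+1}=r_k+h(r_k)$ would force $\log T$ to grow like $\sum r_k^{-\sigma''}$. When $h$ is a power smaller than $r^{1-\rho_2-\sigma''}$, this makes the hyper-order at least $1-\ldots$, a contradiction. This is where I expect the main obstacle: the bookkeeping of exponents, so that $\sigma''$ and the power in $s$ are compatible with $\sigma<1-\rho_2$.

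Granting that bound, the right-hand side is $O(T(r,f)/s(r))$, up to the constant $|f(0)|$. Since $T$ is non-decreasing and non-constant, $T(r,f)$ is bounded below, so the constant is absorbed. Choosing $s(r)=r^{\sigma_1}$ with $\sigma<\sigma_1<1-\rho_2$ then gives $o(T(r,f)/r^{\sigma})$. Both signs $\delta=\pm1$ are handled identically, because Lemma \ref{lemma4.4} covers both cases. Finally, the union of the finitely many exceptional sets still has finite logarithmic measure.
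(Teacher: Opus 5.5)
Your route is correct but genuinely different from the paper's, and the growth step you flag as the obstacle does go through.

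\textbf{The paper's route.} The paper takes $\alpha=1+(\log T(r+|c|,f))^{-1-\varepsilon}$, so the extra radius is $(r+|c|)/(\log T(r+|c|,f))^{1+\varepsilon}$. It then:
\begin{itemize}
\item quotes the generalized Borel lemma (Lemma 3.3.1 of \cite{Ye}) to get $T(\alpha(r+|c|),f)\le C\,T(r+|c|,f)$;
\item uses the fixed-shift Lemma \ref{lemma4.5} to pass from $r+|c|$ to $r$;
\item compares $\frac{(\log T)^{1+\varepsilon}}{r+|c|}$ with $r^{-\sigma}$ via $\log T(r)\le r^{\rho_2+\varepsilon}$.
\end{itemize}

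\textbf{Your route.} You fix the extra radius explicitly as $s(r)=r^{\sigma_1}$ with $\sigma<\sigma_1<1-\rho_2$. You replace both quoted lemmas by a single variable-shift growth lemma with $h(r)=|c|+r^{\sigma_1}$. That lemma is not in the paper, so you must prove it. Your sketch works after two adjustments:
\begin{itemize}
\item Choose the sequence inside $E=\{r: T(r+h(r))\ge(1+r^{-\sigma''})T(r)\}$, i.e.\ $r_{k+1}=\min\bigl(E\cap[r_k+h(r_k),\infty)\bigr)$, rather than $r_{k+1}=r_k+h(r_k)$, which need not lie in $E$.
\item Fix $\sigma''>0$ with $\sigma_1+\sigma''<1-\rho_2$.
\end{itemize}
Then infinite logarithmic measure of $E$ forces $\sum_k r_k^{\sigma_1-1}=\infty$. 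On the other hand, $S_n=\sum_{k<n}r_k^{-\sigma''}\lesssim\log T(r_n)\le r_n^{\rho_2+\varepsilon}$. This gives $r_k^{\sigma_1-1}=r_k^{-\sigma''}r_k^{\sigma_1+\sigma''-1}\lesssim(S_{k+1}-S_k)S_k^{-p}$ with $p=\frac{1-\sigma_1-\sigma''}{\rho_2+\varepsilon}>1$ for small $\varepsilon$. That sum converges, a contradiction.

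\textbf{Comparison.} Your version gives an explicit, $T$-independent $\alpha$ and one self-contained Borel-type argument. It also yields the slightly sharper bound $O(T(r,f)/r^{\sigma_1})$ and makes the final $o(\cdot)$ transparent; the paper's $\frac{(\log T)^{1+\varepsilon}}{r+|c|}=o(r^{-\sigma})$ step is really only $O$ as written. The paper's version is shorter because it only quotes existing lemmas.

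One correction: your intermediate choice $s=r^{\sigma'-\sigma}$ would not suffice. Only the final choice $s=r^{\sigma_1}$ with $\sigma_1>\sigma$ gives $o(T(r,f)/r^{\sigma})$.
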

\begin{proof}
    For any $\varepsilon>0$, let
    \begin{eqnarray*}
        \alpha:=1+\frac{1}{(\log T(r+|c|, f))^{1+\varepsilon}}.
    \end{eqnarray*}
    Then by applying the generalized Borel Lemma as given in Lemma 3.3.1 \cite{Ye} and Lemma \ref{lemma4.5}, we have $T(\alpha(r+|c|))\leq CT(r+|c|, f)=CT(r, f)+o(T(r, f)/r^{\sigma})$ for a positive constant $C$ and all $r$ outside of a set of finite logarithmic measure. 
    
    We may now fix $\varepsilon>0$ such that $\sigma=1-(\rho_{2}+\varepsilon)(1+\varepsilon)$. Then under the assumption (\ref{4as}), we get
    \begin{eqnarray*}
        \frac{1}{(\alpha-1)(r+|c|)}=\frac{(\log T(r+|c|, f))^{1+\varepsilon}}{r+|c|}=o\left(\frac{1}{(r+|c|)^{\sigma}}\right), \quad (r\rightarrow +\infty),
    \end{eqnarray*}
    and hence
    \begin{eqnarray*}
        \frac{(3-\alpha)|c|}{r(\alpha-1)}=\frac{1}{(\alpha-1)(r+|c|)}\frac{(r+|c|)(3-\alpha)|c|}{r}=o\left(\frac{1}{(r+|c|)^{\sigma}}\right), \quad (r\rightarrow +\infty)
    \end{eqnarray*}
    which implies $r\geq \max\left\{ 2|c|, \frac{3-\alpha}{\alpha-1}|c|\right\}$ when $r$ is large enough. Therefore, Lemma~\ref{lemma4.4} yields
    \begin{eqnarray*}
        |f(\delta r+c)\oslash f(\delta r)|&\leq& 32|c|o\left(\frac{1}{(r+|c|)^{\sigma}}\right)\left(CT(r+|c|, f)+\frac{|f(0)|}{2}\right)\\
        &=&o\left(\frac{T(r, f)}{r^{\sigma}}\right)
    \end{eqnarray*}
    as $r$ runs to infinity outside of a set of finite logarithmic measure.
\end{proof}
Then the following $n$-th tropical version of logarithmic derivative lemma is a corollary of the Theorem \ref{theorem 4.6}.

\begin{corollary}[\textbf{$n$-th tropical version of logarithmic derivative lemma}]\label{cor4.7}
     Let $n\in\mathbb{N}$ and $c$ be a nonzero real number. If $f$ is a well defined $n$-th tropical meromorphic function on $\mathbb{R}$ with
    \begin{eqnarray*}
        \rho_{2}(f):=\rho_{2}<1,
    \end{eqnarray*}
    and $\sigma\in(0, 1-\rho_{2})$, then 
    \begin{eqnarray*}
        m(r, f(x+c)\oslash f(x))=o\left(\frac{T(r, f)}{r^{\sigma}}\right)
    \end{eqnarray*}
    where $r$ runs to infinity outside of a set of finite logarithmic measure.
\end{corollary}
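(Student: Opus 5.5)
The plan is to derive Corollary \ref{cor4.7} directly from the pointwise estimate in Theorem \ref{theorem 4.6}, using only the definition of the $n$-th max-plus proximity function. Set $g(x):=f(x+c)\oslash f(x)=f(x+c)-f(x)$. By definition,
\begin{equation*}
m(r, g)=\frac{g^{+}(r)+g^{+}(-r)}{2}.
\end{equation*}
Since $0\le g^{+}(x)\le |g(x)|$ for every $x$, we get
\begin{equation*}
m(r,g)\le \frac{1}{2}\left(|f(r+c)\oslash f(r)|+|f(-r+c)\oslash f(-r)|\right).
\end{equation*}
Note that $g$ itself need not be well defined or even meromorphic of the required type. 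This does not matter, because only the two pointwise values at $\pm r$ enter $m(r,g)$.

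Next I apply Theorem \ref{theorem 4.6} twice, once with $\delta=1$ and once with $\delta=-1$. Both applications use the same $c$ and the same $\sigma\in(0,1-\rho_2)$, and the hypotheses on $f$ (well defined, $\rho_2(f)<1$) are exactly those of the corollary. Each application gives
\begin{equation*}
|f(\delta r+c)\oslash f(\delta r)|=o\left(\frac{T(r,f)}{r^{\sigma}}\right)
\end{equation*}
for $r$ outside an exceptional set $E_\delta$ of finite logarithmic measure. Summing the two bounds and dividing by $2$ yields $m(r,g)=o(T(r,f)/r^{\sigma})$ for $r\notin E_{1}\cup E_{-1}$.

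The only point needing care is the exceptional set. The union of two sets of finite logarithmic measure still has finite logarithmic measure, since $\int_{E_1\cup E_{-1}}dt/t\le\int_{E_1}dt/t+\int_{E_{-1}}dt/t<\infty$. In fact, in the proof of Theorem \ref{theorem 4.6} the exceptional set comes from the Borel-type lemma and Lemma \ref{lemma4.5} applied to $T(r,f)$, which does not involve $\delta$, so one may even take $E_1=E_{-1}$.

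I do not expect a genuine obstacle, since the corollary is an immediate consequence of the two-sided pointwise bound. The one thing to check is that the $o(\cdot)$ statements are uniform in the sense needed for adding them, and this is automatic for two summands.
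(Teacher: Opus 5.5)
Your proposal is correct and matches the paper's proof: bound $m(r, f(x+c)\oslash f(x))$ by $\frac{1}{2}\bigl(|f(r+c)-f(r)|+|f(-r+c)-f(-r)|\bigr)$ using $g^{+}\le |g|$, then apply Theorem~\ref{theorem 4.6} with $\delta=\pm 1$. Your remark that the union of the two exceptional sets still has finite logarithmic measure makes explicit a point the paper leaves implicit.
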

\begin{proof}
    Since
    \begin{eqnarray*}
        m\left(r, f(x+c)\oslash f(x)\right)&=&\frac{(f(r+c)-f(r))^{+}+(f(-r+c)-f(-r))^{+}}{2}\\
        &\leq& \frac{|f(r+c)-f(r)|+|f(-r+c)-f(-r)|}{2},
    \end{eqnarray*}
    then we can immediately get the conclusion by applying Theorem \ref{theorem 4.6}.
\end{proof}

\section{$n$-th tropical holomorphic curve}
Now we extend some notations of the $n$-th tropical setting to the tropical projective space. Simply speaking, the tropical space $\mathbb{T}\mathbb{P}^{m}$ ($m\in\mathbb{N}$) is given by a equivalence relation $\sim$ in $\mathbb{R}_{\max}^{m+1}\slash \{0_{0}\}$ which is denoted by $(a_{0}, \cdots, a_{m})\sim(b_{0}, \cdots, b_{m})$ if and only if $(a_{0}, \cdots, a_{m})=\lambda\otimes(b_{0}, \cdots, b_{m})$ for some $\lambda\in\mathbb{R}$, and $[a_{0}: \cdots: a_{m}]$ is denoted as the equivalence class of $(a_{0}, \cdots, a_{m})$. Letting $n\in\mathbb{N}$, we denote
\begin{equation*}
        f=[f_{0}: \cdots: f_{m}]:\mathbb{R}\rightarrow \mathbb{T}\mathbb{P}^{m}
    \end{equation*}
    as a \textbf{$n$-th tropical holomorphic map (or curve)} where $f_{0}, \cdots, f_{m}$ are $n_{i}$-th tropical entire functions with $\max_{0\leq i\leq m}\{n_{i}\}=n$ and $\{n_{i}\}\subset \mathbb{N}\cup\{0\}$. Furthermore, we say $\textbf{f}=(f_{0}, \cdots, f_{m})\in [f_{0}: \cdots :f_{m}]$ is a reduced representation of $f$ if $f_{0}, \cdots, f_{m}$ do not have any $j$-th roots which are common to all of them.

 We also follow the definition of the classical tropical Cartan characteristic function of $f$.
    \begin{definition}
        \cite[Definition 4.2]{Pro} Let $n, m\in\mathbb{N}$ and $f:\mathbb{R}\rightarrow \mathbb{T}\mathbb{P}^{m}$ be a $n$-th tropical holomorphic curve with a reduced representation $\textbf{f}=(f_{0}, \cdots, f_{m})$, then
        \begin{eqnarray*}
            T_{\textbf{f}}(r)=\frac{1}{2}(F(r)+F(-r))-F(0),\quad F(x)=\max\limits_{0\leq i\leq m}\{f_{i}(x)\},
        \end{eqnarray*}
        is said to be the tropical Cartan characteristic function of $f$.
    \end{definition}

Similarly with the Proposition 4.3, 4.4 and Lemma 4.7 in \cite{Pro}, we have following three propositions.
\begin{proposition}\label{p5.2}
    The tropical Cartan charateristic function $T_{\textbf{f}}(r)$ is independent of the reduced representation of the $n$-th tropical holomorphic curve $f$.
\end{proposition}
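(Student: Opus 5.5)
The plan is to follow the classical argument of \cite[Proposition 4.3]{Pro}: two reduced representations differ by a tropical multiplication, and $T_{\textbf{f}}$ is insensitive to that factor. Let $\textbf{f}=(f_{0},\cdots,f_{m})$ and $\textbf{g}=(g_{0},\cdots,g_{m})$ be two reduced representations of the same $n$-th tropical holomorphic curve $f$.

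\textbf{Step 1 (a common shift function).} Since $[f_{0}(x):\cdots:f_{m}(x)]=[g_{0}(x):\cdots:g_{m}(x)]$ for every $x$, there is a function $h$ on $\mathbb{R}$ with $g_{i}=h\otimes f_{i}=h+f_{i}$ for all $i$. Wherever some $f_{i}$ is finite we have $h=g_{i}-f_{i}$, so $h$ is a difference of $n_{i}$-th tropical entire functions. Hence $h$ is a continuous piecewise polynomial function of degree at most $n$, that is, a tropical meromorphic function in the sense of this paper, possibly constant.

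\textbf{Step 2 (no singularities of $h$).} Next I will show that $\omega_{h}^{(j)}(x)=0$ for every $x$ and every $j$. The multiplicity map $\omega^{(j)}$ from (\ref{2a2}) is additive, so $\omega_{g_{i}}^{(j)}(x)=\omega_{h}^{(j)}(x)+\omega_{f_{i}}^{(j)}(x)$.
\begin{itemize}
\item If $\omega_{h}^{(j)}(x)<0$ at some point, then $\omega_{f_{i}}^{(j)}(x)\ge 0$ for all $i$, because $f_{i}$ is entire. Since $\textbf{f}$ is reduced, some $f_{k}$ has $\omega_{f_{k}}^{(j)}(x)=0$. Then $\omega_{g_{k}}^{(j)}(x)<0$, so $x$ is a $j$-th pole of the entire function $g_{k}$, which is impossible.
\item If $\omega_{h}^{(j)}(x)>0$, then $\omega_{g_{i}}^{(j)}(x)>0$ for all $i$. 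So every $g_{i}$ has a $j$-th root at $x$, contradicting the reducedness of $\textbf{g}$.
\end{itemize}
Thus $h$ is an $n$-th tropical nowhere vanishing entire function (or a constant).

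\textbf{Step 3 (conclusion).} Apply Proposition \ref{p2.2} in its equality case: for every $r>0$,
\[\tfrac{1}{2}\bigl(h(r)+h(-r)\bigr)=h(0).\]
Writing $G=\max_{i}g_{i}=h+F$, we get
\[T_{\textbf{g}}(r)=\tfrac{1}{2}\bigl(F(r)+F(-r)\bigr)-F(0)+\tfrac{1}{2}\bigl(h(r)+h(-r)\bigr)-h(0)=T_{\textbf{f}}(r).\]

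\textbf{Main obstacle.} Step 2 is the delicate point. In the $n$-th setting a point can carry roots for some $j$ and poles for other $j$, and at $x=0$ the even-$j$ multiplicities involve $f^{(j)}(0^{+})+f^{(j)}(0^{-})$. I must check that the additivity $\omega_{h+f}^{(j)}=\omega_{h}^{(j)}+\omega_{f}^{(j)}$ holds for each fixed $j$ separately; it does, since (\ref{2a2}) is linear in $f$. I also need reducedness to be read as "for each $j$, there is no common $j$-th root." If the definition were instead meant for all $j$ jointly, I would redo the sign argument componentwise in $j$, using that entire functions have $\omega^{(j)}\ge 0$ for every $j$.

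A further subtlety is the case where some $f_{i}\equiv-\infty$. This only affects which indices $k$ are available in Step 2. The reduced condition will be interpreted over the coordinates that are not identically $-\infty$, which is consistent with $\mathbb{T}\mathbb{P}^{m}$ excluding the all-$0_{0}$ vector.
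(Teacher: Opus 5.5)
Your proposal is correct and is essentially the paper's proof. You write the two reduced representations as differing by a common function $h$ (the paper's $\lambda$), use additivity of $\omega^{(j)}$ together with entireness and reducedness to rule out any $j$-th root or pole of $h$, and conclude with the equality case of Proposition \ref{p2.2}. Your caveat about coordinates identically equal to $-\infty$ is unnecessary, since in the paper the $f_{i}$ are real-valued piecewise polynomial functions.
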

\begin{proof}
    For any $n$-th tropical holomorphic curve $f:\mathbb{R}\rightarrow\mathbb{T}\mathbb{P}^{m}$, let $\textbf{f}=(f_{0}, \cdots, f_{m})$ and $\textbf{g}=(g_{0}, \cdots, g_{m})$ be two arbitrary reduced representations of $f$, then we can find a tropical function $\lambda(x)$ such that $f_{i}(x)=g_{i}(x)+\lambda(x)$ for any $x\in\mathbb{R}$ and $i=0, \cdots, m$. Since $\omega_{f_{i}}^{(j)}(x)\geq 0$ and $\omega_{g_{i}}^{(j)}(x)\geq 0$ for all $x\in\mathbb{R}$, $i=0,\cdots,m$ and $j=1, \cdots, n$, if $\lambda(x)$ has a $j_{0}$-th root or pole at $x_{0}\in\mathbb{R}$ for some $j_{0}\in\{1, \cdots, n\}$, that is,  $\omega_{\lambda}^{(j_{0})}(x_{0})>0$ or $<0$, then
    \begin{eqnarray*}
        \omega_{f_{i}}^{(j_{0})}(x_{0})=\omega_{g_{i}}^{(j_{0})}(x_{0})+\omega_{\lambda}^{(j_{0})}(x_{0})>0
    \end{eqnarray*}
    or
    \begin{eqnarray*}
        \omega_{g_{i}}^{(j_{0})}(x_{0})=\omega_{f_{i}}^{(j_{0})}(x_{0})-\omega_{\lambda}^{(j_{0})}(x_{0})>0
    \end{eqnarray*}
    for all $i=0, \cdots, m$. But this means that $z_{0}$ is a $j_{0}$-th root for all $f_{i}$ or $g_{i}$ which is a contradiction. Hence, $\lambda(x)$ is a tropical function without any singularities. Then by Proposition \ref{p2.2}, we have $(\lambda(r)+\lambda(-r))/2=\lambda(0)$, and then $T_{\textbf{f}}(r)=T_{\textbf{g}}(r)$.
\end{proof}
Thus we can adopt the notation $T_{f}(r)$ instead of $T_{\textbf{f}}(r)$ just like in the classical setting.
\begin{proposition}\label{p5.3}
    Let $n\in\mathbb{N}$ and $g=g_{1}\oslash g_{0}$ be a $n$-th tropical meromorphic function, where $g_{0}$ and $g_{1}$ are tropical entire and do not have any common $j$-th roots, and define $f=[g_{0}: g_{1}]$. Then
    \begin{eqnarray*}
        T_{f}(r)=T(r, g)-g^{+}(0).
    \end{eqnarray*}
\end{proposition}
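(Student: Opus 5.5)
The plan is to compute both sides directly from the definitions, using the identity $\max\{g_0,g_1\}=g_0+g^{+}$, where $g=g_1-g_0$. Indeed, $F(x)=\max\{g_0(x),g_1(x)\}=g_0(x)+\max\{0,g(x)\}=g_0(x)+g^{+}(x)$. Substituting this into the definition of the Cartan characteristic gives
\begin{eqnarray*}
T_f(r)&=&\frac{g^{+}(r)+g^{+}(-r)}{2}-g^{+}(0)+\left(\frac{g_0(r)+g_0(-r)}{2}-g_0(0)\right)\\
&=&m(r,g)-g^{+}(0)+\left(\frac{g_0(r)+g_0(-r)}{2}-g_0(0)\right).
\end{eqnarray*}
It therefore suffices to show that
\begin{equation*}
\frac{g_0(r)+g_0(-r)}{2}-g_0(0)=\sum_{j=1}^{n}N^{(j)}(r,g).
\end{equation*}

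For this I will apply the $n$-th Jensen formula (\ref{3j}) to $g_0$. Since $g_0$ is tropical entire, it has no poles, so
\begin{equation*}
\frac{g_0(r)+g_0(-r)}{2}-g_0(0)=\frac{1}{2}\sum_{j=1}^{n}\sum_{|x_i|<r}\omega_{g_0}^{(j)}(x_i)(r-|x_i|)^j,
\end{equation*}
with every $\omega_{g_0}^{(j)}\ge 0$.

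Next, $\omega^{(j)}$ is linear in the function, because definition (\ref{2a2}) is linear in $f^{(j)}(x^{\pm})$. Hence
\begin{equation*}
\omega_g^{(j)}=\omega_{g_1}^{(j)}-\omega_{g_0}^{(j)}.
\end{equation*}
The key observation is that the hypothesis of no common $j$-th roots means that at each point $x$ and for each $j$, at most one of $\omega_{g_0}^{(j)}(x)$ and $\omega_{g_1}^{(j)}(x)$ is positive, and both are nonnegative. Consequently $\omega_g^{(j)}(x)<0$ exactly when $\omega_{g_0}^{(j)}(x)>0$, and in that case $|\omega_g^{(j)}(x)|=\omega_{g_0}^{(j)}(x)$. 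So the $j$-th poles of $g$ are precisely the $j$-th roots of $g_0$, counted with the same multiplicities, and the Jensen sum above equals $\sum_{j}N^{(j)}(r,g)$ by definition (\ref{3aN}).

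Combining the two displays yields $T_f(r)=m(r,g)+\sum_j N^{(j)}(r,g)-g^{+}(0)=T(r,g)-g^{+}(0)$. The only delicate point is the multiplicity bookkeeping at $x=0$, where $\omega^{(j)}$ involves the sign convention $sgn^{j+1}(0^{\pm})$. Since that convention is applied identically to $g_0$, $g_1$ and $g$, linearity still holds there, so I expect no real obstacle beyond checking this carefully.
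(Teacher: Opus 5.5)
Your proposal is correct and follows essentially the same route as the paper: write $F=g_0+g^{+}$, apply the $n$-th Jensen formula to the entire function $g_0$, and identify the $j$-th poles of $g$ with the $j$-th roots of $g_0$ using the no-common-roots hypothesis. You justify that last identification more explicitly than the paper does, via linearity of $\omega^{(j)}$; the paper passes from $\sum_j N^{(j)}(r,-g_0)$ to $\sum_j N^{(j)}(r,g_1-g_0)$ without comment.
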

\begin{proof}
Since $F(x)=\max\{g_{0}(x), g_{1}(x)\}=(g_{1}- g_{0})^{+}(x)+g_{0}(x)$, it follows by applying the $n$-th Jensen formula that
\begin{eqnarray*}
    T_{f}(r)&=&\frac{1}{2}\sum_{\delta=\pm 1}\left((g_{1}- g_{0})^{+}(\delta r)+g_{0}(\delta r)\right)-(g_{1}- g_{0})^{+}(0)-g_{0}(0)\\
    &=&m(r, g_{1}- g_{0})+m(r, g_{0})-m(r, - g_{0})-(g_{1}- g_{0})^{+}(0)-g_{0}(0)\\
    &=&m(r, g_{1}- g_{0})+\sum_{j=1}^{n}\left(N^{(j)}(r, - g_{0})-N^{(j)}(r, g_{0})\right)-(g_{1}- g_{0})^{+}(0)\\
    &=&m(r, g_{1}- g_{0})+\sum_{j=1}^{n}N^{(j)}(r, g_{1}- g_{0})-(g_{1}- g_{0})^{+}(0)\\
    &=&T(r, g_{1}- g_{0})-(g_{1}- g_{0})^{+}(0).
\end{eqnarray*}
\end{proof}

\begin{proposition}\label{p5.4}
Let $n, m\in\mathbb{N}$ and $f:\mathbb{R}\rightarrow \mathbb{T}\mathbb{P}^{m}$ be a non-constant $n$-th tropical holomorphic curve with a reduced representation $\textbf{f}=(f_{0}, \cdots, f_{m})$. Then
    \begin{eqnarray}\label{div}
        T\left(r, \frac{f_{i}}{f_{l}}\oslash\right)\leq T_{f}(r)+O(1),
    \end{eqnarray}
    for any $i, l\in\{0, \cdots, m\}$.
\end{proposition}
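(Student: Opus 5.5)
Fix $i,l$ and set $g=f_{i}-f_{l}$, which is an $n'$-th tropical meromorphic function with $n'\le n$ (if $g$ is constant the claim is trivial). The plan is to estimate the proximity part and the counting part of $T(r,g)$ separately, using only the Jensen formula and the fact that every $f_k$ is $n_k$-th tropical entire, so that $\omega_{f_k}^{(j)}\ge 0$.

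\emph{Proximity.} Write $F=\max_k f_k$. Since $F\ge f_i$, we have $g^{+}=(f_i-f_l)^{+}\le F-f_l$ pointwise: if $f_i\ge f_l$ then $g^{+}=f_i-f_l\le F-f_l$, and otherwise $g^{+}=0\le F-f_l$. Hence
$$m(r,g)\le \frac{1}{2}\big(F(r)+F(-r)\big)-\frac{1}{2}\big(f_l(r)+f_l(-r)\big)=T_f(r)+F(0)-\frac{1}{2}\big(f_l(r)+f_l(-r)\big).$$

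\emph{Counting.} From the definition (\ref{2a2}), $\omega^{(j)}$ is additive, so $\omega_g^{(j)}=\omega_{f_i}^{(j)}-\omega_{f_l}^{(j)}$. A $j$-th pole of $g$ at $x$ means $\omega_g^{(j)}(x)<0$, and then $|\omega_g^{(j)}(x)|\le\omega_{f_l}^{(j)}(x)$ because $\omega_{f_i}^{(j)}(x)\ge0$. Therefore
$$N^{(j)}(r,g)\le\frac{1}{2}\sum_{|x|<r}\omega_{f_l}^{(j)}(x)(r-|x|)^j=N^{(j)}(r,1_0\oslash f_l).$$
Summing over $j$ and applying the $n$-th Jensen formula (\ref{3j}) to $f_l$, which has no poles, gives
$$\sum_j N^{(j)}(r,g)\le \frac{1}{2}\big(f_l(r)+f_l(-r)\big)-f_l(0).$$

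\emph{Conclusion.} Adding the two estimates, the terms $\frac{1}{2}(f_l(r)+f_l(-r))$ cancel, and we obtain
$$T(r,g)\le T_f(r)+F(0)-f_l(0)=T_f(r)+O(1).$$
This is exactly (\ref{div}). Note that the reduced-representation hypothesis is not even needed here; it only enters through Proposition \ref{p5.2}, which makes $T_f$ well defined.

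The one step that needs care is the pole bound: one must check that a point counted as a $j$-th pole of $g$ can only arise where $f_l$ has a $j$-th root, with multiplicity at least as large. The sign conventions at $x=0$ (via $sgn^{j+1}(0^{\pm})$) enter identically for $f_i$ and $f_l$, so linearity of $\omega^{(j)}$ still holds there. The rest is direct.
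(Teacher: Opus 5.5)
Your proof is correct, but it takes a different and more elementary route than the paper.

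\textbf{The paper's route.} The paper first removes common roots. It posits a tropical entire $u$ with $\omega_u^{(j)}=\min\{\omega_{f_i}^{(j)},\omega_{f_l}^{(j)}\}$ and $u(0)=f_l(0)$, and passes to the pair $h_i=f_i-u$, $h_l=f_l-u$, which has no common roots. It then uses Proposition \ref{p5.3} to write $T(r,f_i\oslash f_l)=\frac12\sum_{\delta=\pm1}\max\{h_i,h_l\}(\delta r)+O(1)$. Finally it bounds $\max\{f_i,f_l\}$ by $F$ and disposes of $-\frac12\sum_\delta u(\delta r)$ via Proposition \ref{p2.2}.

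\textbf{Your route.} You estimate the two parts of $T(r,g)$ separately. For the proximity term you use $g^{+}\le F-f_l$. For the counting term you use additivity of $\omega^{(j)}$ together with $\omega_{f_i}^{(j)}\ge 0$. The Jensen formula for $f_l$ then makes the $f_l(\pm r)$ terms cancel.

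\textbf{What your route buys.} It avoids having to justify that $u$ exists, which the paper takes for granted; existence would follow from the recursive construction in the proof of Proposition \ref{2p2.3}. It does not need Proposition \ref{p5.3} at all. It gives the explicit constant $F(0)-f_l(0)$. It also shows clearly that neither reducedness nor non-constancy of $f$ is used.

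\textbf{What the paper's route buys.} It yields an exact intermediate identity,
$T(r,f_i\oslash f_l)=\frac12\sum_{\delta=\pm1}\max\{f_i,f_l\}(\delta r)-\sum_{j}N^{(j)}(r,1_0\oslash u)+O(1)$.
This keeps the contribution of the common roots of $f_i$ and $f_l$, which your pole bound throws away. Indeed, one checks that $N^{(j)}(r,g)=N^{(j)}(r,1_0\oslash f_l)-N^{(j)}(r,1_0\oslash u)$ exactly. That extra precision is not needed for (\ref{div}).
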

\begin{proof} 
   Assume $u(x)$ is a $n_{1}$-th ($n_{1}\leq n$) tropical entire function that satisfies 
   \begin{eqnarray}\label{5a111}
       \omega_{u}^{(j)}(x)=\min\{\omega_{f_{i}}^{(j)}(x), \omega_{f_{l}}^{(j)}(x)\}
   \end{eqnarray}
   for all $x\in\mathbb{R}$ and $j=1, \cdots, n$, and $u(0)=f_{l}(0)$. 
   Letting $h_{i}(x):=f_{i}(x)-u(x)$ and $h_{l}(x)=f_{l}(x)-u(x)$, it follows that $h_{i}(x)$ and $h_{l}(x)$ do not have any common $j$-th roots. Otherwise, say $\omega_{h_{i}}^{(j)}(x_{0})>0$ and $\omega_{h_{l}}^{(j)}(x_{0})>0$ hold simultaneously for some $j=1, \cdots, n$ and $x_{0}\in\mathbb{R}$. Then $\omega_{u}^{(j)}(x_{0})=\omega_{f_{i}}^{(j)}(x_{0})-\omega_{h_{i}}^{(j)}(x_{0})=\omega_{f_{l}}^{(j)}(x_{0})-\omega_{h_{l}}^{(j)}(x_{0})<\min\{\omega_{f_{i}}^{(j)}(x_{0}), \omega_{f_{l}}^{(j)}(x_{0})\}$ which contradicts (\ref{5a111}), hence, from Proposition \ref{p5.3}, we have
    \begin{eqnarray*}
        T\left(r, \frac{f_{i}}{f_{l}}\oslash\right)&=&T\left(r, \frac{h_{i}}{h_{l}}\oslash\right)\\
        &=&\frac{1}{2}\sum_{\delta=\pm 1}\max\{h_{i}(\delta r), h_{l}(\delta r)\}+O(1)\\
        &=&\frac{1}{2}\sum_{\delta=\pm 1}\left(\max\{f_{i}(\delta r), f_{l}(\delta r)\}-u(\delta r)\right)+O(1)\\
        &\leq&\frac{1}{2}\sum_{\delta=\pm 1}\left(\max\limits_{0\leq k\leq m}\{f_{k}(\delta r)\}-u(\delta r)\right)+O(1)\\
        &\leq&T_{f}(r)-\frac{1}{2}\sum_{\delta=\pm 1}u(\delta r)+O(1)\\
        &\leq&T_{f}(r)+O(1).
    \end{eqnarray*}
    The last inequality follows from Proposition \ref{p2.2} with the fact that $u(x)$ is a tropical entire function. Hence the proof is complete.
\end{proof}

A homogeneous tropical polynomial of degree $d$ in $\mathbb{T}\mathbb{P}^{m}$ is defined by
\begin{eqnarray}
    P(x)=\bigoplus_{I_{i}\in \mathcal{J}_{d}}\alpha_{I_{i}}\otimes x^{I_{i}}=\bigoplus_{\sum_{k=0}^{m}i_{k}=d}\alpha_{i_{0}, \cdots, i_{m}}\otimes\bigotimes_{k=0}^{m} x_{i}^{\otimes i_{k}}
\end{eqnarray}
where $x=(x_{0}, \cdots, x_{m})$, $\mathcal{J}_{d}$ is the set of all $m+1$ tuples $I_{i}=(i_{0}, \cdots, i_{m})\in\mathbb{N}_{0}^{n+1}$ with $\sum_{k=0}^{m}i_{k}=d$, and $\alpha_{I_{i}}\in \mathbb{R}\cup\{-\infty\}$ such that not all are equal to $0_{0}$ \cite{Thyp}. Now we can let $P(x)$ act on the holomorphic curve $f\in$ $\mathbb{T}\mathbb{P}^{m}$, and get a composition function
\begin{eqnarray*}
P\circ f=\bigoplus_{\sum_{k=0}^{m}i_{k}=d}\alpha_{i_{0}, \cdots, i_{m}}\otimes\bigotimes_{k=0}^{m} f_{i}^{\otimes i_{k}}.
\end{eqnarray*}

In the classical setting, $P\circ f$ is always a tropical entire function, that is $N(r, P\circ f)\equiv 0$, which implies that $T_{f}(r)$ can be dominated only by $N(r, 1_{0}\oslash P\circ f)$ in the tropical second main theorem, such as \cite[Theorem 5.2]{Secm}, \cite[Theorem 4.8]{Thyp} and \cite[Corollary 3.5]{Tnp}. However, this may not hold in the general $n$-th setting and it is even possible to get $T_{f}(r)=o(\sum_{j=1}^{n}N^{(j)}(r, P\circ f))$, as we can see from the following example.

\begin{exa}
    Let $P(x)=x_{0}\oplus x_{1}$ and $f=[f_{0}: f_{1}]$ be a $2$-nd holomorphic curve where $f_{0}$ and $f_{1}$ are defined as $f_{0}(x)=sgn(x)x^{2}$ and
    \begin{eqnarray*}
         f_{1}(x)&=&\left\{\begin{array}{ll}
         -\frac{1}{2}, & x\leq 0,\\
         (4k+2)x-2k(2k+2)-\frac{1}{2}, & x\in[2k, 2k+2),
         \end{array}\right.\\
         &=&\left\{\begin{array}{ll}
         -\frac{1}{2}, & x\leq 0,\\
         (4\left\lfloor \frac{x}{2}\right\rfloor+2)x-2\left\lfloor \frac{x}{2}\right\rfloor(2\left\lfloor \frac{x}{2}\right\rfloor+2)-\frac{1}{2}, & x>0,
         \end{array}\right.
    \end{eqnarray*}
    where $k\in\mathbb{N}\cup\{0\}$. It is clear that $T_{f}(r)=\frac{1}{2}r^{2}(1+o(1))$. Besides, when $x>0$, $f_{0}(x)=f_{1}(x)$ gives $x=2k+1\pm\frac{\sqrt{2}}{2}\in[2k, 2k+2)$, and then we can get $x=2k+1-\frac{\sqrt{2}}{2}$, $k\in\mathbb{N}\cup\{0\}$ are all $P\circ f$'s $2$-nd poles with multiplicities $1$. Hence
    \begin{eqnarray*}
        \sum_{j=1}^{2}N^{(j)}(r, P\circ f)&\geq&\frac{1}{2}\sum_{i=1}^{\lfloor\frac{r}{2}\rfloor}(r-2i)^{2}\geq 2\sum_{i=1}^{\lfloor\frac{r}{2}\rfloor}\left(\left\lfloor\frac{r}{2}\right\rfloor-i\right)^{2}=\frac{r^{3}}{12}(1+o(1)),
    \end{eqnarray*}
    from which we can see that $T_{f}(r)=o\left(\sum_{j=1}^{2}N^{(j)}(r, P\circ f)\right)$.
\end{exa}

In the classical tropical setting, the first author and Tohge applied the notion of the linear independence in Gondran-Minoux sense due to Gondran and Minoux \cite{Gon1, Gon2} for some tropical linear functions, and then defined the completeness of the classical tropical linear combinations \cite{Pro}, and, with the definition's help, a new notation of the degree of degeneracy $\lambda=ddg(Q)$ was given for some tropical linear combinations $Q$. Lastly, a tropical analogues of Nochka's extension of Cartan's second main theorem was proved. In this theorem, the functions $g_{0}, \cdots, g_{m}$ should be linearly independent, and the conclusion includes $\lambda$ over the semi-ring $\mathbb{R}_{\max}=\mathbb{R}\cup \{-\infty\}$ \cite[Theorem 6.6]{Pro}. 

Subsequently, In \cite{Thyp}, Cao and Zheng applied the concepts of algebraic (respectively linear) independence in the Gondran-Minoux sense and tropical algebraic (respectively linear) nondegeneracy for some tropical holomorphic curves, and then proved the second main theorem with tropical hypersurfaces over $\mathbb{R}_{\max}$. There also is a condition that the holomorphic curve should be tropically algebraically nondegenerate, and the conclusion also includes $\lambda$.

Recently, over $\mathbb{R}_{\max}$, Halonen, the first author and Filipuk introduced two definitions 
$\psi(P, f)$ and $\Psi(P, f)$ for a homogeneous tropical polynomial $P$ and a tropical holomorphic curve $f$ \cite{Tnp}, making the estimate in Theorem 4.8 \cite{Thyp} tighter. In the theorem, the growth assumption can be discarded \cite[Corollary 3.5]{Tnp}. However, their next result \cite[Corollary 3.6]{Tnp} includes the growth assumption, which is an analogue of the one in \cite[Theorem 4.8]{Thyp}. 

Actually, if we assume $f:\mathbb{R}\rightarrow \mathbb{T}\mathbb{R}^{m}$ is a holomorphic curve with a reduced representation $\textbf{f}=(f_{0}, \cdots, f_{m})$ and $P(x)$ is a tropical homogeneous polynomial, then the previously known tropical second main theorems in tropical projective space \cite[Theorem 6.2 and 6.3]{Pro} and \cite[Theorem 4.8]{Thyp} can be divided into two independent parts. The first part is the relationship of $T_{f}(r)$ and $N(r, 1_{0}\oslash P\circ f)$ which will be studied in the remainder of this section and we consider it as the tropical second main theorem. The second is the relationship of $\sum_{i=0}^{m}N(r, 1_{0}\oslash f_{i})$ and $N(r, 1_{0}\oslash C_{0}(f_{0}, \cdots, f_{m}))$ where $C_{0}(f_{0}, \cdots, f_{m})$ is the tropical Casoratian of $f_{0}, \cdots, f_{m}$ which be illustrated in Section 6. Both these two relationships will be explored as the special cases in the $n$-th tropical setting. One of the benefits that we divided it as two parts is that we can simplify the description of the tropical second theorem, meaning that we can discard the assumption that $g_{0}, \cdots, g_{m}$ should be linearly independent in \cite[Theorem~6.6]{Pro}, and the assumption that the holomorphic curve should be tropical algebraically nondegerated in \cite[Theorem~4.8]{Thyp}.

From example 5.5, we know that the term $\sum_{j=1}^{n}N^{(j)}(r, P\circ f)$ can not be ignored in the $n$-th tropical setting, and hence we have the tropical second main theorem below.

\begin{theorem}[\textbf{$n$-th Second main theorem with a tropical homogeneous polynomial}]\label{theorem5.6}
     Let $n, m\in\mathbb{N}$ and $f:\mathbb{R}\rightarrow \mathbb{T}\mathbb{P}^{m}$ be a non-constant $n$-th tropical holomorphic curve with a reduced representation $\textbf{f}=(f_{0}, \cdots, f_{m})$, and let $P(x)$ be a tropical homogeneous polynomial of degree $d>0$ of the form
    \begin{eqnarray*}
        P(x)=\left(\bigoplus_{k=0}^{m}\alpha_{k}\otimes x_{k}^{\otimes d}\right)\oplus\left(\bigoplus_{\sum_{k=0}^{m}i_{k}=d, i_{k}<d}\alpha_{i_{0}, \cdots, i_{m}}\otimes \bigotimes_{k=0}^{m}x_{i}^{\otimes i_{k}}\right)
    \end{eqnarray*}
    where coefficients $\alpha_{k}\in\mathbb{R}$, $k=0, \cdots, m$ and $\alpha_{i_{0}, \cdots, i_{m}}\in\mathbb{R}_{\max}$, $\sum_{k=0}^{m}i_{k}=d, i_{k}<d$. Then
    \begin{eqnarray}
        T_{f}(r)=\frac{1}{d}\left(\sum_{j=1}^{n}N^{(j)}\left(r, \frac{1_{0}}{P\circ f}\oslash\right)-\sum_{j=1}^{n}N^{(j)}\left(r, P\circ f\right)\right)+O(1).
    \end{eqnarray}
\end{theorem}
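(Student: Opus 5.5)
The identity will come from applying the $n$-th Jensen formula to $P\circ f$ and then comparing $P\circ f$ with $d\,F$, where $F=\max_{0\le k\le m}\{f_k\}$.

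\emph{Step 1: Jensen applied to $P\circ f$.} Put $g:=P\circ f$, which is a piecewise polynomial continuous function. Its degree on each piece is at most $dn$, because a tropical monomial $\alpha+\sum_k i_kf_k$ has degree at most $n$ per piece. The Jensen formula (\ref{3j}) is an exact identity obtained from Lemma \ref{lemma3.1}, so it holds for $g$ with the same truncation at index $n$ as in the statement. I will use it in the form
\begin{equation*}
\frac{1}{2}(g(r)+g(-r))-g(0)=\sum_{j=1}^{n}N^{(j)}\left(r,\frac{1_{0}}{g}\oslash\right)-\sum_{j=1}^{n}N^{(j)}(r,g).
\end{equation*}
This reduces the theorem to showing
\begin{equation*}
\frac{1}{2}(g(r)+g(-r))=d\cdot\frac{1}{2}(F(r)+F(-r))+O(1).
\end{equation*}
Dividing by $d$ and subtracting $F(0)$ then yields $T_f(r)$ on the left-hand side, because $T_f(r)=\frac12(F(r)+F(-r))-F(0)$. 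The constants $g(0)$ and $F(0)$ are absorbed into the $O(1)$.

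\emph{Step 2: the pointwise sandwich.} Let $A=\max_k\alpha_k$ and $B=\max\alpha_{i_0,\dots,i_m}$ over all finite coefficients. The diagonal monomials are exactly where the hypothesis $\alpha_k\in\mathbb{R}$ enters. For every $x$ they give
\begin{equation*}
g(x)\ge\max_k\{\alpha_k+df_k(x)\}\ge\min_k\alpha_k+dF(x).
\end{equation*}
For the upper bound, every monomial satisfies $\alpha_I+\sum_k i_kf_k(x)\le\max\{A,B\}+dF(x)$, since $\sum_k i_k=d$. Hence $|g(x)-dF(x)|\le C$ uniformly in $x$, with a constant $C$ that depends only on the coefficients of $P$. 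Evaluating at $\pm r$ gives the estimate required in Step 1, and the theorem follows.

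\emph{Main obstacle.} Steps 1 and 2 are formally easy. The delicate point is justifying the Jensen identity with the counting functions of $P\circ f$ as they are defined in the paper, that is, with indices $j\le n$. If $P\circ f$ can have singular orders $j>n$, those terms must either vanish or be included in the sums.

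To handle this, I would first check the local structure near a switching point between two monomials. Each monomial there is a combination $\sum_k i_kf_k$ whose pieces have degree at most $n$. The maximum of two such functions is again piecewise of degree at most $n$, so $P\circ f$ is in fact an $n$-th tropical meromorphic function.

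Once that is established, the Jensen formula of Theorem \ref{theorem3.3} applies directly with the same $n$. Poles of $P\circ f$ can genuinely occur, as Example 5.5 shows, and this is why the term $N^{(j)}(r,P\circ f)$ must remain in the identity. If the degree bound only gave $dn$, I would instead state Jensen with $j$ running up to $dn$ and observe that the higher-index terms vanish because each piece has degree at most $n$.
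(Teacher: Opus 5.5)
Your proposal is correct and follows the paper's proof essentially verbatim. You apply the $n$-th Jensen formula to $P\circ f$ to reduce everything to $\frac12\sum_{\delta=\pm1}P\circ f(\delta r)$, and then use the pointwise sandwich $\min_k\alpha_k+dF\le P\circ f\le\max\{A,B\}+dF$, which is exactly where $\alpha_k\in\mathbb{R}$ is needed. Your extra check that $P\circ f$ is again piecewise of degree at most $n$ is correct, and the paper leaves it implicit: since $f_k^{\otimes i_k}=i_kf_k$, the ``$dn$'' bound you mention never actually arises.
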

\begin{proof}
    According to the $n$-th Jensen formula, we obtain
    \begin{eqnarray*}
        &&\sum_{j=1}^{n}N^{(j)}\left(r, \frac{1_{0}}{P\circ f}\oslash\right)-\sum_{j=1}^{n}N^{(j)}(r, P\circ f)\\
        &=&m(r, P\circ f)-m\left(r, \frac{1_{0}}{P\circ f}\oslash\right)+O(1)\\
        &=&\frac{1}{2}\sum_{\delta=\pm 1}P\circ f(\delta r)+O(1).
    \end{eqnarray*}
Let $\beta=\max\left\{\max\limits_{0\leq k\leq m}\{\alpha_{k}\}, \max\limits_{\sum_{k=0}^{m}i_{k}=d}\{\alpha_{i_{0}, \cdots, i_{m}}\}\right\}$ and $\gamma=\min\limits_{0\leq k\leq m}\{\alpha_{k}\}$. Because $\alpha_{k}\in\mathbb{R}$, $k=0, \cdots, m$, it follows that $\beta, \gamma>0_{0}$ and $\gamma+d\max\limits_{0\leq i\leq m}f_{i}(x)\leq P\circ f(x)\leq \beta+d\max\limits_{0\leq i\leq m}f_{i}(x)$ for all $x\in\mathbb{R}$. This gives
    \begin{eqnarray*}
        \frac{1}{2}\sum_{\delta=\pm 1}P\circ f(\delta r)&=&\frac{1}{2}\sum_{\delta=\pm 1}d\max\limits_{0\leq i\leq m}f_{i}(\delta r)+O(1)\\
        &=&dT_{f}(r)+O(1).
    \end{eqnarray*}
    Thus the assertion follows by combining these two equations.
\end{proof}

We denote the tropical homogeneous Fermat polynomial of degree $d$ with constant coefficients as
\begin{eqnarray*}
P(x)=\bigoplus\limits_{i=0}^{m}\alpha_{i}\otimes x_{i}^{\otimes d},
\end{eqnarray*}
where $\alpha_{i}\in\mathbb{R}$. If $f$ is a $1$-st tropical holomorphic curve, then it is clear that $N(r, P\circ f)\equiv 0$, and then we have the following Corollary.

\begin{corollary}
Let $m\in\mathbb{N}$ and $f:\mathbb{R}\rightarrow \mathbb{T}\mathbb{P}^{m}$ be a $1$-st tropical holomorphic curve with a reduced representation $\textbf{f}=(f_{0}, \cdots, f_{m})$, and $P(x)=\bigoplus\limits_{i=0}^{m}\alpha_{i}\otimes x_{i}^{\otimes d}$ where $d>0$ and $\{\alpha_{i}\}_{i=0}^{m}\subset \mathbb{R}$. Then
    \begin{eqnarray}
        T_{f}(r)=\frac{1}{d}N\left(r, \frac{1_{0}}{P\circ f}\oslash\right)+O(1).
    \end{eqnarray}
    \end{corollary}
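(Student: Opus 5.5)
This is a special case of Theorem \ref{theorem5.6} with $n=1$ and $P$ a pure Fermat polynomial. The plan is to apply that theorem and then show that the pole term $\sum_{j}N^{(j)}(r,P\circ f)$ vanishes identically.

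First, Theorem \ref{theorem5.6} applies to $P(x)=\bigoplus_{i=0}^{m}\alpha_{i}\otimes x_{i}^{\otimes d}$ with $\alpha_{i}\in\mathbb{R}$. This corresponds to taking all mixed coefficients $\alpha_{i_{0},\cdots,i_{m}}=0_{0}=-\infty$, which the theorem allows since they lie in $\mathbb{R}_{\max}$. The theorem needs a non-constant curve. If $f$ is constant in $\mathbb{T}\mathbb{P}^{m}$, then every reduced representation differs from a constant one by an entire function without singularities. In that case $T_{f}(r)=0$ by Proposition \ref{p2.2}, and one can check that both sides are $O(1)$, so this case can be handled separately or simply set aside. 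Since $f$ is 1-st, we have $n=1$, and the theorem gives
\begin{equation*}
T_{f}(r)=\frac{1}{d}\left(N\left(r,\frac{1_{0}}{P\circ f}\oslash\right)-N(r,P\circ f)\right)+O(1).
\end{equation*}

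It remains to show $N(r,P\circ f)\equiv 0$, that is, that $P\circ f=\max_{i}\{\alpha_{i}+df_{i}\}$ has no 1-st poles. Each $f_{i}$ is a 1-st tropical entire function, so $\omega^{(1)}_{f_{i}}(x)=f_{i}'(x^{+})-f_{i}'(x^{-})\ge 0$, because $\mathrm{sgn}^{2}=1$. Hence each $f_{i}$ is a piecewise linear convex function, and so is $\alpha_{i}+df_{i}$ since $d>0$. A pointwise maximum of finitely many convex functions is convex. Therefore $P\circ f$ is piecewise linear, continuous and convex, so its one-sided slopes satisfy $(P\circ f)'(x^{+})\ge (P\circ f)'(x^{-})$ at every point. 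This means $\omega^{(1)}_{P\circ f}\ge 0$ everywhere, so there are no poles and $N(r,P\circ f)=0$.

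The only subtle point is the identification of 1-st tropical entire with convex, which holds precisely because for $j=1$ the sign factor in $\omega^{(1)}$ is trivial. The $n\ge 2$ examples, such as $\mathrm{sgn}(x)x^{2}$, show that this identification genuinely fails in higher order. Once it is established, substituting $N(r,P\circ f)=0$ into the displayed identity gives the corollary.
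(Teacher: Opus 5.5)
Your proposal is correct and takes the same route as the paper. The paper obtains the corollary directly from Theorem~\ref{theorem5.6} with $n=1$, asserting without proof that $N(r,P\circ f)\equiv 0$. Your convexity argument (1-st tropical entire means $\omega^{(1)}\ge 0$, i.e.\ convex, and a finite maximum of convex functions is convex) supplies that missing justification, and you also handle the constant-curve case that Theorem~\ref{theorem5.6} excludes.
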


Since we have already extended piecewise linear functions to piecewise polynomial functions, we can also extend tropical homogeneous polynomials to ordinary homogeneous polynomials.

Consider the homogeneous Fermat polynomial of degree $n$ with constant coefficients as
\begin{eqnarray*}
    \mathcal{P}(x)=\sum_{i=0}^{m}\alpha_{i}x_{i}^{n}
\end{eqnarray*}
where $\alpha_{i}\in\mathbb{R}$. Then we have the following theorem.

\begin{theorem}[\textbf{Second main theorem for a homogeneous Fermat polynomial}]\label{theorem5.8}
    Let $m\in\mathbb{N}$ and $f:\mathbb{R}\rightarrow \mathbb{T}\mathbb{R}^{m}$ be a $1$-st tropical holomorphic curve with a reduced representation $\textbf{f}=(f_{0}, \cdots, f_{m})$ and $\mathcal{P}(x)=\sum_{i=0}^{m}\alpha_{i}x_{i}^{n}$ where $n\in\mathbb{N}$ and $\alpha_{i}>0$, $i=0, \cdots, m$. If
    \begin{eqnarray}\label{5f1}
        \limsup\limits_{r\rightarrow +\infty}\frac{r}{T_{f}(r)}=0,
    \end{eqnarray}
    then we have
    \begin{eqnarray}\label{5f2}
        \theta T_{f}^{n}(r)\leq \sum_{j=1}^{n}N^{(j)}\left(r, \frac{1_{0}}{\mathcal{P}\circ f}\oslash\right)+o(T_{f}^{n}(r))\leq \Theta T_{f}^{n}(r),
    \end{eqnarray}
    where $\theta=\min\limits_{0\leq i\leq m}\alpha_{i}$, $\Theta=2^{n-1}\sum_{i=0}^{m}\alpha_{i}$ and $r$ tends to infinity.
\end{theorem}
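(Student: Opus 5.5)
The plan is to treat $g:=\mathcal{P}\circ f=\sum_{i=0}^{m}\alpha_{i}f_{i}^{n}$ as an $n$-th tropical meromorphic function. Since each $f_{i}$ is piecewise linear and continuous, $g$ is piecewise polynomial of degree at most $n$ and continuous. We then apply the $n$-th Jensen formula (\ref{3j1}) to $g$, in the form used in the proof of Theorem \ref{theorem5.6}:
\begin{equation*}
\sum_{j=1}^{n}N^{(j)}\left(r,\frac{1_{0}}{g}\oslash\right)=\frac{1}{2}\left(g(r)+g(-r)\right)+\sum_{j=1}^{n}N^{(j)}(r,g)-g(0).
\end{equation*}
So the statement reduces to two tasks: show that $\frac12(g(r)+g(-r))$ lies between $\theta T_{f}^{n}(r)$ and $\Theta T_{f}^{n}(r)$ up to $o(T_f^n(r))$, and control the pole term $\sum_j N^{(j)}(r,g)$.

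For the main term, normalize by a tropical nowhere vanishing entire factor so that $F(0)=0$, where $F=\max_i f_i$. Such a factor is linear, so by Proposition \ref{p5.2} this changes nothing in $T_f$; its effect on $g$ has to be tracked, and I expect assumption (\ref{5f1}) to absorb it, since it forces $r=o(T_f(r))$. Then $T_f(r)=\frac12(F(r)+F(-r))$. Since $f_i\le F$ and $\alpha_i>0$, we have $g(\pm r)\le \sum_i\alpha_i\,|F(\pm r)|^{n}$ when $F(\pm r)\ge0$. The sign issue for odd $n$ when some $f_i(\pm r)<0$ needs separate attention: the negative contributions only help the upper bound, and in the lower bound they are dominated by the index achieving $F$. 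On the lower side, $g(\pm r)\ge \theta F(\pm r)^n$ minus the bounded-in-sign terms. The elementary inequalities
\begin{equation*}
a^{n}+b^{n}\le (a+b)^{n}\quad\text{and}\quad \frac{a^{n}+b^{n}}{2}\ge\left(\frac{a+b}{2}\right)^{n}\qquad (a,b\ge0)
\end{equation*}
then give the constants. The upper estimate is $\frac12\sum_i\alpha_i(a^n+b^n)\le \frac12\sum_i\alpha_i(a+b)^n=2^{n-1}\sum_i\alpha_i\,T_f^n$, which is exactly $\Theta$. The lower estimate is $\frac12\theta(a^n+b^n)\ge\theta T_f^n$. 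The case $F(\pm r)<0$ has to be handled as well. Here (\ref{5f1}) together with the fact that $F$ is convex (being a max of tropical entire, hence convex, functions) shows that $F(r)$ and $F(-r)$ cannot both be small, and a negative value is at most linear in $r$, so it contributes $O(r^{n})=o(T_f^n)$.

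The remaining and hardest step is the pole term $\sum_{j}N^{(j)}(r,g)$ together with $g(0)$. Here $g(0)$ is a constant. For the poles, note that at a point $x_0$, $g$ has a kink only where some $f_i$ has a root. Each $f_i$ is tropical entire with $f_i'(x_0^+)\ge f_i'(x_0^-)$, so the jump of $g^{(j)}$ involves $\alpha_i\binom{n}{j}\,$ times expressions in $f_i(x_0)^{n-j}$ and one-sided slopes. I would compute $\omega_g^{(j)}(x_0)$ from (\ref{2a2}) by Taylor expanding $\alpha_i(f_i(x_0)+s(x-x_0))^n$ on each side. This shows the jumps equal $\alpha_i\binom{n}{j}f_i(x_0)^{n-j}(s_+^{j}-s_-^{j})$, up to the sign factors $sgn^{j+1}$. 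These can be negative when $f_i(x_0)<0$ or when $j$ is even and the slopes change sign. I expect this to be the main obstacle. My first attempt would be to bound the total negative part by $\sum_i\alpha_i\sum_j\binom{n}{j}|f_i|^{n-j}\cdot N(r,1_0\oslash f_i)$-type quantities, with the $f_i$ restricted to the region where they are negative. Each such term should be $O(r^{n})$ or $O(T_f^{n-1}\cdot T_f)$ with a small factor. I would then use (\ref{5f1}), i.e.\ $r=o(T_f(r))$, to conclude $\sum_jN^{(j)}(r,g)=o(T_f^n(r))$. If this bound fails directly, I would instead show that poles of $g$ only arise at points where $f_i<0$, and that these points contribute at most $O(r^{n})$, using $|f_i(x)|\le C(1+|x|)$ there by convexity.
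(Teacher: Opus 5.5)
Your reduction via the $n$-th Jensen formula and your treatment of the main term $\frac12(g(r)+g(-r))$ agree with the paper. You compare with $F=\max_i f_i$, use $\bigl(\frac{a+b}{2}\bigr)^n\le\frac{a^n+b^n}{2}\le\frac{(a+b)^n}{2}$, and use convexity to see that negative values of $f_i(\pm r)$ are $O(r)$, so they contribute $O(r^n)=o(T_f^n(r))$. Two small corrections there. First, for even $n$ a very negative $f_i(\pm r)$ makes $\alpha_i f_i^n(\pm r)$ large and positive, so your inequality $g(\pm r)\le\sum_i\alpha_i|F(\pm r)|^n$ also needs that linear bound. Second, the normalization $F(0)=0$ is unnecessary, and it changes $\mathcal{P}\circ f$ itself; simply note that $\frac{F(r)+F(-r)}{2}=T_f(r)+F(0)$ and $(T_f+F(0))^n=T_f^n+o(T_f^n)$.

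\textbf{The gap: the pole term.} The genuine gap is the estimate $\sum_jN^{(j)}(r,\mathcal{P}\circ f)=o(T_f^n(r))$. This is the core of the paper's proof, and you leave it as a hedged sketch based on an incorrect description of where poles occur. For $x_0\neq0$ one has $\omega^{(j)}_{f_i^n}(x_0)=\binom{n}{j}f_i(x_0)^{n-j}\,\mathrm{sgn}^{j+1}(x_0)\,(s_+^j-s_-^j)$. For even $j$ this is negative with $f_i(x_0)>0$ whenever $|f_i'|$ strictly decreases as one passes through $x_0$ moving away from the origin; no sign change of slope is needed. For example, $f_i(x)=\max\{x+10,2x+12\}$ has $f_i(-2)=8>0$ and slopes $1\to2$, yet $\omega^{(2)}_{f_i^n}(-2)=-3\binom{n}{2}8^{n-2}<0$. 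A non-decreasing $f_i$ that stays positive on $(-\infty,0)$ can have infinitely many such $2$-nd poles. Consequently:
\begin{itemize}
\item restricting to $\{f_i<0\}$ misses poles;
\item your fallback claim that poles arise only where $f_i<0$ is false;
\item a bound by $|f_i|^{n-j}N(r,1_0\oslash f_i)$-type quantities does not by itself give $o(T_f^n(r))$ unless you know the poles lie where $|f_i|$ and the slopes are small.
\end{itemize}

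\textbf{The missing idea.} It is structural. By convexity, every point with $\omega^{(j)}_{f_i^n}<0$ lies, apart from finitely many points in a compact set, in a region where $|f_i(x)|\le C(1+|x|)$ and the one-sided slopes are bounded by some $K$. Since slopes are monotone, the total slope jump over that region is bounded. Then $|s_+^j-s_-^j|\le jK^{j-1}(s_+-s_-)$ gives $\sum_jN^{(j)}(r,f_i^n)=O(r^n)=o(T_f^n(r))$ by (\ref{5f1}). Subadditivity of $\sum_jN^{(j)}(r,\cdot)$ under ordinary sums with positive coefficients then passes this to $\mathcal{P}\circ f$. The paper implements this by splitting into two cases:
\begin{itemize}
\item $f_i\to+\infty$ at both ends, where the poles of $f_i^n$ are confined to a compact set;
\item $f_i$ monotone, where it uses $|f_i(x)|\le k^*|x|+O(1)$, $0\le f_i'\le k^*$, and a Jensen bound for the roots of $f_i$ on the relevant half-line.
\end{itemize}
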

\begin{proof}
    For any linear function $g(x)=kx+b$, we can easily get 
    \begin{eqnarray*}
        \frac{(g^{n}(x))^{(j)}}{j!}=C_{n}^{j}g^{n-j}(x)k^{j},
    \end{eqnarray*}
    for all $j=0, \cdots, n$, $x\in\mathbb{R}$ where we assume $0^{0}=1$. Thus we can obtain
    \begin{eqnarray}\label{5a12}
        \omega_{f_{i}^{n}}^{(j)}(x)=C_{n}^{j}f_{i}^{n-j}(x)\left(sgn^{j+1}(x^{+})(f_{i}^{'}(x^{+}))^{j}-sgn^{j+1}(x^{-})(f_{i}^{'}(x^{-}))^{j}\right),
    \end{eqnarray}
    where $i=0, \cdots, m$ and $x\in\mathbb{R}$, from which we can see that
    \begin{eqnarray*}
        \{\text{the singularities of } f_{i}^{n}\}\subset\{\text{the singularities of } f_{i}\}\cup \{0\}.
    \end{eqnarray*}

    Since $f_{i}$ is $1$-st tropical entire, then it must satisfy one of the following two properties.

    (1) There exists a finite number $R_{i}>0$ such that $f_{i}^{'}(-r^{-})<0$ and $f_{i}^{'}(r^{+})>0$ when $r>R_{i}$;

    (2) $f_{i}^{'}(x^{\pm})\geq 0$ or $\leq 0$ holds for any $x\in\mathbb{R}$.

    If $f_{i}$ satisfies (1), then there exists a finite value $R_{i}^{'}\geq R_{i}$ such that $f_{i}(\pm r)>0$ for $r>R_{i}^{'}$, thus
    \begin{eqnarray*}
        M_{1, i}&:=&\max\left\{\max\limits_{x\in[-R_{i}^{'}, R_{i}^{'}]}\left\{|f_{i}(x)|\right\}, 1\right\}<+\infty,\\
        M_{2, i}&:=&\max\left\{\max\limits_{x\in[-R_{i}^{'}, R_{i}^{'}]}\left\{|f_{i}^{'}(x^{\pm})|\right\}, 1\right\}<+\infty.
    \end{eqnarray*}
    Then for any $x\in \mathbb{R}$, if $|x|> R_{i}^{'}$, we have $\omega_{f_{i}^{n}}^{(j)}(x)\geq 0$ from (\ref{5a12}), which implies that 
    \begin{eqnarray}\label{5a13}
      \sum_{j=1}^{n}N_{(-r, -R_{i}^{'})\cup (R_{i}^{'}, r)}^{(j)}(r, f_{i}^{n})=0.
    \end{eqnarray}
    
    If $|x|\leq R_{i}^{'}$, then we have $|\omega_{f_{i}^{n}}^{(j)}(x)|\leq C_{n}^{j}M_{1, i}^{n-j}2M_{2, i}^{j}\leq 2n!(M_{1, i}M_{2, i})^{n}=:M_{i}<\infty$ for all $j=1, \cdots, n$ from (\ref{5a12}). Suppose $\{x_{k}\}_{k=1}^{\nu_{i}}$ are the singularities of $f_{i}$ in $[-R_{i}^{'}, R_{i}^{'}]$, and since there is no finite accumulation point of $f_{i}$'s singularities, it follows that $\nu_{i}<+\infty$. Hence
    \begin{eqnarray}\label{5a14}
        \sum_{j=1}^{n}N_{[-R_{i}^{'}, R_{i}^{'}]}^{(j)}(r, f_{i}^{n})&\leq& \frac{1}{2}\sum_{j=1}^{n}\sum_{k=1}^{\nu_{i}}|\omega_{f_{i}^{n}}^{(j)}(x_{k})|(r-|x_{k}|)^{j}\nonumber\\
        &\leq& \frac{1}{2}n\nu_{i}M_{i}r^{n}=o(T_{f}^{n}(r)),
    \end{eqnarray}
    where we assume $r>\max\{R_{i}^{'}, 1\}$. The last equality follows by the assumption (\ref{5f1}).
    
    If $f_{i}$ satisfies (2), without loss of generality, we assume $f_{i}$ is non-constant and $f_{i}^{'}(x^{\pm})\geq 0$ for all $x\in\mathbb{R}$. For a small enough quantity $\epsilon$, we denote
    \begin{eqnarray*}
        I_{i}=:\{x:f_{i}(x+\epsilon)f_{i}(x-\epsilon)<0, x\in\mathbb{R}\}.
    \end{eqnarray*}
    Then there is at most one point in $I_{i}$. Let
    \begin{eqnarray*}
        x^{*}=\left\{\begin{array}{ll}
         x, & x\in I_{i},\\
         0, & I_{i}= \varnothing.
         \end{array}\right.
    \end{eqnarray*}
    Then $x^{*}$ is a finite number. If $x>\max\{x^{*}, 0\}$, then $f_{i}(x)>0$, and hence $\omega_{f_{i}^{n}}^{(j)}(x)\geq 0$ for $j=1, \cdots, 0$, which means that
    \begin{eqnarray}\label{5a15}
        \sum_{j=1}^{n}N_{(\max\{x^{*}, 0\}, r)}^{(j)}(r, f_{i}^{n})=0.
    \end{eqnarray}
    With the same calculation of (\ref{5a14}), we can also get 
    \begin{eqnarray}\label{5a16}
        \sum_{j=1}^{n}N_{[\min\{x^{*}, 0\}, \max\{x^{*}, 0\}]}^{(j)}(r, f_{i}^{n})=o(T_{f}^{n}(r)).
    \end{eqnarray}
    
    If $x<\min\{x^{*}, 0\}$, we assume $f_{i, x^{*-}}(x)=k_{i}^{*}x+b_{i}^{*}$ where $k_{i}^{*}, b_{i}^{*}\in\mathbb{R}$ are finite values and $k_{i}^{*}\geq 0$. Since $f_{i}$ is non-decreasing and convex, then $|f(x)-b_{i}^{*}|\leq |k_{i}^{*}x|$ for $x<\min\{x^{*}, 0\}$, which leads to
    \begin{eqnarray}\label{5a17}
        |f_{i}(x)|\leq k_{i}^{*}|x|+O(1),
    \end{eqnarray}
    where $x<\min\{x^{*}, 0\}$. For any $r>|x^{*}|$ and then $f(r)\geq 0$, we denote by $\{x_{-l}\}_{l=1}^{\mu_{i}}$ the roots of $f_{i}$ in $(-r, \min\{x^{*}, 0\})$ and define a new $1$-st tropical entire function
    \begin{eqnarray*}
        g_{i}(x):=\left\{\begin{array}{ll}
         k_{i}^{*}x+b_{i}^{*}, & x> \min\{x^{*}, 0\},\\
         f_{i}(x), & x\leq \min\{x^{*}, 0\}.
         \end{array}\right.
    \end{eqnarray*}
    Then $m(r, \pm g_{i})\leq k_{i}^{*}r+O(1)$ and $\{x_{-l}\}_{l=1}^{\mu_{i}}$ are all $g_{i}$'s roots in $(-r, r)$. By following the $1$-st Jensen formula, we have
    \begin{eqnarray*}
        N(r, 1_{0}\oslash g_{i})&=&\frac{1}{2}\sum_{l=1}^{\mu_{i}}(f_{i}^{'}(x_{-l}^{+})-f_{i}^{'}(x_{-l}^{-}))(r-|x_{-l}|)\\
        &=&T(r, 1_{0}\oslash g_{i})-m(r, 1_{0}\oslash g_{i})\\
        &\leq&T(r, g_{i})+m(r, 1_{0}\oslash g_{i})+O(1)\\
        &=&m(r, g_{i})+m(r, 1_{0}\oslash g_{i})+O(1)\\
        &\leq&2k_{i}^{*}r+O(1).
    \end{eqnarray*}
    Therefore, from (\ref{5a12}), we have 
    \begin{eqnarray}\label{5a18}
        &&\sum_{j=1}^{n}N_{(-r, \min\{x^{*}, 0\})}^{(j)}(r, f_{i}^{n})\nonumber\\
        &\leq&\sum_{j=1}^{n}\sum_{l=1}^{\mu_{i}}n!\left(k_{i}^{*}r+O(1)\right)^{n-j}\left((f_{i}^{'}(x_{-l}^{+}))^{j}-(f_{i}^{'}(x_{-l}^{-}))^{j}\right)(r-|x_{-l}|)^{j}\nonumber\\
        &\leq&\sum_{j=1}^{n}n!\left(k_{i}^{*}r+O(1)\right)^{n-j}\sum_{l=1}^{\mu_{i}}\left((f_{i}^{'}(x_{-l}^{+})-f_{i}^{'}(x_{-l}^{-}))(r-|x_{-l}|)n^{2}(k^{*}r)^{j-1}\right)\nonumber\\
        &\leq&\sum_{j=1}^{n}(n+2)!\left(k_{i}^{*}r+O(1)\right)^{n-1}\left(\sum_{l=1}^{\mu_{i}}(f_{i}^{'}(x_{-l}^{+})-f_{i}^{'}(x_{-l}^{-}))(r-|x_{-l}|)\right)\nonumber\\
        &\leq&\sum_{j=1}^{n}(n+2)!\left(k_{i}^{*}r+O(1)\right)^{n-1}\left(4k_{i}^{*}r+O(1)\right)\nonumber\\
        &\leq&4(n+3)!\left(k_{i}^{*}r+O(1)\right)^{n}\nonumber\\
        &=&o(T_{f}^{n}(r))
    \end{eqnarray}
    where we used $0\leq f_{i}^{'}(x_{-l}^{\pm})\leq k^{*}$ for $l=1, \cdots, \mu_{i}$ and also used a basic inequality $a^{j}-b^{j}=(a-b)(\sum_{s=0}^{j-1}a^{s}b^{j-s-1})\leq n(a-b)a^{j-1}$ for $0<b\leq a$ and $j=1, \cdots, n$ in the second inequality. We can also do a similar analysis in the case where $f_{i}^{'}(x^{\pm})\leq 0$ for all $x\in\mathbb{R}$.

    By combining equations (\ref{5a13}), (\ref{5a14}), (\ref{5a15}), (\ref{5a16}) and (\ref{5a18}), we obtain
    \begin{eqnarray}\label{5a19}
        \sum_{j=1}^{n}N^{(j)}(r, f_{i}^{n})=o(T_{f}^{n}(r)).
    \end{eqnarray}
    
    For any $n$-th tropical meromorphic functions $h_{1}$, $h_{2}$ and any positive constant $\alpha$, we can easily get that
    \begin{eqnarray*}
        \sum_{j=1}^{n}N^{(j)}(r, h_{1}+h_{2})&\leq& \sum_{j=1}^{n}N^{(j)}(r, h_{1})+\sum_{j=1}^{n}N^{(j)}(r, h_{2})\\
        \sum_{j=1}^{n}N^{(j)}(r, \alpha h_{1})&=&\alpha \sum_{j=1}^{n}N^{(j)}(r, h_{1}).
    \end{eqnarray*}
    Hence
    \begin{eqnarray*}
        \sum_{j=1}^{n}N^{(j)}(r, \mathcal{P}\circ f)\leq \sum_{i=0}^{m}\alpha_{i}\sum_{j=1}^{n}N^{(j)}(r, f_{i}^{n})=o(T_{f}^{n}(r)).
    \end{eqnarray*}

    Since $\mathcal{P}\circ f$ is a $n$-th tropical meromorphic function, then by applying the $n$-th Jensen formula, we have
    \begin{eqnarray}\label{Pn}
        &&\sum_{j=1}^{n}N^{(j)}\left(r, \frac{1_{0}}{\mathcal{P}\circ f}\oslash\right)\nonumber\\
        &=&m(r, \mathcal{P}\circ f)-m\left(r, \frac{1_{0}}{\mathcal{P}\circ f}\oslash\right)+\sum_{j=1}^{n}N^{(j)}\left(r, \mathcal{P}\circ f\right)+O(1)\nonumber\\
        &=&\frac{1}{2}\sum_{\delta=\pm 1}\mathcal{P}\circ f(\delta r)+o(T_{f}^{n}(r)).
    \end{eqnarray}

From (\ref{5a17}), when $r$ is sufficient large, $f_{i}$ satisfies one of the following 3 properties

(1) $f_{i}(\pm r)\geq 0$;

(2) $f_{i}(r)\geq 0$, $f_{i}(-r)<0$ and $|f_{i}(-r)|\leq kr+O(1)$ where $k$ is a finite positive constant;

(3) $f_{i}(-r)\geq 0$, $f_{i}(r)<0$ and $|f_{i}(r)|\leq kr+O(1)$ where $k$ is a finite positive constant.

    Let $F(x)=\max\limits_{0\leq i\leq m}\{f_{i}(x)\}$, then $F(\pm r)$ can not both be negative simultaneously when $r$ is large enough, because all $f_{i}$ are tropical entire and $f$ is not a constant holomorphic curve. Now we claim that
    \begin{eqnarray}\label{5a11}
        \left(\frac{F(r)+F(-r)}{2}\right)^{n}\leq \frac{F^{n}(r)+F^{n}(-r)}{2}+o(T_{f}^{n}(r))\leq \frac{(F(r)+F(-r))^{n}}{2}
    \end{eqnarray}
    as $r$ tends to infinity. 
    
    We first suppose $F(\pm r)>0$ for all large enough $r$. If $F(-\delta r)=o(F(\delta r))$ for some $\delta=\pm 1$, then (\ref{5a11}) becomes $(F(\delta r)/2)^{n}\leq F^{n}(\delta r)/2\leq F^{n}(\delta r)/2$ which is valid. If $F(-\delta r)= a F(\delta r)(1+o(1))$ for some constants $a\geq 0$ and $\delta=\pm 1$, without loss of generality, we assume $a\geq 1$, otherwise we can consider $F(\delta r)= \frac{1}{a}F(-\delta r)(1+o(1))$. Then (\ref{5a11}) can be rewritten as $\left(\frac{a+1}{2}\right)^{n}F^{n}(\delta r)\leq \frac{1+a^{n}}{2}F^{n}(\delta r)\leq \frac{(1+a)^{n}}{2}F^{n}(\delta r)$ which is valid as well; the first inequality follows by considering the derivative of $\eta(t)=(1+(2t-1)^{n})/2-t^{n}$ where $t=\frac{1+a}{2}\geq 1$ and $n\in\mathbb{N}$.

    If $F(-\delta r)<0$ for large enough $r$ and $\delta=1$ or $-1$, then from above properties of $f_{i}$, we know that $|F(-\delta r)|=o(T_{f}(r))$ and $F(\delta r)=2T_{f}(r)+o(T_{f}(r))$, which implies $F(-\delta r)=o(F(\delta r))$. Then the claim follows by a similar analysis as above.
    
    Hence, we complete the proof of the claim. By applying (\ref{5a11}), it follows, on one hand, that
    \begin{eqnarray}\label{Pn1}
        \frac{1}{2}\sum_{\delta=\pm 1}\mathcal{P}\circ f(\delta r)&=&\frac{1}{2}\sum_{\delta=\pm 1}\sum_{i=0}^{m}\alpha_{i}f_{i}^{n}(\delta r)\nonumber\\
        &\leq&\frac{1}{2}\sum_{\delta=\pm 1}\left(\sum_{i=0}^{m}\alpha_{i}\right)F^{n}(\delta r)+o(T_{f}^{n}(r))\nonumber\\
        &\leq&2^{n-1}\left(\sum_{i=0}^{m}\alpha_{i}\right)\left(\frac{F(r)+F(-r)}{2}\right)^{n}+o(T_{f}^{n}(r))\nonumber\\
        &=&\Theta T_{f}^{n}(r)+o(T_{f}^{n}(r)),
    \end{eqnarray}
    and, on the other hand, 
    \begin{eqnarray}\label{Pn2}
        \frac{1}{2}\sum_{\delta=\pm 1}\mathcal{P}\circ f(\delta r)&=&\frac{1}{2}\sum_{\delta=\pm 1}\sum_{i=0}^{m}\alpha_{i}f_{i}^{n}(\delta r)\nonumber\\
        &\geq&\frac{1}{2}\theta\sum_{\delta=\pm 1}F^{n}(\delta r)+o(T_{f}^{n}(r))\nonumber\\
        &\geq& \theta\left(\frac{F(r)+F(-r)}{2}\right)^{n}+o(T_{f}^{n}(r))\nonumber\\
        &=&\theta T_{f}^{n}(r)+o(T_{f}^{n}(r)).
    \end{eqnarray}
    Thus the assertion follows by combining (\ref{Pn}), (\ref{Pn1}) and (\ref{Pn2}).
\end{proof}

Now we give several examples to show that the growth assumption (\ref{5f1}) can not be dropped and the lower and upper bounds in (\ref{5f2}) are both sharp.

\begin{exa}
    Let $f=[0: 2x]$ and $\mathcal{P}(x)=x_{0}^{2}+2x_{1}^{2}$. Then $T_{f}(r)=r$ does not satisfy the growth assumption (\ref{5f1}). Since $\mathcal{P}\circ f=8x^{2}$, then $\sum_{j=1}^{2}N^{(j)}(r, 1_{0}\oslash (\mathcal{P}\circ f))=8r^{2}$ does not satisfy the inequality (\ref{5f2}) either.
\end{exa}

\begin{exa}
    Let 
    \begin{eqnarray*}
        f_{1}(x)=\left\{\begin{array}{ll}
         0, & x\leq 1,\\
         (2k-1)x-(2k^{2}-1), & x\in [2k-1, 2k+1),
         \end{array}\right.
    \end{eqnarray*}
    and
    \begin{eqnarray*}
         f_{2}(x)=\left\{\begin{array}{ll}
         0, & x\leq 2,\\
         2kx-2k(k+1), & x\in [2k, 2k+2),
         \end{array}\right.
    \end{eqnarray*}
    where $k\in\mathbb{N}.$
    
    (1) Let $g=[0: f_{1}(x)]$ and $\mathcal{P}_{1}(x)=x_{0}+x_{1}$, then we can easily get that $T_{g}(r)=N(r, 1_{0}\oslash (\mathcal{P}_{1}\circ g))=\frac{1}{4}r^{2}+o(r^{2})$, which shows the lower bound can be obtained.

    (2) Let $h=[f_{1}(x): f_{2}(x)]$ and $\mathcal{P}_{2}(x)=x_{0}+x_{1}$, then we can also get that $T_{h}(r)=\frac{1}{4}r^{2}+o(r^{2})$ and $N(r, 1_{0}\oslash (\mathcal{P}_{2}\circ h))=T(r, f_{1}+f_{2})+O(1)=m(r, f_{1}+f_{2})+O(1)=\frac{1}{2}r^{2}+o(r^{2})$, which shows the upper bound can be obtained.
\end{exa}

\section{A relationship about tropical truncated second main theorem}

In \cite{Pro}, the first author and Tohge considered an example of reduction in the term $\sum_{i=0}^{m}N(r, 1_{0}\oslash f_{i})-N(r, 1_{0}\oslash C_{0}(f_{0}, \cdots, f_{m}))$ where $f_{i}$ are classical tropical entire functions, but provided no general statement of the phenomenon of reduction. Now we first explore the case of non-constant tropical holomorphic curve $f$ with a reduced representation $\textbf{f}=(f_{0}, \dots, f_{m})$ satisfying the properties that all $f_{i}$ have finitely many roots, and we will follow the classical notations $\omega_{f}(x)=\omega_{f}^{(1)}(x)$ and $N(r, f)=N^{(1)}(r, f)$. For any fixed $i=0, \cdots, m$, we suppose $\{x_{i, t}\}_{t=1}^{\nu_{i}}$ are all the roots of $f_{i}$, where $\nu_{i}\in\mathbb{N}$, and let $x_{i}^{*}:=\max\limits_{1\leq t\leq \nu_{i}}\{|x_{i, t}|\}$. Then there exist finite constants $b_{i}^{-}, b_{i}^{+}, k_{i}^{-}, k_{i}^{+}\in\mathbb{R}$ and $k_{i}^{-}\leq k_{i}^{+}$ such that
\begin{eqnarray*}
         f_{i}(x)=\left\{\begin{array}{ll}
         k_{i}^{-}x+b_{i}^{-}, & x< -x_{i}^{*},\\
         k_{i}^{+}x+b_{i}^{+}, & x> x_{i}^{*}.
         \end{array}\right.
    \end{eqnarray*}
This yields $\sum_{t=1}^{\nu_{i}}\omega_{f_{i}}(x_{i, t})=k_{i}^{+}-k_{i}^{-}\geq 0$, and then by making use of the definition $N(r, f_{i})=\frac{1}{2}\sum_{t=1}^{\nu_{i}}\omega_{f_{i}}(x_{i, t})(r-|x_{i, t}|)$, we have
\begin{eqnarray*}
    \frac{k_{i}^{+}-k_{i}^{-}}{2}r+O(1)&=&\frac{1}{2}(r-|x_{i}^{*}|)\sum_{t=1}^{\nu_{i}}\omega_{f_{i}}(x_{i, t})\leq N(r, f_{i})\\
    &\leq& \frac{1}{2}r\sum_{t=1}^{\nu_{i}}\omega_{f_{i}}(x_{i, t})=\frac{k_{i}^{+}-k_{i}^{-}}{2}r,
\end{eqnarray*}
as $r$ tends to infinity. Hence
\begin{eqnarray}\label{6a111}
    \sum_{i=0}^{m}N(r, f_{i})=\left(\sum_{i=0}^{m}\frac{k_{i}^{+}-k_{i}^{-}}{2}\right)r+O(1),
\end{eqnarray}
where $\sum_{i=0}^{m}\frac{k_{i}^{+}-k_{i}^{-}}{2}>0$ because $f$ is non-constant. Moreover, let $\{\pi_{0}(x), \cdots, \pi_{m}(x)\}$ be a permutation of $\{0, \cdots, m\}$ depending on $x$ such that $C_{0}(f_{0}, \cdots, f_{m})(x)=\sum_{i=0}^{m}\overline{f}_{i}^{[\pi(i)(x)]}(x)$. Then there exist two finite constants $b^{-}$ and $b^{+}$ and a large enough $R>0$ such that  
\begin{eqnarray*}
         C_{0}(f_{0}, \cdots, f_{m})(x)=\left\{\begin{array}{ll}
         \sum_{i=0}^{m}k_{i}^{-}x+b^{-}, & x< -R,\\
         \sum_{i=0}^{m}k_{i}^{+}x+b^{+}, & x> R.
         \end{array}\right.
    \end{eqnarray*}
Then by following a similar analysis as with $N(r, f_{i})$ above, we obtain
\begin{eqnarray}\label{6a222}
    N(r, C_{0}(f_{0}, \cdots, f_{m}))=\frac{\sum_{i=0}^{m}k_{i}^{+}-\sum_{i=0}^{m}k_{i}^{-}}{2}r+O(1),
\end{eqnarray}
as $r$ tends to infinity. Thus, $N(r, C_{0}(f_{0}, \cdots, f_{m}))=(\sum_{i=0}^{m}N(r, f_{i}))(1+o(1))$ follows by combining (\ref{6a111}) and $(\ref{6a222})$.

For more general cases (we mean $f_{i}$ are allowed to have infinitely many roots), we give below a theorem to show the relationship with a growth assumption in the $n$-th setting, which is also the second part of the previous tropical second main theorem when $n=1$.

Before giving the relationship, let's recall \cite[Lemma 5.1]{Pro}. We have the same conclusion in our setting and the proof is also exactly same, so we omit the details here.
\begin{lemma}
    (\cite[Lemma 5.1]{Pro}) If $f_{0}, \cdots, f_{m}$ and $g$ are tropical entire functions, then
    \begin{eqnarray*}
        C_{0}(f_{0}\otimes g, \cdots, f_{m}\otimes g)=g\otimes \overline{g}\cdots\otimes \overline{g}^{[m]}\otimes C_{0}(f_{0}, \cdots, f_{m}).
    \end{eqnarray*}
\end{lemma}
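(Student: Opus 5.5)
The plan is to prove the identity by unwinding the definition of the tropical Casoratian and using only the fact that shifts and tropical multiplication commute in the right way. We take the definition from \cite{Pro}: writing $\overline{h}^{[k]}(x)=h(x+kc)$ for the $k$-th shift, we have
\begin{equation*}
C_{0}(f_{0},\cdots,f_{m})(x)=\max_{\pi}\sum_{i=0}^{m}\overline{f}_{i}^{[\pi(i)]}(x),
\end{equation*}
where the maximum runs over all permutations $\pi$ of $\{0,\cdots,m\}$. This is the tropical permanent of the matrix whose rows are $(f_{i},\overline{f}_{i},\cdots,\overline{f}_{i}^{[m]})$. Nothing in this definition uses linearity of the pieces, so the argument applies verbatim to $n$-th tropical entire functions.

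The first step is to replace each $f_{i}$ by $f_{i}\otimes g=f_{i}+g$. The $k$-th shift of this product is $\overline{f}_{i}^{[k]}+\overline{g}^{[k]}$, because shifting acts pointwise on sums. So for a fixed permutation $\pi$ we get
\begin{equation*}
\sum_{i=0}^{m}\left(\overline{f}_{i}^{[\pi(i)]}+\overline{g}^{[\pi(i)]}\right)=\sum_{i=0}^{m}\overline{f}_{i}^{[\pi(i)]}+\sum_{k=0}^{m}\overline{g}^{[k]}.
\end{equation*}
This holds because $\pi$ is a bijection onto $\{0,\cdots,m\}$, so the shifts of $g$ run exactly once over $k=0,\cdots,m$, whatever $\pi$ is.

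The second step is to take the maximum over $\pi$. The term $\sum_{k}\overline{g}^{[k]}$ does not depend on $\pi$, so it factors out of the max. This gives $C_{0}(f_{0}\otimes g,\cdots)=g\otimes\overline{g}\otimes\cdots\otimes\overline{g}^{[m]}\otimes C_{0}(f_{0},\cdots,f_{m})$ pointwise at every $x\in\mathbb{R}$, which is the claim.

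The only point needing care is making sure the definition of $C_{0}$ in this setting really is the permanent over all permutations, with the same shift $c$ throughout, and that the identity is meant pointwise rather than up to reduced representation. Once that is fixed there is no real obstacle. Entireness of $f_{i}$ and $g$ plays no role in the algebra; it matters only so that both sides are again $n$-th tropical entire functions, to which the later counting-function arguments apply.
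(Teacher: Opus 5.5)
Your argument is correct and is the same direct computation as the cited Lemma 5.1 of Korhonen--Tohge, to which the paper defers: the shift distributes over $\otimes$, the bijectivity of $\pi$ makes $\sum_{k=0}^{m}\overline{g}^{[k]}$ independent of $\pi$, and this common term factors out of the maximum. One correction to your closing remark: for $n\geq 2$, shifts and tropical sums of $n$-th tropical entire functions need not be entire (Example 2.1, and the example after Theorem 6.2, where $C_{0}(f_{0},f_{1})$ has a $2$-nd pole at $x=-1$), so entireness does not make both sides $n$-th tropical entire; this is harmless, because the identity holds pointwise for arbitrary real-valued functions.
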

We refer to \cite{Rbook} and \cite{Pro} for more details about the tropical Casoratian.

\begin{theorem}\label{theorem6.2}
    Let $n, m\in\mathbb{N}$ and $f:\mathbb{R}\rightarrow \mathbb{T}\mathbb{P}^{m}$ be a $n$-th tropical holomorphic curve in $\mathbb{T}\mathbb{P}^{m}$ with a reduced representation $\textbf{f}=(f_{0}, \cdots, f_{m})$. If there exists a constant $l\in\{0, \cdots, m\}$ such that $f_{i}\oslash f_{l}$ is well defined for all $i=0, \cdots, m$,
    \begin{eqnarray}\label{6f1}
        \limsup\limits_{r\rightarrow +\infty}\frac{\log\log T_{f}(r)}{\log r}:=\rho_{2}<1,
    \end{eqnarray}
    and $\sigma\in(0, 1-\rho_{2})$, then
    \begin{eqnarray}\label{6f2}
        &&\left|\sum_{i=0}^{m}\sum_{j=1}^{n}N^{(j)}\left(r, \frac{1_{0}}{f_{i}}\oslash\right)-\sum_{j=1}^{n}N^{(j)}\left(r, \frac{1_{0}}{C_{0}(f_{0}, \cdots, f_{m})}\oslash\right)\right|\nonumber\\
        &\leq&\sum_{j=1}^{n}\left|N^{(j)}(r, C_{0}(f_{0}, \cdots, f_{m}))-\sum_{i=0}^{m}N^{(j)}(r, \overline{f}_{i}^{[i]})\right|\nonumber\\
        &&+\sum_{i=0}^{m}\sum_{j=1}^{\lfloor \frac{n}{2}\rfloor}\left|N_{[-2i, 0]}^{(2j)}\left(r, \frac{1_{0}}{\overline{f}_{i}^{[i]}}\oslash\right)-N_{[-i, i]}^{(2j)}\left(r, \frac{1_{0}}{f_{i}}\oslash\right)\right|+o\left(\frac{T_{f}(r)}{r^{\sigma}}\right),
    \end{eqnarray}
    where $r$ approaches infinity outside of a set of finite logarithmic measure.
\end{theorem}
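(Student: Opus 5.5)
Apply the $n$-th Jensen formula to each of the $n$-th tropical meromorphic functions $C_{0}=C_{0}(f_{0},\dots,f_{m})$, $f_{i}$ and $\overline{f}_{i}^{[i]}$, and compare the resulting pole/zero counts.

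\textbf{Step 1: Jensen for $C_{0}$.} The Jensen formula gives
\begin{eqnarray*}
\sum_{j=1}^{n}N^{(j)}\left(r,\frac{1_{0}}{C_{0}}\oslash\right)-\sum_{j=1}^{n}N^{(j)}(r,C_{0})=\frac{1}{2}\left(C_{0}(r)+C_{0}(-r)\right)+O(1).
\end{eqnarray*}

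\textbf{Step 2: reduction to the shifted summands.} Since $C_{0}(x)=\sum_{i}\overline{f}_{i}^{[\pi_i(x)]}(x)$ is a tropical determinant (a maximum over permutations), I will compare $C_{0}(\pm r)$ with $\sum_{i}\overline{f}_{i}^{[i]}(\pm r)$.

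First normalize by the distinguished index $l$. By the Lemma quoted from \cite[Lemma 5.1]{Pro},
\begin{eqnarray*}
C_{0}(f_0,\dots,f_m)=f_l\otimes\overline{f}_l\otimes\cdots\otimes\overline{f}_l^{[m]}\otimes C_0(f_0\oslash f_l,\dots,f_m\oslash f_l).
\end{eqnarray*}
Each $g_i=f_i\oslash f_l$ is well defined. By Proposition~\ref{p5.4}, $T(r,g_i)\le T_f(r)+O(1)$, so each $g_i$ inherits the hyper-order bound (\ref{6f1}).

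Theorem~\ref{theorem 4.6} then gives, pointwise and outside a set of finite logarithmic measure,
\begin{eqnarray*}
\bigl|g_i(\pm r+k)-g_i(\pm r)\bigr|=o\left(\frac{T_f(r)}{r^\sigma}\right)\quad\text{for } k\le m.
\end{eqnarray*}
Telescoping in $k$ and taking the union of finitely many exceptional sets, every permutation term $\sum_i\overline{g}_i^{[\pi(i)]}(\pm r)$ differs from $\sum_i g_i(\pm r)$ by $o(T_f(r)/r^{\sigma})$. Hence
\begin{eqnarray*}
C_0(\pm r)=\sum_{i}\overline{f}_i^{[i]}(\pm r)+o\left(\frac{T_f(r)}{r^\sigma}\right).
\end{eqnarray*}
The $f_l$ shift factors cancel, because both sides contain the same product $f_l\otimes\cdots\otimes\overline{f}_l^{[m]}$ up to the shifts $i$.

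\textbf{Step 3: Jensen for the shifted functions.} Apply the Jensen formula to each $\overline{f}_i^{[i]}(x)=f_i(x+i)$:
\begin{eqnarray*}
\frac{1}{2}\left(\overline{f}_i^{[i]}(r)+\overline{f}_i^{[i]}(-r)\right)=\sum_{j}\left(N^{(j)}\left(r,\frac{1_0}{\overline{f}_i^{[i]}}\oslash\right)-N^{(j)}\left(r,\overline{f}_i^{[i]}\right)\right)+O(1).
\end{eqnarray*}
Substituting this into Step 1 and taking absolute values produces the first term on the right of (\ref{6f2}), namely
\begin{eqnarray*}
\sum_j\Bigl|N^{(j)}(r,C_0)-\sum_i N^{(j)}\bigl(r,\overline{f}_i^{[i]}\bigr)\Bigr|.
\end{eqnarray*}
What remains is to compare $N^{(j)}\bigl(r,1_0\oslash\overline{f}_i^{[i]}\bigr)$ with $N^{(j)}(r,1_0\oslash f_i)$.

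\textbf{Step 4 (main obstacle): effect of a shift on $j$-th roots.} The singularities of $\overline{f}_i^{[i]}$ are those of $f_i$ moved by $-i$, with multiplicity given by $\tau^{(j)}$. For points not crossing the origin, the sign factor $sgn^{j+1}$ is unchanged for odd $j$. For even $j$, however, $\omega^{(j)}=sgn^{j+1}\cdot\tau$ changes when a point crosses $0$. The origin also carries its own even-order contribution $f^{(j)}(0^+)+f^{(j)}(0^-)$.

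Away from the window $[-2i,i]$, the roots match one-to-one. Their weights change from $(r-|x|)^j$ to $(r-|x-i|)^j$, and I will bound that change by $O(r^{j-1})$ times the multiplicity. Together with the contribution near $r$, this is absorbed into $o(T_f(r)/r^\sigma)$; I intend to justify this by comparing $N^{(j)}(r+i)$ with $N^{(j)}(r)$ via Lemma~\ref{lemma4.5}.

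The only genuine mismatch is for even $2j$ near the origin. The roots of $\overline{f}_i^{[i]}$ in $[-2i,0]$ do not correspond to the roots of $f_i$ in $[-i,i]$. I bound this mismatch crudely by the difference of the two partial counting functions, which is the second term in (\ref{6f2}). Making this bookkeeping precise, including the sign flips and the extra zero-point terms, is where I expect the most care to be needed.
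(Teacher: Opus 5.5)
Your proposal is correct and follows the paper's proof essentially step for step. You normalize by $f_l$ with the factorization lemma for $C_0$ and apply Theorem~\ref{theorem 4.6} with Proposition~\ref{p5.4} to get $C_0(\pm r)=\sum_i\overline{f}_i^{[i]}(\pm r)+o(T_f(r)/r^{\sigma})$. You then use Jensen for $C_0$ and each $\overline{f}_i^{[i]}$ with the triangle inequality. Finally, in the shift comparison of $j$-th roots, only the even-order windows $[-2i,0]$ versus $[-i,i]$ fail to match, and everything else is absorbed by Lemma~\ref{lemma4.5}.

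The one ingredient you leave implicit, and which the paper states, is why Lemma~\ref{lemma4.5} applies to $N^{(j)}(r,1_0\oslash f_i)$ with error $o(T_f(r)/r^{\sigma})$. The reason is that the Jensen formula for the entire $f_i$ gives $N^{(j)}(r,1_0\oslash f_i)\le T_f(r)+O(1)$, so its hyper-order is at most $\rho_2$.
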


\begin{proof}
    We first claim that
    \begin{eqnarray}\label{equa1}
    &&\sum_{j=1}^{n}\left|N^{(j)}\left(r, \frac{1_{0}}{\overline{f}_{i}^{[k]}}\oslash\right)-N^{(j)}\left(r, \frac{1_{0}}{f_{i}}\oslash\right)\right|\nonumber\\
    &=& \sum_{j=1}^{\lfloor \frac{n}{2}\rfloor}\left|N_{[-2k, 0]}^{(2j)}\left(r, \frac{1_{0}}{\overline{f}_{i}^{[k]}}\oslash\right)-N_{[-k, k]}^{(2j)}\left(r, \frac{1_{0}}{f_{i}}\oslash\right)\right| +o\left(\frac{T_{f}(r)}{r^{\sigma}}\right)
    \end{eqnarray}
    for all $j=1, \cdots, n$ and $i, k=0, \cdots, m$ as $r$ tends to infinity outside of a set of finite logarithmic measure. For any fixed $i, j, k$ and $r>2k$, we define the set of all $j$-th roots of $f_{i}$ in $(-r-k, r+k)$ as
    \begin{eqnarray*}
        \{x_{1,s}\}_{s=1}^{\mu_{1}}&\subset& [-k, k], \\
        \{x_{2,s}\}_{s=1}^{\mu_{2}}&\subset& (-r+k, -k),\\
        \{x_{3,s}\}_{s=1}^{\mu_{3}}&\subset& (k, r-k),\\
        \{x_{4,s}\}_{s=1}^{\mu_{4}}&\subset& (-r, -r+k],\\
        \{x_{5,s}\}_{s=1}^{\mu_{5}}&\subset& [r-k, r),\\
        \{x_{6,s}\}_{s=1}^{\mu_{6}}&\subset& (-r-k, -r],\\
        \{x_{7,s}\}_{s=1}^{\mu_{7}}&\subset& [r, r+k).
    \end{eqnarray*}
    Since $\left(\overline{g}^{[k]}\right)^{(j)}(x-k)=\frac{d^{j}g(t+k)}{dt^{j}}|_{t=x-k}=g^{(j)}(t+k)|_{t=x-k}=g^{(j)}(x)$ for any polynomial $g$ and $x\in\mathbb{R}$, it follows that
    \begin{eqnarray*}
        \omega_{\overline{f}_{i}^{[k]}}^{(j)}(x-k)=\frac{sgn^{j+1}((x-k)^{+})f_{i}^{(j)}(x^{+})-sgn^{j+1}((x-k)^{-})f_{i}^{(j)}(x^{-})}{j!}
    \end{eqnarray*}
    where $x\in\mathbb{R}$. Thus $\omega_{\overline{f}_{i}^{[k]}}^{(j)}(x-k)=\omega_{f_{i}}^{(j)}(x)$ when $|x|>k$ or $j$ is an odd number, while this remains uncertain when $|x|\leq k$ and $j$ is an even number. Hence, we can define all $\overline{f}_{i}^{[k]}$'s $j$-th roots in $(-r-2k, r)$ as
    \begin{eqnarray*}
        \{x_{1,s}^{'}\}_{s=1}^{\mu_{1}^{'}}&\subset& [-2k, 0], \\
        \{x_{2,s}-k\}_{s=1}^{\mu_{2}}&\subset& (-r, -2k),\\
        \{x_{3,s}-k\}_{s=1}^{\mu_{3}}&\subset& (0, r-2k),\\
        \{x_{4,s}-k\}_{s=1}^{\mu_{4}}&\subset& (-r-k, -r],\\
        \{x_{5,s}-k\}_{s=1}^{\mu_{5}}&\subset& [r-2k, r-k),\\
        \{x_{6,s}-k\}_{s=1}^{\mu_{6}}&\subset& (-r-2k, -r-k],\\
        \{x_{7,s}-k\}_{s=1}^{\mu_{7}}&\subset& [r-k, r).
    \end{eqnarray*}
    $\{x_{1,s}\}$ and $\{x_{1,s}^{'}\}$ may not be one-to-one when $j$ is an even number, but $\{x_{t,s}\}$ and $\{x_{t,s}-k\}$ should be one-to-one and the identity $\omega_{f}^{(j)}(x_{t,s})=\omega_{\overline{f}_{i}^{[k]}}^{(j)}(x_{t,s}-k)$ holds for all $t=2, \cdots, 7$. Furthermore, if $j$ is an even number, we have
    \begin{eqnarray}
        N^{(j)}\left(r-k, \frac{1_{0}}{f_{i}}\oslash\right)
        &=&\sum_{s=1}^{\mu_{2}}\omega_{f}^{(j)}(x_{2,s})(r-k+x_{2, s})^{j}\nonumber\\
        &&+\sum_{s=1}^{\mu_{3}}\omega_{f}^{(j)}(x_{3,s})(r-k-x_{3,s})^{j}\nonumber\\
        &&+N_{[-k, k]}^{(j)}\left(r-k, \frac{1_{0}}{f_{i}}\oslash\right),\label{6aaa1}
        \end{eqnarray}
        \begin{eqnarray}
        N^{(j)}\left(r+k, \frac{1_{0}}{f_{i}}\oslash\right)&=&\sum_{t=1}^{3}\sum_{s=1}^{\mu_{2t}}\omega_{f}^{(j)}(x_{2t,s})(r+k+x_{2t,s})^{j}\nonumber\\
        &&+\sum_{t=1}^{3}\sum_{s=1}^{\mu_{2t+1}}\omega_{f}^{(j)}(x_{2t+1,s})(r+k-x_{2t+1, s})^{j}\nonumber\\
        &&+N_{[-k, k]}^{(j)}\left(r+k, \frac{1_{0}}{f_{i}}\oslash\right)\label{6aaa2},
        \end{eqnarray}
    and
    \begin{eqnarray}
        N^{(j)}\left(r, \frac{1_{0}}{\overline{f}_{i}^{[k]}}\oslash\right)&=&\sum_{s=1}^{\mu_{2}}\omega_{\overline{f}_{i}^{[k]}}^{(j)}(x_{2,s}-k)(r+x_{2,s}-k)^{j}\nonumber\\
        &&+\sum_{t=1}^{3}\sum_{s=1}^{\mu_{2t+1}}\omega_{\overline{f}_{i}^{[k]}}^{(j)}(x_{2t+1,s}-k)(r-x_{2t+1, s}+k)^{j}\nonumber\\
        &&+N_{[-2k, 0]}^{(j)}\left(r, \frac{1_{0}}{\overline{f}_{i}^{[k]}}\oslash\right).\label{6aaa3}
    \end{eqnarray}
    Since $k$ is a finite number and there is no finite accumulation point of $f_{i}$'s singularities, then we have
    \begin{eqnarray*}
        N_{[-k, k]}^{(j)}\left(r\pm k, \frac{1_{0}}{f_{i}}\oslash\right)&=&
        N_{[-k, k]}^{(j)}\left(r, \frac{1_{0}}{f_{i}}\oslash\right)(1+O(1/r))\\
        &=&N_{[-k, k]}^{(j)}\left(r, \frac{1_{0}}{f_{i}}\oslash\right)+o\left(\frac{T(r, f)}{r^{\sigma}}\right),
    \end{eqnarray*}
    as $r$ tends to infinity. By comparing the terms in (\ref{6aaa1}), (\ref{6aaa2}) and (\ref{6aaa3}), we obtain
    \begin{eqnarray}\label{NN1}
        &&N^{(j)}\left(r-k, \frac{1_{0}}{f_{i}}\oslash\right)\nonumber\\
        &\leq& N^{(j)}\left(r, \frac{1_{0}}{\overline{f}_{i}^{[k]}}\oslash\right)+N_{[-k, k]}^{(j)}\left(r, \frac{1_{0}}{f_{i}}\oslash\right)-N_{[-2k, 0]}^{(j)}\left(r, \frac{1_{0}}{\overline{f}_{i}^{[k]}}\oslash\right)+o\left(\frac{T(r, f)}{r^{\sigma}}\right)\nonumber\\
        &\leq& N^{(j)}\left(r+k, \frac{1_{0}}{f_{i}}\oslash\right).
    \end{eqnarray}
    When $j$ is odd, then $\{x_{1,s}\}$ and $\{x_{1,s}^{'}\}$ are also one-to-one and $\omega_{f}^{(j)}(x_{1,s})=\omega_{\overline{f}_{i}^{[k]}}^{(j)}(x_{1,s}^{'})$ in this case, thus
    \begin{eqnarray}\label{NN2}
        N^{(j)}\left(r-k, \frac{1_{0}}{f_{i}}\oslash\right)\leq N^{(j)}\left(r, \frac{1_{0}}{\overline{f}_{i}^{[k]}}\oslash\right)\leq N^{(j)}\left(r+k, \frac{1_{0}}{f_{i}}\oslash\right).
    \end{eqnarray}
    
    Because $f_{i}$ is $n$-th entire, then from the $n$-th Jensen formula,
    \begin{eqnarray*}
        \sum_{j=1}^{n}N^{(j)}\left(r, \frac{1_{0}}{f_{i}}\oslash\right)&=&m(r, f_{i})-m\left(r, \frac{1_{0}}{f_{i}}\oslash\right)+O(1)\\
        &=&\frac{1}{2}(f_{i}(r)+f_{i}(-r))+O(1)\\
        &\leq&T_{f}(r)+O(1).
    \end{eqnarray*}
    Thus $N^{(j)}\left(r, \frac{1_{0}}{f_{i}}\oslash\right)\leq T_{f}(r)+O(1)$ for all $j=1, \cdots, n$, and then (\ref{6f1}) implies that  
    \begin{eqnarray*}
        \limsup\limits_{r\rightarrow +\infty}\frac{\log\log N^{(j)}\left(r, \frac{1_{0}}{f_{i}}\oslash\right)}{\log r}:=\rho_{2, j}\leq \rho_{2}<1.
    \end{eqnarray*}
    Hence $\sigma\in(0, 1-\rho_{2, j})$, and then from Lemma \ref{lemma4.5}, we have
    \begin{eqnarray}\label{6aa1}
        N^{(j)}\left(r\pm k, \frac{1_{0}}{f_{i}}\oslash\right)&=&N^{(j)}\left(r, \frac{1_{0}}{f_{i}}\oslash\right)+o\left(\frac{N^{(j)}(r, 1_{0}\oslash f_{i})}{r^{\sigma}}\right)\nonumber\\
        &=&N^{(j)}\left(r, \frac{1_{0}}{f_{i}}\oslash\right)+o\left(\frac{T_{f}(r)}{r^{\sigma}}\right)
    \end{eqnarray}
    as $r$ tends to infinity outside of a set of finite logarithmic measure, which proves the claim by combing (\ref{NN1}), (\ref{NN2}), (\ref{6aa1}) and adding all $j$.

    Furthermore, for any $i=0, \cdots, m$, inequality (\ref{div}) implies
    \begin{eqnarray*}
        \limsup\limits_{r\rightarrow +\infty}\frac{\log\log T\left(r,\frac{f_{i}}{f_{l}}\oslash\right)}{\log r}:=\rho_{2}^{'}\leq \rho_{2}<1.
    \end{eqnarray*}
    This yields $\sigma\in(0, 1-\rho_{2}^{'})$. Since $f_{i}\oslash f_{l}$ is well defined, then by applying Theorem \ref{theorem 4.6} and inequality (\ref{div}) again, we have
    \begin{eqnarray*}
        &&\left|\left(\overline{\frac{f_{i}}{f_{l}}\oslash}\right)^{[k_{1}]}\oslash \left(\overline{\frac{f_{i}}{f_{l}}\oslash}\right)^{[k_{2}]}(\delta r)\right|\\&\leq&\left|\left(\overline{\frac{f_{i}}{f_{l}}\oslash}\right)^{[k_{1}]}\oslash \left(\frac{f_{i}}{f_{l}}\oslash\right)(\delta r)\right|+\left|\left(\overline{\frac{f_{i}}{f_{l}}\oslash}\right)^{[k_{2}]}\oslash \left(\frac{f_{i}}{f_{l}}\oslash\right)(\delta r)\right|\\
        &=& o\left(\frac{T(r, f_{i}\oslash f_{l})}{r^{\sigma}}\right)=o\left(\frac{T_{f}(r)}{r^{\sigma}}\right)
    \end{eqnarray*}
    for all $k_{1}, k_{2}\in\{0, \cdots, m\}$ and $\delta=\pm 1$ as $r$ tends to infinity outside of a set of finite logarithmic measure. Therefore
    \begin{eqnarray}\label{equa2}
        &&\left|\frac{1}{2}\sum_{\delta=\pm 1}\left(C_{0}(f_{0}, \cdots, f_{m})(\delta r)-\sum_{i=0}^{m}\overline{f}_{i}^{[i]}(\delta r)\right)\right|\nonumber\\
        &=&\frac{1}{2}\sum_{\delta=\pm 1}\left|C_{0}\left(\frac{f_{0}}{f_{l}}\oslash, \cdots ,\frac{f_{m}}{f_{l}}\oslash\right)(\delta r)-\left(\frac{f_{0}}{f_{l}}\oslash+\cdots+\frac{\overline{f}_{m}^{[m]}}{\overline{f}_{l}^{[m]}}\oslash\right) (\delta r)\right|\nonumber\\
        &\leq&\frac{1}{2}\sum_{\delta=\pm 1}\left|\sum_{i=0}^{m}\left(\overline{\frac{f_{i}}{f_{l}}\oslash}\right)^{[\pi_{i}(\delta r)]}\oslash \left(\overline{\frac{f_{i}}{f_{l}}\oslash}\right)^{[i]}
   (\delta r)\right|\nonumber\\
        &\leq&\frac{1}{2}\sum_{\delta=\pm 1}\sum_{i=0}^{m}\left|\left(\overline{\frac{f_{i}}{f_{l}}\oslash}\right)^{[\pi_{i}(\delta r)]}\oslash \left(\overline{\frac{f_{i}}{f_{l}}\oslash}\right)^{[i]}(\delta r)\right|=o\left(\frac{T_{f}(r)}{r^{\sigma}}\right)
    \end{eqnarray}
    as $r$ tends to infinity outside of a set of finite logarithmic measure, where $\{\pi_{0}(\delta r), \cdots, \pi_{m}(\delta r)\}$ is a permutation of $\{0, \cdots, m\}$ depending on $\delta r$ such that $C_{0}(f_{0}, \cdots, f_{m})(\delta r)=\sum_{i=0}^{m}\overline{f}_{i}^{[\pi_{i}(\delta r)]}(\delta r)$.

By applying the $n$-th Jensen Formula again, we have
\begin{eqnarray}\label{equa3}
        &&\left|\sum_{j=1}^{n}N^{(j)}\left(r, \frac{1_{0}}{C_{0}(f_{0}, \cdots, f_{m})}\oslash\right)-\sum_{i=0}^{m}\sum_{j=1}^{n}N^{(j)}\left(r, \frac{1_{0}}{f_{i}}\oslash\right)\right|\nonumber\\
        &\leq&\sum_{i=0}^{m}\sum_{j=1}^{n}\left|N^{(j)}\left(r, \frac{1_{0}}{\overline{f}_{i}^{[i]}}\oslash\right)-N^{(j)}\left(r, \frac{1_{0}}{f_{i}}\oslash\right)\right|\nonumber\\
        &&+\left|\sum_{j=1}^{n}N^{(j)}\left(r, \frac{1_{0}}{C_{0}(f_{0}, \cdots, f_{m})}\oslash\right)-\sum_{i=0}^{m}\sum_{j=1}^{n}N^{(j)}\left(r, \frac{1_{0}}{\overline{f}_{i}^{[i]}}\oslash\right)\right|\nonumber\\
        &\leq&\sum_{i=0}^{m}\sum_{j=1}^{n}\left|N^{(j)}\left(r, \frac{1_{0}}{\overline{f}_{i}^{[i]}}\oslash\right)-N^{(j)}\left(r, \frac{1_{0}}{f_{i}}\oslash\right)\right|\nonumber\\
        &&+\left|\sum_{j=1}^{n}N^{(j)}\left(r, C_{0}(f_{0}, \cdots, f_{m})\right)-\sum_{i=0}^{m}\sum_{j=1}^{n}N^{(j)}\left(r, \overline{f}_{i}^{[i]}\right)\right|\nonumber\\
        &&+\left|\frac{1}{2}\sum_{\delta=\pm 1}\left(C_{0}(f_{0}, \cdots, f_{m})(\delta r)-\sum_{i=0}^{m}\overline{f}_{i}^{[i]}(\delta r)\right)\right|+O(1).
    \end{eqnarray}
    Hence the assertion follows by combining (\ref{equa1}), (\ref{equa2}) and (\ref{equa3}).
\end{proof}

Unlike in the case $n=1$, when $n\geq 2$, even if $f_{0}, \cdots, f_{m}$ are all tropical entire, we can not guarantee that $\frac{1_{0}}{\overline{f}_{i}^{[k]}}\oslash$ and $C_{0}(f_{0}, \cdots, f_{m})$ are tropical entire. Hence the right hand side of (\ref{6f2}) can not be simplified as $o(T_{f}(r)/r^{\sigma})$ in the general $n$-th setting, as we can see from the following example.
\begin{exa}
    Define a $2$-nd tropical holomorphic well defined curve $\textbf{f}=[f_{0}: f_{1}]$ where
    \begin{eqnarray*}
        f_{0}(x)=\left\{\begin{array}{ll}
         -x^{2}, & x\leq 0,\\
         x^{2}, & x>0,
         \end{array}\right. \qquad f_{1}(x)=\left\{\begin{array}{ll}
         x^{2}, & x\leq 0,\\
         -x^{2}, & x>0.
         \end{array}\right.
    \end{eqnarray*}
    Then
    \begin{eqnarray*}
        \overline{f}_{0}(x)=\left\{\begin{array}{ll}
         -(x+1)^{2}, & x\leq -1,\\
         (x+1)^{2}, & x>-1,
         \end{array}\right. \qquad \overline{f}_{1}(x)=\left\{\begin{array}{ll}
         (x+1)^{2}, & x\leq -1,\\
         -(x+1)^{2}, & x>-1,
         \end{array}\right.
    \end{eqnarray*}
    and
    \begin{eqnarray*}
        C_{0}(f_{0}, f_{1})=\left\{\begin{array}{ll}
         -2x-1, & x\leq -1,\\
         2x^{2}+2x+1, & -1<x\leq 0,\\
         2x+1, & 0<x.
         \end{array}\right.
    \end{eqnarray*}
    By a simple calculation, we have $T_{f}(r)=r^{2}$, $\sum_{i=0}^{2}\sum_{j=1}^{2}N^{(j)}\left(r, \frac{1_{0}}{f_{i}}\oslash\right)=0$ and $\sum_{j=1}^{2}N^{(j)}(r, 1_{0}\oslash C_{0}(f_{0}, f_{1}))=r^{2}$ which do not satisfy the inequality (\ref{6f2}) if we replace the right hand side of (\ref{6f2}) by $o(T_{f}(r)/r^{\sigma})$.
\end{exa}

When $n=1$, it is clear that $f_{l_{1}}\oslash f_{l_{2}}$ is well defined for all $l_{1}, l_{2}=0,\cdots, m$ and the right hand side of (\ref{6f2}) is $o(T_{f}(r)\slash r^{\sigma})$, and then we have the usual relationship put forward in \cite{Pro}.

\begin{corollary}\label{cor6.3}
    Let $m\in\mathbb{N}$ and $f:\mathbb{R}\rightarrow \mathbb{T}\mathbb{R}^{m}$ be a $1$-st tropical holomorphic curve with a reduced representation $\textbf{f}=(f_{0}, \cdots, f_{m})$. If
    \begin{eqnarray}\label{6f3}
        \limsup\limits_{r\rightarrow +\infty}\frac{\log\log T_{f}(r)}{\log r}:=\rho_{2}<1,
    \end{eqnarray}
    and $\sigma\in(0, 1-\rho_{2})$, then
    \begin{eqnarray}\label{6f4}
        \sum_{i=0}^{m}N\left(r, \frac{1_{0}}{f_{i}}\oslash\right)=N\left(r, \frac{1_{0}}{C_{0}(f_{0}, \cdots, f_{m})}\oslash\right)+o\left(\frac{T_{f}(r)}{r^{\sigma}}\right)
    \end{eqnarray}
    where $r$ approaches infinity outside an exceptional set of finite logarithmic measure.
\end{corollary}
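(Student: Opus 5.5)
Corollary~\ref{cor6.3} is the special case $n=1$ of Theorem~\ref{theorem6.2}, so I will check that the hypotheses of that theorem hold and then show that every term on the right-hand side of (\ref{6f2}), except the $o(T_{f}(r)/r^{\sigma})$ term, vanishes when $n=1$.

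First, the hypotheses. When $n=1$, each quotient $f_{i}\oslash f_{l}$ is piecewise linear, and every linear function is well defined by Definition~\ref{def4.2}. So the condition that $f_{i}\oslash f_{l}$ is well defined holds for every choice of $l$; I take $l=0$. The growth assumption (\ref{6f3}) is exactly (\ref{6f1}). Theorem~\ref{theorem6.2} therefore applies, with $N^{(1)}=N$.

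Next, the second sum on the right of (\ref{6f2}). It runs over $j=1,\dots,\lfloor n/2\rfloor$, and $\lfloor 1/2\rfloor=0$, so it is empty.

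Then the first sum, $|N(r,C_{0}(f_{0},\dots,f_{m}))-\sum_{i}N(r,\overline{f}_{i}^{[i]})|$. I will show both counting functions are zero.
\begin{itemize}
\item Each $\overline{f}_{i}^{[i]}(x)=f_{i}(x+i)$ is a shift of a $1$-st tropical entire function. For $j=1$ the multiplicity satisfies $\omega_{\overline{f}_{i}^{[i]}}(x-i)=\omega_{f_{i}}(x)$, as noted in the proof of Theorem~\ref{theorem6.2}, since $sgn^{2}=1$. Hence $\overline{f}_{i}^{[i]}$ has no poles and $N(r,\overline{f}_{i}^{[i]})=0$.
\item The Casoratian $C_{0}(f_{0},\dots,f_{m})$ is a tropical sum, i.e.\ a maximum, of tropical products $\sum_{i}\overline{f}_{i}^{[\pi(i)]}$ over permutations $\pi$.
\item Each such product is a sum of convex piecewise linear functions, so it is convex.
\item A maximum of convex piecewise linear functions is again convex piecewise linear. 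Its first-order multiplicities $\omega^{(1)}$ are nonnegative, so it has no poles and its $N$ vanishes.
\end{itemize}
This is the classical fact that tropical entire functions are closed under $\oplus$ and $\otimes$. It holds in the $1$-st setting because there $\omega^{(1)}$ is simply the jump of the derivative and convexity is preserved; the failure of this for $n\ge2$ is what the preceding example illustrates.

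Combining these, (\ref{6f2}) reduces to
\[
\left|\sum_{i=0}^{m}N\left(r,\frac{1_{0}}{f_{i}}\oslash\right)-N\left(r,\frac{1_{0}}{C_{0}(f_{0},\dots,f_{m})}\oslash\right)\right|=o\left(\frac{T_{f}(r)}{r^{\sigma}}\right)
\]
outside an exceptional set of finite logarithmic measure, which is (\ref{6f4}). The only step that needs care is confirming that $C_{0}$ is tropical entire in this setting; the rest is direct bookkeeping.
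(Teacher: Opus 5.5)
Your proposal is correct and follows the same route as the paper. The paper obtains the corollary by specializing Theorem~\ref{theorem6.2} to $n=1$, noting that piecewise linear quotients $f_{i}\oslash f_{l}$ are automatically well defined, and observing that the right-hand side of (\ref{6f2}) collapses to $o(T_{f}(r)/r^{\sigma})$. Your explicit checks fill in what the paper calls ``clear'': the even-order sum is empty because $\lfloor 1/2\rfloor=0$, and $C_{0}(f_{0},\dots,f_{m})$ and the shifts $\overline{f}_{i}^{[i]}$ are convex, hence have no poles.
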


The growth condition can not be dropped, as we can see from the example that $f=[0:  e_{\alpha}(x)]$ for some $\alpha>1$, thus $T_{f}(r)=T(r, e_{\alpha})+O(1)$ does not satisfy the assumption (\ref{6f3}) from (ii) Proposition \ref{pro3.8}. Since $\overline{e}_{\alpha}(x)=\alpha e_{\alpha}(x)$, then $C_{0}(f_{0}, f_{1})=\alpha e_{\alpha}(x)$. Thus $N(r, 1_{0}\oslash f_{0})+N(r, 1_{0}\oslash f_{1})=T(r, e_{\alpha})(1+o(1))$ while $N(r, 1_{0}\oslash C_{0}(f_{0}, f_{1}))=\alpha T(r, e_{\alpha})(1+o(1))$, which means the equality (\ref{6f4}) can not hold in the case.

From the Corollary \ref{cor6.3}, we can see that if we only consider the concept of truncation for the multiplicity by means of the shift operator, there is in practice no room left for any meaningful ramification, and hence there may not be a natural truncated second theorem in the tropical setting in this context.

\end{document}